
\documentclass[12pt]{amsart}       

\usepackage{txfonts}
\usepackage{amssymb}
\usepackage{eucal}
\usepackage{graphicx}
\usepackage{graphicx, float, url}
\usepackage{amsmath}
\usepackage{amscd}
\usepackage[all]{xy}           
\usepackage{latexsym}
\usepackage{mathrsfs}
\usepackage{amsthm}
\usepackage{txfonts}
\usepackage{tikz}
\usepackage{amsfonts,latexsym}
\usepackage{xspace}
\usepackage{epsfig}
\usepackage{float}
\usepackage{color}
\usepackage{fancybox}
\usepackage{colordvi}
\usepackage{multicol}
\usepackage{colordvi}
\usepackage{enumerate}
\usepackage[active]{srcltx} 
\usepackage{tikz}
\usepackage{url}

\topmargin -.8cm \textheight 22.8cm \oddsidemargin 0cm \evensidemargin -0cm \textwidth 16.3cm

\newtheorem{theorem}{Theorem}[section]
\newtheorem{prop}[theorem]{Proposition}
\newtheorem{lemma}[theorem]{Lemma}

\theoremstyle{definition}
\newtheorem{defn}[theorem]{Definition}
\newtheorem{prop-def}{Proposition-Definition}[section]
\newtheorem{coro-def}{Corollary-Definition}[section]

\newtheorem{remark}[theorem]{Remark}

\newtheorem{exam}[theorem]{Example}

\newtheorem{conj}[theorem]{Conjecture}


\newcommand{\nc}{\newcommand}
\nc{\tred}[1]{\textcolor{red}{#1}}
\nc{\tblue}[1]{\textcolor{blue}{#1}}
\nc{\tgreen}[1]{\textcolor{green}{#1}}
\nc{\tpurple}[1]{\textcolor{purple}{#1}}
\nc{\btred}[1]{\textcolor{red}{\bf #1}}
\nc{\btblue}[1]{\textcolor{blue}{\bf #1}}
\nc{\btgreen}[1]{\textcolor{green}{\bf #1}}
\nc{\btpurple}[1]{\textcolor{purple}{\bf #1}}
\nc{\NN}{{\mathbb N}}
\nc{\ncsha}{{\mbox{\cyr X}^{\mathrm NC}}} \nc{\ncshao}{{\mbox{\cyr
X}^{\mathrm NC}_0}}


\newcommand{\efootnote}[1]{}

\renewcommand{\textbf}[1]{}

\newcommand{\delete}[1]{}

\nc{\mlabel}[1]{\label{#1}}  
\nc{\mcite}[1]{\cite{#1}}  
\nc{\mref}[1]{\ref{#1}}  
\nc{\mbibitem}[1]{\bibitem{#1}} 

\delete{
\nc{\mlabel}[1]{\label{#1}{\hfill \hspace{1cm}{\bf{{\ }\hfill(#1)}}}}
\nc{\mcite}[1]{\cite{#1}{{\bf{{\ }(#1)}}}}  
\nc{\mref}[1]{\ref{#1}{{\bf{{\ }(#1)}}}}  
\nc{\mbibitem}[1]{\bibitem[\bf #1]{#1}} 
}


\nc{\opa}{\ast} \nc{\opb}{\odot} \nc{\op}{\bullet} \nc{\pa}{\frakL}
\nc{\arr}{\rightarrow} \nc{\lu}[1]{(#1)} \nc{\mult}{\mrm{mult}}
\nc{\diff}{\mathfrak{Diff}}
\nc{\opc}{\sharp}\nc{\opd}{\natural}
\nc{\ope}{\circ}
\nc{\dpt}{\mathrm{d}}
\nc{\hck}{H_{RT}}
\nc{\vdf}{\calf}
\nc{\ldf}{\calf_\ell}
\nc{\hlf}{H_\ell}
\nc{\onek}{\mathbf{1}_\bfk}
\nc{\diam}{alternating\xspace}
\nc{\Diam}{Alternating\xspace}
\nc{\cdiam}{canonical alternating\xspace}
\nc{\Cdiam}{Canonical alternating\xspace}
\nc{\AW}{\mathcal{A}}

\nc{\ari}{\mathrm{ar}}

\nc{\lef}{\mathrm{lef}}

\nc{\Sh}{\mathrm{ST}}

\nc{\Cr}{\mathrm{Cr}}

\nc{\st}{{Schr\"oder tree}\xspace}
\nc{\sts}{{Schr\"oder trees}\xspace}

\nc{\vertset}{\Omega} 

\nc{\assop}{\quad \begin{picture}(5,5)(0,0)
\line(-1,1){10}
\put(-2.2,-2.2){$\bullet$}
\line(0,-1){10}\line(1,1){10}
\end{picture} \quad \smallskip}

\nc{\operator}{\begin{picture}(5,5)(0,0)
\line(0,-1){6}
\put(-2.6,-1.8){$\bullet$}
\line(0,1){9}
\end{picture}}

\nc{\idx}{\begin{picture}(6,6)(-3,-3)
\put(0,0){\line(0,1){6}}
\put(0,0){\line(0,-1){6}}
\end{picture}}

\nc{\pb}{{\mathrm{pb}}}
\nc{\Lf}{{\mathrm{Lf}}}

\nc{\lft}{{left tree}\xspace}
\nc{\lfts}{{left trees}\xspace}

\nc{\fat}{{fundamental averaging tree}\xspace}

\nc{\fats}{{fundamental averaging trees}\xspace}
\nc{\avt}{\mathrm{Avt}}

\nc{\rass}{{\mathit{RAss}}}

\nc{\aass}{{\mathit{AAss}}}

\nc{\vin}{{\mathrm Vin}}    
\nc{\lin}{{\mathrm Lin}}    
\nc{\inv}{\mathrm{I}n}
\nc{\gensp}{V} 
\nc{\genbas}{\mathcal{V}} 
\nc{\bvp}{V_P}     
\nc{\gop}{{\,\omega\,}}     

\nc{\bin}[2]{ (_{\stackrel{\scs{#1}}{\scs{#2}}})}  
\nc{\binc}[2]{ \left (\!\! \begin{array}{c} \scs{#1}\\
    \scs{#2} \end{array}\!\! \right )}  
\nc{\bincc}[2]{  \left ( {\scs{#1} \atop
    \vspace{-1cm}\scs{#2}} \right )}  
\nc{\bs}{\bar{S}} \nc{\cosum}{\sqsubset} \nc{\la}{\longrightarrow}
\nc{\rar}{\rightarrow} \nc{\dar}{\downarrow} \nc{\dprod}{**}
\nc{\dap}[1]{\downarrow \rlap{$\scriptstyle{#1}$}}
\nc{\md}{\mathrm{dth}} \nc{\uap}[1]{\uparrow
\rlap{$\scriptstyle{#1}$}} \nc{\defeq}{\stackrel{\rm def}{=}}
\nc{\disp}[1]{\displaystyle{#1}} \nc{\dotcup}{\
\displaystyle{\bigcup^\bullet}\ } \nc{\gzeta}{\bar{\zeta}}
\nc{\hcm}{\ \hat{,}\ } \nc{\hts}{\hat{\otimes}}
\nc{\barot}{{\otimes}} \nc{\free}[1]{\bar{#1}}
\nc{\uni}[1]{\tilde{#1}} \nc{\hcirc}{\hat{\circ}} \nc{\lleft}{[}
\nc{\lright}{]} \nc{\lc}{\lfloor} \nc{\rc}{\rfloor}
\nc{\curlyl}{\left \{ \begin{array}{c} {} \\ {} \end{array}
    \right .  \!\!\!\!\!\!\!}
\nc{\curlyr}{ \!\!\!\!\!\!\!
    \left . \begin{array}{c} {} \\ {} \end{array}
    \right \} }
\nc{\longmid}{\left | \begin{array}{c} {} \\ {} \end{array}
    \right . \!\!\!\!\!\!\!}
\nc{\onetree}{\bullet} \nc{\ora}[1]{\stackrel{#1}{\rar}}
\nc{\ola}[1]{\stackrel{#1}{\la}}
\nc{\ot}{\otimes} \nc{\mot}{{{\boxtimes\,}}}
\nc{\otm}{\overline{\boxtimes}} \nc{\sprod}{\bullet}
\nc{\scs}[1]{\scriptstyle{#1}} \nc{\mrm}[1]{{\rm #1}}
\nc{\margin}[1]{\marginpar{\rm #1}}   
\nc{\dirlim}{\displaystyle{\lim_{\longrightarrow}}\,}
\nc{\invlim}{\displaystyle{\lim_{\longleftarrow}}\,}
\nc{\mvp}{\vspace{0.3cm}} \nc{\tk}{^{(k)}} \nc{\tp}{^\prime}
\nc{\ttp}{^{\prime\prime}} \nc{\svp}{\vspace{2cm}}
\nc{\vp}{\vspace{8cm}} \nc{\proofbegin}{\noindent{\bf Proof: }}
\nc{\proofend}{$\blacksquare$ \vspace{0.3cm}}
\nc{\modg}[1]{\!<\!\!{#1}\!\!>}
\nc{\intg}[1]{F_C(#1)} \nc{\lmodg}{\!
<\!\!} \nc{\rmodg}{\!\!>\!}
\nc{\cpi}{\widehat{\Pi}}
\nc{\sha}{{\mbox{\cyr X}}}  
\nc{\shap}{{\mbox{\cyrs X}}} 
\nc{\shpr}{\diamond}    
\nc{\shp}{\ast} \nc{\shplus}{\shpr^+}
\nc{\shprc}{\shpr_c}    
\nc{\msh}{\ast} \nc{\zprod}{m_0} \nc{\oprod}{m_1}
\nc{\vep}{\varepsilon} \nc{\labs}{\mid\!} \nc{\rabs}{\!\mid}
\nc{\sqmon}[1]{\langle #1\rangle}

\nc{\mmbox}[1]{\mbox{\ #1\ }} \nc{\dep}{\mrm{dep}} \nc{\fp}{\mrm{FP}}
\nc{\rchar}{\mrm{char}} \nc{\End}{\mrm{End}} \nc{\Fil}{\mrm{Fil}}
\nc{\Mor}{Mor\xspace} \nc{\gmzvs}{gMZV\xspace}
\nc{\gmzv}{gMZV\xspace} \nc{\mzv}{MZV\xspace}
\nc{\mzvs}{MZVs\xspace} \nc{\Hom}{\mrm{Hom}} \nc{\id}{\mrm{id}}
\nc{\im}{\mrm{im}} \nc{\incl}{\mrm{incl}} \nc{\map}{\mrm{Map}}
\nc{\mchar}{\rm char} \nc{\nz}{\rm NZ} \nc{\supp}{\mathrm Supp}

\nc{\Alg}{\mathbf{Alg}} \nc{\Bax}{\mathbf{Bax}} \nc{\bff}{\mathbf f}
\nc{\bfk}{{\bf k}} \nc{\bfone}{{\bf 1}}
\nc{\bfx}{{\bf x}}
\nc{\bfy}{\mathbf y}
\nc{\base}[1]{\bfone^{\otimes ({#1}+1)}} 
\nc{\Cat}{\mathbf{Cat}}

\nc{\detail}{\marginpar{\bf More detail}
    \noindent{\bf Need more detail!}
    \svp}
\nc{\Int}{\mathbf{Int}} \nc{\Mon}{\mathbf{Mon}}
\nc{\rbtm}{{shuffle }} \nc{\rbto}{{Rota-Baxter }}
\nc{\remarks}{\noindent{\bf Remarks: }} \nc{\Rings}{\mathbf{Rings}}
\nc{\Sets}{\mathbf{Sets}} \nc{\wtot}{\widetilde{\odot}}
\nc{\wast}{\widetilde{\ast}} \nc{\bodot}{\bar{\odot}}
\nc{\bast}{\bar{\ast}} \nc{\hodot}[1]{\odot^{#1}}
\nc{\hast}[1]{\ast^{#1}} \nc{\mal}{\mathcal{O}}
\nc{\tet}{\tilde{\ast}} \nc{\teot}{\tilde{\odot}}
\nc{\oex}{\overline{x}} \nc{\oey}{\overline{y}}
\nc{\oez}{\overline{z}} \nc{\oef}{\overline{f}}
\nc{\oea}{\overline{a}} \nc{\oeb}{\overline{b}}
\nc{\weast}[1]{\widetilde{\ast}^{#1}}
\nc{\weodot}[1]{\widetilde{\odot}^{#1}} \nc{\hstar}[1]{\star^{#1}}
\nc{\lae}{\langle} \nc{\rae}{\rangle}
\nc{\lf}{\lfloor}
\nc{\rf}{\rfloor}


\nc{\QQ}{{\mathbb Q}}
\nc{\RR}{{\mathbb R}}
\nc{\ZZ}{{\mathbb Z}}


\nc{\cala}{{\mathcal A}} \nc{\calb}{{\mathcal B}}
\nc{\calc}{{\mathcal C}}
\nc{\cald}{{\mathcal D}} \nc{\cale}{{\mathcal E}}
\nc{\calf}{{\mathcal F}} \nc{\calg}{{\mathcal G}}
\nc{\calh}{{\mathcal H}} \nc{\cali}{{\mathcal I}}
\nc{\call}{{\mathcal L}} \nc{\calm}{{\mathcal M}}
\nc{\caln}{{\mathcal N}} \nc{\calo}{{\mathcal O}}
\nc{\calp}{{\mathcal P}} \nc{\calr}{{\mathcal R}}
\nc{\cals}{{\mathcal S}} \nc{\calt}{{\mathcal T}}
\nc{\calu}{{\mathcal U}} \nc{\calw}{{\mathcal W}} \nc{\calk}{{\mathcal K}}
\nc{\calx}{{\mathcal X}} \nc{\CA}{\mathcal{A}}

\nc{\fraka}{{\mathfrak a}} \nc{\frakA}{{\mathfrak A}}
\nc{\frakb}{{\mathfrak b}} \nc{\frakB}{{\mathfrak B}}
\nc{\frakD}{{\mathfrak D}} \nc{\frakF}{\mathfrak{F}}
\nc{\frakf}{{\mathfrak f}} \nc{\frakg}{{\mathfrak g}}
\nc{\frakH}{{\mathfrak H}} \nc{\frakL}{{\mathfrak L}}
\nc{\frakM}{{\mathfrak M}} \nc{\bfrakM}{\overline{\frakM}}
\nc{\frakm}{{\mathfrak m}} \nc{\frakP}{{\mathfrak P}}
\nc{\frakN}{{\mathfrak N}} \nc{\frakp}{{\mathfrak p}}
\nc{\frakS}{{\mathfrak S}} \nc{\frakT}{\mathfrak{T}}
\nc{\frakX}{{\mathfrak X}}
\nc{\BS}{\mathbb{S}}

\font\cyr=wncyr10 \font\cyrs=wncyr7
\nc{\li}[1]{\textcolor{red}{Li: #1}}
\nc{\xm}[1]{\textcolor{blue}{[xiaomeng: #1]}}
\nc{\zhou}[1]{\textcolor{purple}{[Sanming: #1]}}
\nc{\revise}[1]{\textcolor{red}{#1}}
\usepackage{xcolor}
\usepackage[normalem]{ulem}


\nc{\ID}{{\rm I}}
\nc{\lbar}[1]{\overline{#1}}
\nc{\bre}{{\rm bre}}
\nc{\sd}{\cals}\nc{\rb}{\rm RB}
\nc{\A}{\rm A}\nc{\LL}{\rm L}
\nc{\tx}{\tilde{X}}
\nc{\col}{\Delta_{\varepsilon}}
\nc{\mul}{m_{RT}}\nc{\ul}{u_{RT}}
\nc{\epl}{\varepsilon_{RT}}
\nc{\hl}{H_{RT}}\nc{\arro}[1]{#1}
\nc{\px}{P_{\tx}}
\nc{\pw}{P_{\mathfrak{w}}}
\nc{\pl}{B^+}
\nc{\Cay}{\mathrm{Cay}}
\nc{\caygs}{\mathrm{Cay}(G,S)}
\nc{\pp}{\pl}\nc{\ppp}[1]{B^+(#1)}
\nc{\cay}{\mathrm{Cay}(\mathbb{Z}_m,S)}
\nc{\dn}{D_{2n}}
\nc{\auttf}{\mathrm{Aut}_{K_2}}
\nc{\autgs}{\mathrm{Aut}(\Gamma\times \Sigma)}
\nc{\pge}{\mathrm{P}(\Gamma, C_{2k})}
\nc{\autge}{\mathrm{Aut}(\Gamma\times C_{2k})}
\nc{\pgo}{\mathrm{P}(\Gamma, C_{2k+1})}
\nc{\autgo}{\mathrm{Aut}(\Gamma\times C_{2k+1})}
\nc{\ncy}{N_{C_n}}
\nc{\nga}{N_{\Gamma}}
\nc{\ngc}{N_{\Gamma\times C_n}}
\nc{\autgc}{\mathrm{Aut}(\Gamma\times C_n)}
\nc{\pgs}{\mathrm{P}(\Gamma, \Sigma)}
\nc{\pgc}{\mathrm{P}(\Gamma, C_n)}
\nc{\fix}{{\mathrm Fix\,}}
\nc{\auts}{\mathrm{Aut}_{\Sigma}\,}
\nc{\aut}{\mathrm{Aut}\,}
\nc{\ben}{\beta_1, \ldots, \beta_n}
\nc{\aln}{\alpha_1, \ldots, \alpha_n}
\nc{\bfc}{{\mathrm B}(C_n)}

\begin{document}

\openup 0.5\jot

\title[stability of graph pairs]{stability of graph pairs involving cycles}
%
\author{Xiaomeng Wang}
\address{Department of Mathematics, Lanzhou University, Lanzhou, Gansu 730000, P.\,R. China}
         \email{wangxm2015@lzu.edu.cn}

\author{Shou-jun Xu}
\address{Department of Mathematics, Lanzhou University, Lanzhou, Gansu 730000, P.\,R. China}
         \email{shjxu@lzu.edu.cn}

\author{Sanming Zhou}
\address{School of Mathematics and Statistics, The University of Melbourne, Parkville, VIC 3010, Australia}
        \email{sanming@unimelb.edu.au}

\date{\today}
\begin{abstract}
A graph pair $(\Gamma, \Sigma)$ is called stable if $\aut(\Gamma)\times\aut(\Sigma)$ is isomorphic to $\aut(\Gamma\times\Sigma)$ and unstable otherwise, where $\Gamma\times\Sigma$ is the direct product of $\Gamma$ and $\Sigma$. A graph is called $R$-thin if distinct vertices have different neighbourhoods. $\Gamma$ and $\Sigma$ are said to be coprime if there is no nontrivial graph $\Delta$ such that $\Gamma \cong \Gamma_1 \times \Delta$ and $\Sigma \cong \Sigma_1 \times \Delta$ for some graphs $\Gamma_1$ and $\Sigma_1$. An unstable graph pair $(\Gamma, \Sigma)$ is called nontrivially unstable if $\Gamma$ and $\Sigma$ are $R$-thin connected coprime graphs and at least one of them is non-bipartite. This paper contributes to the study of the stability of graph pairs with a focus on the case when $\Sigma = C_n$ is a cycle. We introduce two key concepts in our study, namely the compatibility of $n$ with $\Gamma$ and an auxiliary graph $\Gamma^*$ on the same vertex set as $\Gamma$. We prove that for an $R$-thin connected graph $\Gamma$ and an integer $n \ge 3$ with $n \neq 4$ such that at least one of $\Gamma$ and $C_n$ is non-bipartite, if $n$ is compatible with $\Gamma$, or $n \ge 5$ is odd and for every edge $\{u, v\}$ of $\Gamma^*$ the set of common neighbours of $u$ and $v$ in $\Gamma$ is not an independent set of $\Gamma$, then $(\Gamma, C_n)$ is nontrivially unstable if and only if at least one $C_n$-automorphism of $\Gamma$ is nondiagonal. In the case when $\Gamma$ is an $R$-thin connected non-bipartite graph, we obtain the following results: (i) $(\Gamma, K_2)$ is unstable if and only if $(\Gamma, C_{n})$ is unstable for every even integer $n \geq 4$; (ii) if an even integer $n \ge 6$ is compatible with $\Gamma$, then $(\Gamma, C_{n})$ is nontrivially unstable if and only if $(\Gamma, K_2)$ is unstable; (iii) if there is an even integer $n \ge 6$ compatible with $\Gamma$ such that $(\Gamma, C_{n})$ is nontrivially unstable, then $(\Gamma, C_{m})$ is unstable for all even integers $m \ge 6$. We also prove that for an $R$-thin connected graph $\Gamma$ and an odd integer $n \ge 3$, if $n$ is compatible with $\Gamma$, or $n \ge 5$ and for every edge $\{u, v\}$ of $\Gamma^*$ the set of common neighbours of $u$ and $v$ in $\Gamma$ is not an independent set of $\Gamma$, then $(\Gamma, C_{n})$ is stable. Three conjectures arisen from our study are proposed in this paper.
\end{abstract}

\keywords{stable graph pair, stable graph, expected automorphism, direct product of graphs.}
\maketitle


\setcounter{section}{0}

\allowdisplaybreaks

\section{Introduction}

All graphs considered in this paper are finite and undirected with no loops or parallel edges. As usual, for a graph $\Gamma$, we use $V(\Gamma)$ and $E(\Gamma)$ to denote its vertex set and edge set respectively, and call $|V(\Gamma)|$ the order of $\Gamma$. The edge between two adjacent vertices $u, v$ of $\Gamma$ is denoted by $\{u, v\}$, the neighbourhood of a vertex $u$ in $\Gamma$ is denoted by $N_{\Gamma}(u)$, and the degree of $u$ in $\Gamma$ is defined as $\deg(u) = |N_{\Gamma}(u)|$. Denote by $d(u,v)$ the distance in $\Gamma$ between two vertices $u, v$ of $\Gamma$ and by $\Gamma[S]$ the subgraph of $\Gamma$ induced by a subset $S$ of $V(\Gamma)$. An {\em automorphism} of $\Gamma$ is a permutation $\sigma$ of $V(\Gamma)$ such that for any $u, v \in V(\Gamma)$, $\{u,v\}$ is an edge of $\Gamma$ if and only if $\{u^\sigma,v^\sigma\}$ is an edge of $\Gamma$, where for each $w \in V(\Gamma)$, $w^{\sigma}$ is the image of $w$ under $\sigma$. The \emph{automorphism group} of $\Gamma$, denoted by $\aut(\Gamma)$, is the group of automorphisms of $\Gamma$ under the composition of permutations. A graph $\Gamma$ is called \emph{vertex-transitive} if for any $u, v \in V(\Gamma)$ there exists an element $\sigma \in \aut(\Gamma)$ such that $u^{\sigma} = v$. We use $K_n$ to denote the complete graph with order $n \ge 1$ and $C_n$ the cycle with order $n \ge 3$. A graph is {\em trivial} if it has only one vertex and {\em nontrivial} otherwise.

The {\em direct product} \cite{ham11} of two graphs $\Gamma$ and $\Sigma$, denoted by $\Gamma \times \Sigma$, is the graph with vertex set $V(\Gamma) \times V(\Sigma)$ in which $(u, v)$ and $(x, y)$ are adjacent if and only if $u$ is adjacent to $x$ in $\Gamma$ and $v$ is adjacent to $y$ in $\Sigma$. In particular, $\Gamma \times K_2$ is the {\em canonical double cover} $D(\Gamma)$ of $\Gamma$. A graph is {\em prime} with respect to the direct product if it is nontrivial and cannot be represented as the direct product of two nontrivial graphs. Two graphs $\Gamma$ and $\Sigma$ are said to be \emph{coprime} with respect to the direct product if there is no nontrivial graph $\Delta$ such that $\Gamma \cong \Gamma' \times \Delta$ and $\Sigma \cong \Sigma' \times \Delta$ for some graphs $\Gamma'$ and $\Sigma'$. It is readily seen that for any graphs $\Gamma$ and $\Sigma$, $\aut(\Gamma)\times\aut(\Sigma)$ (direct product of groups) is isomorphic to a subgroup of $\aut(\Gamma\times\Sigma)$. As defined by Qin \textit{et al.} in \cite{qin211}, a graph pair $(\Gamma, \Sigma)$ is called {\em stable} if $\aut(\Gamma)\times\aut(\Sigma)$ is isomorphic to $\aut(\Gamma\times\Sigma)$ and {\em unstable} otherwise. This definition generalizes the notion of the stability of a graph \cite{maru89, wilson08} in the sense that $(\Gamma, K_2)$ is stable or unstable if and only if $\Gamma$ is stable or unstable, respectively. The stability of graphs has been studied considerably in the past more than three decades \cite{lauri15, maru89, maru92, ned96, sur01, sur03, wilson08}. For example, in \cite[Theorems C.1-C.4]{wilson08}, Wilson gave four sufficient conditions (see \cite{qin19} for an amendment to one of them) for a circulant graph to be unstable. In \cite[Conjecture 1.3]{qin19},  Qin \textit{et al.} conjectured that there is no nontrivially unstable circulant of odd order, and in \cite[Theorem 1.4]{qin19} they proved that this is true for circulants of prime order. The stability of the generalized Petersen graphs has been completely determined owing to  \cite[Theorems P.1-P.2]{wilson08} and \cite[Corollary 1.3]{qin21}. In contrast, the study of the stability of general graph pairs started only recently \cite{qin211, qin22}.

A graph $\Gamma$ is called {\em $R$-thin} (or {\em vertex-determining}) if $N_{\Gamma}(u) \ne N_{\Gamma}(v)$ for any two distinct vertices $u, v$ of $\Gamma$. Graphs which are not $R$-thin are called \emph{$R$-thick}. For example, $C_4$ is R-thick while $C_n$ is R-thin for any $n \ge 3$ with $n \neq 4$. An unstable graph is called \emph{nontrivially unstable} \cite{qin211, wilson08} if it is connected, non-bipartite and $R$-thin, and trivially unstable otherwise. In~\cite[Theorem 1.3]{qin211}, Qin \textit{et al.} proved that if $(\Gamma,\Sigma)$ is stable, then $\Gamma$ and $\Sigma$ are coprime $R$-thin graphs, and if in addition both $\aut(\Gamma)$ and $\aut(\Sigma)$ are nontrivial groups, then both $\Gamma$ and $\Sigma$ are connected and at least one of them is non-bipartite. Due to this result an unstable graph pair $(\Gamma,\Sigma)$ is called {\em nontrivially unstable} \cite{qin211} if $\Gamma$ and $\Sigma$ are $R$-thin connected coprime graphs and at least one of them is non-bipartite. This definition agrees with the concept of a nontrivially unstable graph introduced in \cite{wilson08} in the sense that $(\Gamma, K_2)$ is nontrivially unstable if and only if $\Gamma$ is nontrivially unstable.

Let $\Gamma$ and $\Sigma$ be graphs with $V(\Sigma)=\{1, \ldots, n\}$. As in \cite[Definition 2.3]{qin211}, we use $\pgs$ to denote the set of elements of $\aut(\Gamma\times\Sigma)$ that leave the partition $\{V(\Gamma)\times\{i\}:i \in V(\Sigma)\}$ invariant. Note that $\pgs$ is a subgroup of $\aut(\Gamma\times\Sigma)$. An $n$-tuple of permutations $(\aln)$ of $V(\Gamma)$ is called \cite{qin211} a {\em $\Sigma$-automorphism} of $\Gamma$ if for all $u, v \in V(\Gamma)$, $\{u, v\} \in E(\Gamma)$ if and only if $\{u^{\alpha_i}, v^{\alpha_j}\} \in E(\Gamma)$ for all $i, j \in V(\Sigma)$ with $\{i,j\} \in E(\Sigma)$. A $\Sigma$-automorphism $(\aln)$ of $\Gamma$ is said to be {\em nondiagonal} if $\alpha_i\neq \alpha_j$ for at least one pair of vertices $i, j$ of $\Sigma$. The set of all $\Sigma$-automorphisms of $\Gamma$ with operation defined by
$(\alpha_1, \ldots, \alpha_n)(\beta_1, \ldots, \beta_n) = (\alpha_1\beta_1, \ldots, \alpha_n\beta_n)$ is a group, written $\auts(\Gamma)$, which is called \cite{qin211} the \emph{$\Sigma$-automorphism group} of $\Gamma$. In particular, $\mathrm{Aut}_{K_2}(\Gamma)$ is exactly the \emph{two-fold automorphism group} of $\Gamma$ (see \cite{lauri04,lauri11,lauri15}) and its elements are called the \emph{two-fold automorphisms} of $\Gamma$.

In \cite[Theorem 1.8]{qin211}, Qin \textit{et al.} proved that, for a connected regular graph $\Gamma$ and a connected vertex-transitive graph $\Sigma$ with coprime degrees, if both $\Gamma$ and $\Sigma$ are $R$-thin and at least one of them is non-bipartite, then $(\Gamma, \Sigma)$ is nontrivially unstable if and only if at least one $\Sigma$-automorphism of $\Gamma$ is nondiagonal. In the same paper they proposed to study the stability of graph pairs $(\Gamma, \Sigma)$ for various special families of graphs $\Gamma$ and/or various special families of graphs $\Sigma$, including the case when $\Sigma$ is $K_n$ or $C_n$. In \cite[Theorem 6]{qin22}, Qin \textit{et al.} proved that, if $\Gamma$ and $\Sigma$ are regular of coprime degrees and $\Sigma$ is vertex-transitive, then $(\Gamma, \Sigma)$ is nontrivially unstable if and only if $(\Gamma, K_2)$ is nontrivially unstable. In the same paper they also studied the stability of $(\Gamma, C_n)$ in the case when $\Gamma$ is regular (\cite[Proposition 17]{qin22}), and they asked under what conditions the pair $(\Gamma, C_n)$ is nontrivially unstable given that $\Gamma$ is stable and $n\ge 6$ is even (\cite[Question 18]{qin22}).

Motivated by the works above, in this paper we study the stability of $(\Gamma, C_n)$, where $n \ge 3$ and $\Gamma$ needs not be regular. We identify two conditions on $\Gamma$ and $n$ which are closely related to the stability of $(\Gamma, C_n)$. We present these conditions by way of the following definition.

\begin{defn}
\label{defn:comp}
Given a graph $\Gamma$, define $\Gamma^*$ to be the graph with vertex set $V(\Gamma)$ in which $u, v \in V(\Gamma)$ are adjacent if and only if $\deg(u)=\deg(v) = 2|N_\Gamma(u)\cap N_\Gamma(v)|$.

For any $u \in V(\Gamma)$, if $u$ is an isolated vertex of $\Gamma^*$, define $L(u)=\{0\}$; if $u$ has degree $1$ in $\Gamma^*$, define $L(u)=\{0,1\}$; if $u$ has degree at least $2$ in $\Gamma^*$, define $L(u)$ to be the (not necessarily nonempty) set of lengths of the cycles containing $u$ in $\Gamma^*$.

We say that an integer $n \ge 3$ is \emph{compatible} with $\Gamma$ if either (i) $n$ is odd and $n \notin L(u)$ for at least one vertex $u \in V(\Gamma)
$, or (ii) $n$ is even and $n/2 \notin L(u)$ for at least one vertex $u \in V(\Gamma)$. An integer $n \ge 3$ is \emph{incompatible} with $\Gamma$ if it is not compatible with $\Gamma$.
\end{defn}

The main results in this paper are as follows.

\begin{theorem}\label{th:thr3.7}
Let $\Gamma$ be an $R$-thin connected graph and $n \ge 3$ an integer with $n\neq 4$. Suppose that at least one of  $\Gamma$ and $C_n$ is non-bipartite. Then the following statements hold:
\begin{enumerate}
\item
\label{it:it3.7.1}
if $n$ is compatible with $\Gamma$, then $(\Gamma, C_n)$ is nontrivially unstable if and only if at least one $C_n$-automorphism of $\Gamma$ is nondiagonal;
\item
\label{it:it3.7.2}
if $n \ge 5$ is odd and for every $\{u,v\} \in E(\Gamma^\ast)$, $\nga(u)\cap\nga(v)$ is not an independent set of $\Gamma$, then $(\Gamma, C_n)$ is nontrivially unstable if and only if  at least one $C_n$-automorphism of $\Gamma$ is nondiagonal.
\label{it:it3.115}
\end{enumerate}
\end{theorem}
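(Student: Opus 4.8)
The plan is to reduce Theorem~\ref{th:thr3.7} to the known criterion in \cite[Theorem 1.8]{qin211} by showing that, under either hypothesis, the group $\mathrm{P}(\Gamma, C_n)$ coincides with the image of $\aut_{C_n}(\Gamma)$ inside $\aut(\Gamma \times C_n)$, and that a $C_n$-automorphism acts diagonally precisely when the corresponding element of $\aut(\Gamma \times C_n)$ lies in $\aut(\Gamma) \times \aut(C_n)$. Concretely, given any $\sigma \in \aut(\Gamma \times C_n)$, one first wants to prove that $\sigma$ preserves the partition $\{V(\Gamma) \times \{i\} : i \in V(C_n)\}$, i.e. $\sigma \in \mathrm{P}(\Gamma, C_n)$; once this is in place, $\sigma$ is encoded by a permutation of the layers (an automorphism of $C_n$ since the quotient structure is $C_n$) together with an $n$-tuple of permutations of $V(\Gamma)$, and the edge condition for $\sigma$ translates exactly into the definition of a $C_n$-automorphism of $\Gamma$. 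The pair is then stable iff every $\sigma$ is ``expected'', and unstable iff some $C_n$-automorphism is nondiagonal; combined with the hypothesis that one of $\Gamma$, $C_n$ is non-bipartite and that $\Gamma$ is $R$-thin connected (hence $\Gamma$ and $C_n$ are automatically coprime for $n \neq 4$, via an $R$-thinness/connectivity argument), this yields the ``nontrivially unstable'' conclusion.

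The technical heart is therefore the claim that every automorphism of $\Gamma \times C_n$ preserves the layer partition. Here is where the two hypotheses enter and diverge. The natural approach is to analyse the neighbourhood structure in $\Gamma \times C_n$: for a vertex $(u,i)$, its neighbourhood is $N_\Gamma(u) \times \{i-1, i+1\}$. Two vertices $(u,i)$ and $(v,j)$ lie in a common layer or in layers at distance $2$; one shows that the ``combinatorial type'' of how their neighbourhoods in $\Gamma \times C_n$ intersect detects whether $i = j$ or $|i-j| = 2 \pmod n$. The obstruction is exactly when $\deg(u) = \deg(v) = 2|N_\Gamma(u) \cap N_\Gamma(v)|$ — i.e. when $\{u,v\} \in E(\Gamma^*)$ — because then $N_\Gamma(u) \times \{i-1,i+1\}$ and $N_\Gamma(v) \times \{j-1,j+1\}$ can have the same cardinality and intersection pattern whether the second coordinates agree or differ by $2$, so an automorphism could in principle ``rotate'' part of one layer onto a distance-$2$ layer. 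One then shows that tracking these ambiguities around the graph forces a consistent shift, and the shift closes up into a cycle in $\Gamma^*$ of length $n$ (for $n$ odd) or $n/2$ (for $n$ even). If $n$ is compatible with $\Gamma$, some vertex $u$ does not admit such a cycle, which rules out the bad behaviour and forces $\sigma$ to fix the layer through $u$ setwise, and then connectivity propagates this to all layers. For the second hypothesis, when $n$ is odd and $\geq 5$, the condition that $N_\Gamma(u) \cap N_\Gamma(v)$ is never independent for $\{u,v\} \in E(\Gamma^*)$ gives a finer local obstruction: the presence of an edge inside the common neighbourhood produces a triangle-like configuration in $\Gamma \times C_n$ that cannot survive a distance-$2$ layer shift when $n$ is odd, again pinning the partition down.

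In carrying this out I would proceed in the following order. First, establish the dictionary: $\mathrm{P}(\Gamma, C_n) \cong \aut_{C_n}(\Gamma) \rtimes (\text{subgroup of } \aut(C_n))$ or at least a description making ``nondiagonal $C_n$-automorphism exists $\iff$ $\mathrm{P}(\Gamma,C_n)$ strictly larger than expected'' transparent; this should be essentially bookkeeping following \cite{qin211}. Second, prove the coprimality of $\Gamma$ and $C_n$ from $R$-thinness, connectedness, and $n \neq 4$ — since $C_n$ for $n \neq 4$ is prime with respect to the direct product (the only composite cycle-type decomposition forces a $K_2$ factor, which would make $\Gamma$ have a nontrivial factor contradicting either $R$-thinness or the bipartite/non-bipartite hypothesis), any common factor $\Delta$ would have to be trivial. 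Third — the main step — prove the layer-preservation lemma under hypothesis (a), isolating the combinatorial fact about neighbourhood-intersection types in $\Gamma \times C_n$ and showing the ``shift'' obstruction lives in cycles of $\Gamma^*$. Fourth, prove the analogous layer-preservation lemma under hypothesis (b) using the non-independence of common neighbourhoods and the oddness of $n$. Finally, assemble: layer-preservation $+$ dictionary $+$ coprimality $+$ \cite[Theorem 1.8]{qin211}-style reasoning gives the equivalence. I expect the third step to be the genuine obstacle — the case analysis of how neighbourhoods of vertices in different layers of $\Gamma \times C_n$ can coincide, and the argument that an automorphism's failure to preserve layers forces a closed walk of the right parity in $\Gamma^*$, is exactly the place where the definition of ``compatible'' was reverse-engineered, so getting the bookkeeping airtight (especially the even/$n/2$ versus odd/$n$ dichotomy and the exclusion $n=4$) will require care.
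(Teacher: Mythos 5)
Your plan follows essentially the same route as the paper's proof: the technical heart is showing $\autgc=\pgc$ under either hypothesis (the paper's Lemma~\ref{le:ler3.3}, itself resting on Lemma~\ref{le:ler3.2}), with exactly the neighbourhood-intersection computation you describe --- a layer-shifting automorphism forces $|N_\Gamma(w)|=2|N_\Gamma(u_r)\cap N_\Gamma(u_{r+1})|$ along a closed sequence of vertices, i.e.\ a cycle of length $n$ (odd case) or $n/2$ (even case) in $\Gamma^*$, which compatibility forbids --- and the conclusion is then read off from \cite[Lemma~2.6]{qin211}. The one sub-claim that is wrong as stated is that coprimality of $\Gamma$ and $C_n$ follows from $R$-thinness, connectedness and $n\neq 4$ alone: it does not (e.g.\ $\Gamma=\Sigma\times C_n$ with $n$ odd can be $R$-thin and connected, and $C_n$ being prime does not prevent $\Gamma$ from having $C_n$ itself as a factor). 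Coprimality must instead be extracted from the compatibility hypothesis (as in the paper's Remark~\ref{rem:def}(vi), using also that at least one of $\Gamma$, $C_n$ is non-bipartite to exclude the residual $K_2$-factor case when $n\equiv 2\pmod 4$) or, in part (b), from the non-independence hypothesis, since a factorization $\Gamma\cong\Sigma\times C_n$ produces edges of $\Gamma^*$ whose common neighbourhood is an independent set. This is a local repair rather than a structural flaw in your outline.
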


\begin{theorem}
\label{co:co3.3}
Let $\Gamma$ be an $R$-thin connected non-bipartite graph.
\begin{enumerate}
\item $(\Gamma, K_2)$ is unstable if and only if $(\Gamma, C_{2k})$ is unstable for every integer $k \geq 2$.
\item If $k \ge 3$ is an integer such that $2k$ is compatible with $\Gamma$, then $(\Gamma, C_{2k})$ is nontrivially unstable if and only if $(\Gamma, K_2)$ is unstable.
\item If there exists an integer $k \ge 3$ such that $2k$ is compatible with $\Gamma$ and $(\Gamma, C_{2k})$ is nontrivially unstable, then $(\Gamma, C_{2l})$ is unstable for every $l \ge 3$.
\end{enumerate}
\end{theorem}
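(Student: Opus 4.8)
The plan is to deduce all three parts from Theorem~\ref{th:thr3.7}(a) together with two elementary ingredients. The first is a \emph{doubling correspondence}: for every integer $k\ge 2$, $\Gamma$ has a nondiagonal $C_{2k}$-automorphism if and only if it has a nondiagonal two-fold automorphism. I would prove this directly. If $(\phi,\psi)$ is a two-fold automorphism with $\phi\ne\psi$, then $(\psi,\phi)$ is also a two-fold automorphism (interchange the two vertices in the defining edge condition), so the alternating $2k$-tuple $(\phi,\psi,\phi,\psi,\dots,\phi,\psi)$, each of whose cyclically consecutive pairs equals $(\phi,\psi)$ or $(\psi,\phi)$, is a nondiagonal $C_{2k}$-automorphism; conversely, if $(\alpha_1,\dots,\alpha_{2k})$ is a nondiagonal $C_{2k}$-automorphism then $\alpha_i\ne\alpha_{i+1}$ for some $i$ (indices modulo $2k$), and $(\alpha_i,\alpha_{i+1})$ is a nondiagonal two-fold automorphism. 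The second ingredient is the criterion for the $K_2$-case: since $\Gamma$ is connected and non-bipartite, $D(\Gamma)=\Gamma\times K_2$ is connected and bipartite with the two $\Gamma$-layers as its parts, so every automorphism of $D(\Gamma)$ preserves that bipartition, giving $|\aut(D(\Gamma))|=2\,|\auttf(\Gamma)|$, whereas the image of $\aut(\Gamma)\times\aut(K_2)$ in $\aut(D(\Gamma))$ has order $2\,|\aut(\Gamma)|$; hence $(\Gamma,K_2)$ is unstable if and only if $\Gamma$ has a nondiagonal two-fold automorphism (the known characterisation of instability of $(\Gamma,K_2)$ via two-fold automorphisms, which I would cite). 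I would also use the routine fact that a nondiagonal $C_n$-automorphism $(\alpha_1,\dots,\alpha_n)$ of $\Gamma$ gives the automorphism $(u,i)\mapsto(u^{\alpha_i},i)$ of $\Gamma\times C_n$, which lies outside the image of $\aut(\Gamma)\times\aut(C_n)$, so that $(\Gamma,C_n)$ is unstable.

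For the forward implication of (a): if $(\Gamma,K_2)$ is unstable then, by the criterion, $\Gamma$ has a nondiagonal two-fold automorphism, hence by doubling a nondiagonal $C_{2k}$-automorphism for every $k\ge 2$, hence $(\Gamma,C_{2k})$ is unstable for every $k\ge 2$. For the converse I argue by contraposition: assuming $(\Gamma,K_2)$ stable, I exhibit one $k\ge 2$ with $(\Gamma,C_{2k})$ stable. Take any $k>|V(\Gamma)|$. Then $2k\ne 4$, and $2k$ is compatible with $\Gamma$ because incompatibility would require $k\in L(u)$ for every vertex $u$, which is impossible since the elements of each $L(u)$ (namely $0$, $1$, or cycle lengths in $\Gamma^\ast$) are all at most $|V(\Gamma)|<k$. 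Moreover $\Gamma$ and $C_{2k}$ are coprime: a nontrivial common factor $\Delta$ would be non-bipartite (as $\Gamma$ is) and, being a connected factor of the $2$-regular graph $C_{2k}$, would be a connected regular graph of degree $1$ or $2$, hence --- being non-bipartite --- an odd cycle $C_d$; but $C_{2k}\cong\Sigma'\times C_d$ then forces $\Sigma'\cong K_2$ and $d=k$, so $C_k$ would be a direct factor of $\Gamma$, impossible as $C_k$ has $k>|V(\Gamma)|$ vertices. Now stability of $(\Gamma,K_2)$ gives, via the criterion and doubling, that $\Gamma$ has no nondiagonal $C_{2k}$-automorphism, so by Theorem~\ref{th:thr3.7}(a) the pair $(\Gamma,C_{2k})$ is not nontrivially unstable; and since $\Gamma$ and $C_{2k}$ are $R$-thin, connected and coprime with $\Gamma$ non-bipartite, not being nontrivially unstable means being stable.

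Part (b) is then immediate: for $k\ge 3$ with $2k$ compatible with $\Gamma$ one has $2k\ne 4$ and $\Gamma$ non-bipartite, so Theorem~\ref{th:thr3.7}(a) applies and says $(\Gamma,C_{2k})$ is nontrivially unstable if and only if $\Gamma$ has a nondiagonal $C_{2k}$-automorphism, which by doubling and the criterion is equivalent to $(\Gamma,K_2)$ being unstable. Part (c) follows at once: if some $k_0\ge 3$ with $2k_0$ compatible with $\Gamma$ has $(\Gamma,C_{2k_0})$ nontrivially unstable, then part (b) makes $(\Gamma,K_2)$ unstable, and then the forward implication of (a) makes $(\Gamma,C_{2l})$ unstable for every $l\ge 2$, in particular for every $l\ge 3$.

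The only step that goes beyond bookkeeping is the coprimality argument inside the converse of (a): one must choose $n=2k$ both generic enough for Theorem~\ref{th:thr3.7}(a) to apply and sharing no direct factor with $\Gamma$, so that the absence of a nondiagonal $C_n$-automorphism upgrades from ``not nontrivially unstable'' to ``stable''; any $k>|V(\Gamma)|$ works. Everything else reduces to Theorem~\ref{th:thr3.7}(a), the doubling correspondence, and the standard two-fold-automorphism criterion for instability of $(\Gamma,K_2)$.
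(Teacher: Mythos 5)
Your proposal is correct, and all three parts do follow from the ingredients you assemble; the interesting difference from the paper lies in how part (b) is handled. Your forward direction of (a) --- turning a nondiagonal two-fold automorphism $(\phi,\psi)$ into the alternating tuple $(\phi,\psi,\phi,\psi,\dots)$ and hence an automorphism of $\Gamma\times C_{2k}$ outside the canonical subgroup --- is exactly the paper's construction (the paper treats $k=2$ separately via $C_4$ being $R$-thick, but your uniform argument also works), and your converse of (a) (choose $k>|V(\Gamma)|$ so that $2k$ is automatically compatible and $\Gamma$, $C_{2k}$ are coprime) mirrors the paper's choice of $l$ with $l\notin L(u)$ for all $u$; your hands-on coprimality argument can be replaced by a citation of Remark~\ref{rem:def}(vi), which already records that compatibility plus non-bipartiteness of $\Gamma$ forces coprimality. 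Where you genuinely diverge is (b): you read off a nondiagonal $C_{2k}$-automorphism directly from Theorem~\ref{th:thr3.7}(a) and then restrict it to a single edge of $C_{2k}$ (Proposition~\ref{th:th1.1} together with connectivity, or Lemma~\ref{re:re1.1}(c)) to obtain a nondiagonal two-fold automorphism, whereas the paper proves the separate Lemma~\ref{th:th3.5}, which extracts a $C_{2k}$-automorphism from an unexpected automorphism of $\Gamma\times C_{2k}$, shows by a palindrome-and-shift argument that it must have the alternating form $(\alpha_1,\alpha_2,\alpha_1,\alpha_2,\dots)$, and then detours through Wilson's anti-automorphism/cross-cover/twist characterization (Lemma~\ref{le:le3.11}). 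Both routes rest on the same hard ingredient, Lemma~\ref{le:ler3.3} (compatibility forces $\autge=\pge$), but yours is shorter because once Theorem~\ref{th:thr3.7}(a) is available the full alternating structure is unnecessary: any two adjacent coordinates that differ already yield a nondiagonal two-fold automorphism. What you give up is only the extra structural information contained in Lemma~\ref{th:th3.5}, which Theorem~\ref{co:co3.3} does not need.
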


\begin{theorem}\label{th:th3.4}
Let $\Gamma$ be an $R$-thin connected graph and $k \ge 1$ an integer. If $2k+1$ is compatible with $\Gamma$, then $(\Gamma, C_{2k+1})$ is stable.
\end{theorem}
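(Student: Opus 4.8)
The plan is to show that every $C_{2k+1}$-automorphism of $\Gamma$ is diagonal; by Theorem~\ref{th:thr3.7}\eqref{it:it3.7.1} (applied with the odd integer $n=2k+1$, which is automatically $\neq 4$, and noting $C_{2k+1}$ is non-bipartite so the hypothesis on non-bipartiteness is satisfied) this forces $(\Gamma, C_{2k+1})$ to be stable rather than nontrivially unstable; and since $\Gamma$ and $C_{2k+1}$ are coprime $R$-thin connected graphs with one non-bipartite, ``not nontrivially unstable'' is the same as ``stable'' here. So the whole content is: \emph{compatibility of the odd number $2k+1$ with $\Gamma$ implies that $\auts(\Gamma)$ with $\Sigma = C_{2k+1}$ consists only of diagonal tuples.}

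First I would set up notation: label $V(C_{2k+1}) = \{1,\ldots,2k+1\}$ cyclically, and let $(\alpha_1,\ldots,\alpha_{2k+1})$ be a $C_{2k+1}$-automorphism of $\Gamma$. The defining condition says that for every edge $\{i,i+1\}$ of the cycle (indices mod $2k+1$) and all $u,v \in V(\Gamma)$, $\{u,v\}\in E(\Gamma)$ iff $\{u^{\alpha_i}, v^{\alpha_{i+1}}\}\in E(\Gamma)$. Equivalently, for each consecutive pair, the composite $\beta_i := \alpha_i \alpha_{i+1}^{-1}$ (or rather the relation ``$u \sim v \Leftrightarrow u^{\alpha_i} \sim v^{\alpha_{i+1}}$'') behaves like a ``two-fold automorphism'' on a single edge. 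The key local fact I would extract is: if $\{u,v\}\in E(\Gamma)$ then $N_\Gamma(u^{\alpha_i}) = (N_\Gamma(v))^{\alpha_{i+1}}$ in a suitable sense — more precisely, for adjacent $i, i+1$ and adjacent $u,v$, one gets that $\alpha_i$ and $\alpha_{i+1}$ agree on a neighbourhood-structure level, and iterating around the odd cycle $1 \to 2 \to \cdots \to 2k+1 \to 1$ produces a closed chain of length $2k+1$. The parity obstruction to a nondiagonal solution is precisely what the cycle-length set $L(u)$ in Definition~\ref{defn:comp} records: a nondiagonal $C_n$-automorphism forces, for each vertex $u$, a cycle through $u$ in $\Gamma^*$ whose length divides into $n$ in the prescribed way (length $n$ itself in the odd case). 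I would prove this by showing that if some $\alpha_i \neq \alpha_{i+1}$, then pick $u$ with $u^{\alpha_i} \neq u^{\alpha_{i+1}}$; the edge conditions around the cycle force $u^{\alpha_1}, u^{\alpha_2}, \ldots$ to trace out a closed walk in $\Gamma^*$ (using that consecutive $\alpha$'s relate $u$'s neighbourhood to another vertex's neighbourhood exactly when the degree/common-neighbour equality defining $E(\Gamma^*)$ holds), of total length $2k+1$, giving an odd closed walk hence an odd cycle in $\Gamma^*$ through that vertex, so $2k+1 \in L(u)$ for \emph{every} $u$ — contradicting that $2k+1$ is compatible with $\Gamma$, i.e. $2k+1 \notin L(u)$ for at least one $u$.

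The main obstacle I expect is the careful bookkeeping in the previous paragraph: translating the abstract ``$\Sigma$-automorphism'' condition into a statement about walks in the auxiliary graph $\Gamma^*$, and in particular verifying that a nondiagonal tuple really does produce a closed walk of length exactly $2k+1$ (not merely dividing $2k+1$, or of some ambiguous length) through every vertex of $\Gamma$. This requires understanding precisely why the condition $\deg(u) = \deg(v) = 2|N_\Gamma(u)\cap N_\Gamma(v)|$ is the right one — it should come from the fact that when $\alpha_i \neq \alpha_{i+1}$ but they must induce the same edge relation, the symmetric difference structure of neighbourhoods forces each relevant vertex to ``split'' its neighbourhood evenly between the two images, which is exactly the factor $2$ and the common-neighbour count. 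Once that correspondence is nailed down, the odd-cycle parity argument is short. A secondary point to check is the boundary behaviour ($\deg 0$ or $1$ in $\Gamma^*$, handled by the $L(u) = \{0\}$ and $L(u) = \{0,1\}$ conventions), but since $2k+1 \ge 3$ these never contain $2k+1$, so those vertices automatically witness compatibility and the argument goes through; I would remark on this to close the case cleanly. Finally I would invoke Theorem~\ref{th:thr3.7}\eqref{it:it3.7.1} to conclude.
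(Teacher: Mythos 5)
Your high-level skeleton is sound: if every $C_{2k+1}$-automorphism of $\Gamma$ is diagonal, then Theorem~\ref{th:thr3.7}(a) excludes nontrivial instability, and since compatibility of an odd $n$ forces $\Gamma$ and $C_n$ to be coprime (Remark~\ref{rem:def}(vi)) while $C_{2k+1}$ is $R$-thin, connected and non-bipartite, ``unstable'' and ``nontrivially unstable'' coincide, so the pair is stable. The genuine gap is in your proof of the key claim. You propose that a nondiagonal $(\alpha_1,\ldots,\alpha_{2k+1})\in\aut_{C_{2k+1}}(\Gamma)$ forces a closed walk of length $2k+1$ in $\Gamma^*$ through every vertex, and you locate the role of compatibility there. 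This conflates two different phenomena. The factor $2$ in the definition of $E(\Gamma^*)$ comes from $|\ncy(l)|=2$: it appears when an automorphism $\sigma$ of $\Gamma\times C_n$ sends $(u,i)$ and $(u,i+2)$ from distinct layers into the \emph{same} layer, so that $|N(\sigma(u,i))\cap N(\sigma(u,i+2))|=2|N_\Gamma(v)\cap N_\Gamma(w)|$ while this quantity must also equal $|N_\Gamma(u)|$. Such $\sigma$ are precisely the elements of $\autgc\setminus\pgc$, and excluding them is where compatibility is actually consumed (Lemma~\ref{le:ler3.3}(a)) --- it is already hidden inside your citation of Theorem~\ref{th:thr3.7}(a). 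A nondiagonal $C_n$-automorphism corresponds to a partition-\emph{preserving} automorphism acting differently on different layers, and no factor of $2$ or neighbourhood ``splitting'' arises from it: indeed the $\Sigma$-automorphism condition gives $N_\Gamma(u^{\alpha_i})=N_\Gamma(u)^{\alpha_{i+1}}=N_\Gamma(u^{\alpha_{i+2}})$, so $u^{\alpha_i}$ and $u^{\alpha_{i+2}}$ could only be adjacent in $\Gamma^*$ if $\deg(u^{\alpha_i})=2\deg(u^{\alpha_i})=0$; your walk in $\Gamma^*$ never materializes in a connected graph. Example~\ref{ex:comp} shows the mismatch concretely: for the graph of Figure~\ref{fig:gammastar}, $3$ is incompatible, all $C_3$-automorphisms are diagonal, yet $(\Gamma,C_3)$ is unstable via a partition-breaking automorphism.

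The claim you need is nonetheless true, for a much simpler reason that uses only $R$-thinness and the oddness of the cycle, not compatibility: the identity $N_\Gamma(u^{\alpha_i})=N_\Gamma(u^{\alpha_{i+2}})$ above together with $R$-thinness gives $\alpha_i=\alpha_{i+2}$ for all $i$, and since $\gcd(2,2k+1)=1$ all the $\alpha_i$ coincide. (The paper reaches the same conclusion via the reflection and rotation of $C_{2k+1}$ and Lemma~\ref{re:re1.1}; see also the proof of Theorem~\ref{thm:coines}, where diagonality is derived without any compatibility hypothesis.) With that repair your argument closes, and it then agrees in substance with the paper's proof, which applies Lemma~\ref{le:ler3.3}(a) to get $\autgo=\pgo$ and then the diagonality argument plus Lemma~\ref{le:ler3.6}(a).
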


\begin{theorem}
\label{thm:coines}
Let $\Gamma$ be an $R$-thin connected graph and $k\geq 2$ an integer. If for every $\{u,v\}\in E(\Gamma^*)$, $N_{\Gamma}(u)\cap N_{\Gamma}(v)$ is not an independent set of $\Gamma$, then $(\Gamma, C_{2k+1})$ is stable.
\end{theorem}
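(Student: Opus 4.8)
The plan is to show that the pair $(\Gamma,C_n)$, where $n:=2k+1\ge 5$, is stable by ruling out each way in which it could be unstable. Since $n$ is odd, $C_n$ is non-bipartite and, as $n\neq 4$, it is $R$-thin; hence $\Gamma\times C_n$ is connected and, being a direct product of $R$-thin graphs, $R$-thin. In particular all hypotheses of Theorem~\ref{th:thr3.7}(b) are met, and that theorem states that $(\Gamma,C_n)$ is nontrivially unstable if and only if $\Gamma$ has a nondiagonal $C_n$-automorphism. I therefore aim to prove: (A) $\Gamma$ has no nondiagonal $C_n$-automorphism, and (B) $\Gamma$ and $C_n$ are coprime. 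Granting these, Theorem~\ref{th:thr3.7}(b) and (A) show that $(\Gamma,C_n)$ is not nontrivially unstable; and were $(\Gamma,C_n)$ unstable then, since $\Gamma$ and $C_n$ are $R$-thin, connected and coprime (by (B)) with $C_n$ non-bipartite, the pair would meet the definition of a nontrivially unstable pair, a contradiction. Hence $(\Gamma,C_n)$ is stable.

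For (B), suppose to the contrary that $\Gamma$ and $C_n$ are not coprime. As $n$ is odd, $C_n$ is prime with respect to the direct product --- a connected $2$-regular graph admitting a nontrivial direct factorization must be of the form $K_2\times C_m$, hence of even order --- so a nontrivial common direct factor of $\Gamma$ and $C_n$ must be isomorphic to $C_n$; thus $C_n$ is a direct factor of $\Gamma$, that is, either $\Gamma\cong C_n$, or $\Gamma\cong\Gamma_1\times C_n$ for some nontrivial graph $\Gamma_1$. If $\Gamma\cong C_n$, then (as $n\ge 5$ is odd) $\Gamma^*$ is the nonempty distance-$2$ graph of $C_n$, and for every edge $\{u,v\}$ of $\Gamma^*$ the set $N_\Gamma(u)\cap N_\Gamma(v)$ is a single vertex, hence independent, contradicting the hypothesis. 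If $\Gamma\cong\Gamma_1\times C_n$, then $\Gamma$, being connected and nontrivial, has an edge, so $\Gamma_1$ has an edge and hence a vertex $a$ with $\deg_{\Gamma_1}(a)\ge 1$; for every $i\in\ZZ_n$ one computes $\deg_\Gamma((a,i))=\deg_\Gamma((a,i+2))=2\deg_{\Gamma_1}(a)$ and $N_\Gamma((a,i))\cap N_\Gamma((a,i+2))=N_{\Gamma_1}(a)\times\{i+1\}$, a set of $\deg_{\Gamma_1}(a)$ vertices, so that $\{(a,i),(a,i+2)\}\in E(\Gamma^*)$; but two vertices of $\Gamma\cong\Gamma_1\times C_n$ lying over a common vertex of $C_n$ are never adjacent, so $N_{\Gamma_1}(a)\times\{i+1\}$ is independent, again contradicting the hypothesis. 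Hence $\Gamma$ and $C_n$ are coprime.

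Step (A) is the crux, and I expect it to be the main obstacle. The defining relations of a $C_n$-automorphism $(\alpha_0,\dots,\alpha_{n-1})$, combined with the $R$-thinness of $\Gamma$, constrain how consecutive permutations $\alpha_i,\alpha_{i+1}$ may differ precisely in terms of the cyclic structure of the auxiliary graph $\Gamma^*$ --- this is why the quantities $L(u)$ appear in Definition~\ref{defn:comp}. I would argue that a nondiagonal $C_n$-automorphism forces a nonidentity permutation to be transported once around $C_n$ along a closed configuration carried by the edges of $\Gamma^*$, and that, $n$ being odd, such a configuration can close up only if some edge $\{u,v\}$ of $\Gamma^*$ has $N_\Gamma(u)\cap N_\Gamma(v)$ independent; the hypothesis forbids this, so no nondiagonal $C_n$-automorphism exists. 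Unlike (B), which is a direct structural computation, (A) demands careful bookkeeping of this propagation and of how a single $\Gamma^*$-edge with non-independent common neighbourhood breaks it; much of the needed analysis should already be contained in the proof of Theorem~\ref{th:thr3.7}(b), which establishes exactly this characterization of nontrivial instability of $(\Gamma,C_n)$ for odd $n\ge 5$ under the same hypothesis, so in the write-up a good part of (A) can likely be invoked rather than reproved.
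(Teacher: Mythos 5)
Your overall reduction is sound, and your step (B) is correct: it is a neat self-contained verification that the hypothesis on $\Gamma^*$ forces $\Gamma$ and $C_n$ to be coprime, in the same spirit as (though independent of) Remark \ref{rem:def}(vi). The problem is step (A), which you yourself flag as the crux and which you do not actually prove. Your proposed mechanism --- that a nondiagonal $C_n$-automorphism transports a permutation around $C_n$ along edges of $\Gamma^*$ and can ``close up'' only when some $\Gamma^*$-edge has an independent common neighbourhood, so that the hypothesis kills it --- misattributes the role of the $\Gamma^*$ hypothesis. In the paper that hypothesis is used (in Lemma \ref{le:ler3.3}(b)) to control arbitrary automorphisms of the product $\Gamma\times C_n$, i.e.\ to prove $\autgc=\pgc$; it plays no role in ruling out nondiagonal $C_n$-automorphisms of $\Gamma$. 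Consequently the analysis you hope to ``invoke rather than reprove'' from the proof of Theorem \ref{th:thr3.7}(b) concerns a different object and does not deliver (A).

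What actually closes the gap is that (A) holds unconditionally: for an $R$-thin connected graph $\Gamma$ and odd $n=2k+1$, \emph{every} $C_n$-automorphism of $\Gamma$ is diagonal, with no hypothesis on $\Gamma^*$ needed. The paper proves this by a short algebraic argument: given $(\alpha_1,\dots,\alpha_{2k+1})\in\aut_{C_{2k+1}}(\Gamma)$, act by the reflection of $C_{2k+1}$ (Lemma \ref{le:le1.1}) and multiply by the inverse of the resulting tuple to obtain a $C_{2k+1}$-automorphism with an identity entry; by Lemma \ref{re:re1.1}(a) such a tuple must be diagonal, which gives $\alpha_i=\alpha_{2k+3-i}$, and repeating the trick with a rotation of $C_{2k+1}$ forces $\alpha_1=\alpha_3=\cdots$, which for odd length wraps around to make all entries equal. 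If you replace your sketch of (A) by this argument (it is exactly the computation carried out in the paper's proofs of Theorems \ref{th:th3.4} and \ref{thm:coines}), your proof goes through; as written, (A) is a genuine gap and the route you indicate for filling it would not work.
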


Theorem \ref{th:thr3.7} is in the same spirit as \cite[Theorem 1.8]{qin211} where $\Gamma$ is regular and $\Sigma$ is vertex-transitive. Here in Theorem \ref{th:thr3.7} we do not need $\Gamma$ to be regular, but we assume $\Sigma$ is a cycle among other things. As will be seen in the last section, the ``only if" part in part (b) of Theorem \ref{co:co3.3} is not true if $2k$ is incompatible with $\Gamma$, and the statement in Theorem \ref{th:th3.4} is not true if $2k+1$ is incompatible with $\Gamma$.

The rest of this paper is organized as follows. In Section~\ref{sec:sec2}, we will prove several preliminary results on $\Sigma$-automorphisms of $\Gamma$ for a general pair $(\Gamma, \Sigma)$, thereby enriching the theories of $\Sigma$-automorphisms developed in \cite{qin211}. We will also give in Section~\ref{sec:sec2} an example and a remark to explain the concepts introduced in Definition \ref{defn:comp}. The proof of Theorem \ref{th:thr3.7} consists of three lemmas which will be proved in Section~\ref{sec:sec3}. The proofs of Theorems \ref{co:co3.3}, \ref{th:th3.4} and \ref{thm:coines} will be given in Section \ref{sec:uaut} with the help of some known results from \cite{lauri15, wilson08} and a few technical lemmas produced in the present paper. The last section, Section \ref{sec:con-rem}, consists of some remarks on our main results and three conjectures in relation to Theorems \ref{th:thr3.7}, \ref{th:th3.4} and \ref{thm:coines}.

\section{Preliminaries}
\label{sec:sec2}

We begin this section with an example to illustrate Definition \ref{defn:comp} and a remark to give more explanations on the auxiliary graph $\Gamma^*$ and the compatibility of $n$ with $\Gamma$.

\begin{exam}
\label{ex:exin}
Consider the graph $\Gamma$ on the left-hand side of Figure \ref{fig:gammastar}. Clearly, for any $u,v \in \{u_1,u_3,u_5\}$ and any $u,v \in \{u_2,u_4,u_6\}$, we have $\deg(u) = \deg(v) = 2|N_\Gamma(u)\cap N_\Gamma(v)|$. Thus $\Gamma^*$ has two components each isomorphic to $C_3$. Hence $L(u) = \{3\}$ for all $u \in V(\Gamma)$. So $3$ and $6$ are incompatible with $\Gamma$, while every other integer $n \ge 3$ is compatible with $\Gamma$.
\end{exam}

\begin{figure}[ht]
\centering
\vspace{-0.2cm}
\includegraphics*[height=7.0cm]{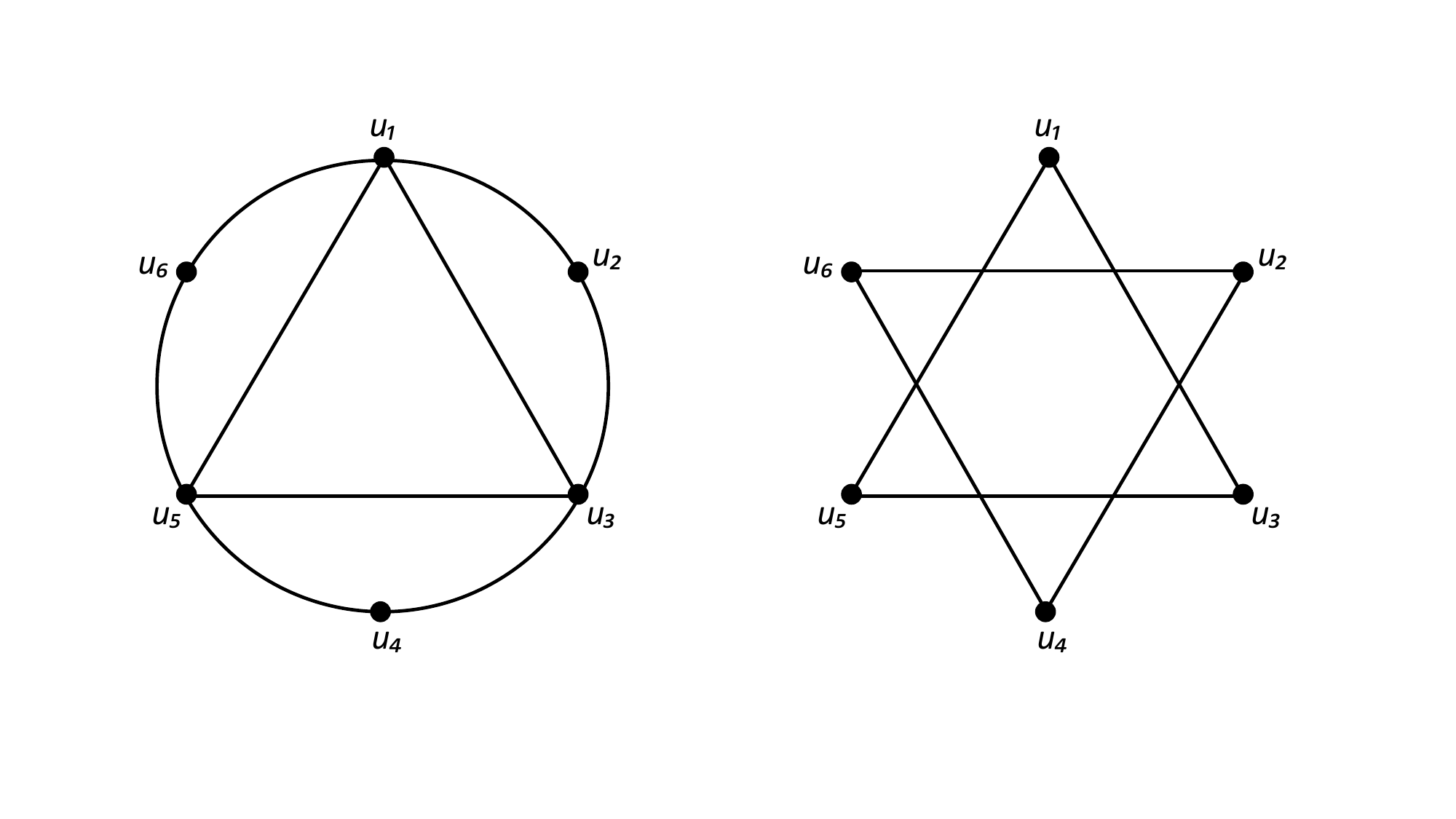}
\vspace{-1cm}
\caption{\small A graph $\Gamma$ (left) and its auxiliary graph $\Gamma^*$ (right).}
\label{fig:gammastar}
\end{figure}

\delete
{
\begin{center}
\begin{tikzpicture}
\coordinate[label=left:$u_{1}$] (a1) at (-2,0);
\coordinate[label=left:$u_{2}$] (a2) at (-1,2);
\coordinate[label=left:$u_{3}$] (a3) at (0,0);
\coordinate[label=right:$u_{4}$] (a4) at (1,2);
\coordinate[label=left:$u_{5}$] (a5) at (2,0);
\coordinate[label=right:$u_{6}$] (a6) at (0,-2);
\foreach \i in {a1, a3,a2,a4,a5,a6} {\draw (\i) circle (2pt);}
\draw (a1) -- (a2)--(a3)--(a4)--(a5)--(a6)--(a1);
\draw (0,0) arc [radius = 1.4, start angle = 45, end angle = 135];
\draw (2,0) arc [radius = 1.4, start angle = 45, end angle = 135];
\draw (2,0) arc [radius = 2.8, start angle = 315, end angle = 225];
\node [below] at (0,-2.2) {Fig. 1  The graph $\Gamma$};
\end{tikzpicture}
\end{center}
}

\begin{remark}
\label{rem:def}
(i) Note that adjacent vertices of $\Gamma^*$ may or may not be adjacent in $\Gamma$, and any two isolated vertices of $\Gamma$ are adjacent in $\Gamma^*$. Note also that adjacent vertices of $\Gamma^*$ have the same degree in $\Gamma$, and this degree is even.

(ii) If a vertex $u$ has degree at least $2$ in $\Gamma^*$ but is not contained in any cycle of $\Gamma^*$, then $L(u) = \emptyset$.

(iii) If $\Gamma^*$ has minimum degree at least $2$, then every vertex $u \in V(\Gamma)$ is contained in at least one cycle in $\Gamma^*$. Hence $L(u) \neq \emptyset$ and each member of $L(u)$  is an integer no less than $3$.

(iv) If there exists a vertex $u \in V(\Gamma)$ such that $L(u) = \emptyset$, then every integer $n \ge 3$ is compatible with $\Gamma$.

(v) If the minimum degree of $\Gamma^*$ is $0$ or $1$, then every integer $n \ge 3$ is compatible with $\Gamma$. In particular, if $\Gamma$ has a vertex $u$ of odd degree, then $u$ is an isolated vertex of $\Gamma^*$ and hence every integer $n \ge 3$ is compatible with $\Gamma$.

(vi) If an integer $n \ge 3$ is compatible with $\Gamma$, then either $\Gamma$ and $C_n$ are coprime, or $\Gamma$ is bipartite and $n \equiv 2\pmod{4}$. To prove this statement, we assume $n \ge 3$ is compatible with $\Gamma$ and $\Gamma$ and $C_n$ are not coprime. We aim to prove $\Gamma$ is bipartite and $n \equiv 2\pmod{4}$. Suppose to the contrary that $n$ is odd or $n=4k+4$ for some $k\geq 0$. Then $C_n$ is prime with respect to the direct product. Since $\Gamma$ and $C_n$ are not coprime, it follows that $\Gamma=\Sigma\times C_n$ for some nontrivial graph $\Sigma$. Set $V(C_n)=\{0,1,\ldots, n-1\}$. For any $u\in V(\Sigma)$ and $i\in V(C_n)$, we have $|N_{\Gamma}((u,i))\cap N_{\Gamma}((u,i+2))| = (1/2)\ |N_{\Gamma}((u,i))|$. So, if $n$ is odd, then $(u,0), (u,2), \ldots, (u, n-1), (u,1),\ldots, (u,n-2), (u, 0)$ is a cycle in $\Gamma^*$  with length $n$ and hence $n\in L((u,i))$ for any $(u,i)\in V(\Gamma)$. If $n=4k+4$ for some $k\geq 0$, then $(u,0), (u,2), \ldots, (u, n-4), (u,n-2), (u, 0)$ is a cycle in $\Gamma^*$ with length $n/2$ and hence $n/2\in L((u,i))$ for any $(u,i)\in V(\Gamma)$. In either case we obtain that $n$ is incompatible with $\Gamma$, but this contradicts our assumption. Thus, $n=4k+2$ for some $k\geq 1$, and hence $C_n=C_{2k+1}\times K_2$. If $\Gamma=\Sigma\times C_{2k+1}$  for some nontrivial graph $\Sigma$, then similarly to the case $n=4k+4$ above we obtain that $n$ is incompatible with $\Gamma$, a contradiction. Hence $\Gamma$ and $C_{2k+1}$ are coprime. Since $\Gamma$ and $C_n$ are not coprime, we must have $\Gamma=\Sigma\times K_2$ for some nontrivial graph $\Sigma$ and therefore $\Gamma$ is bipartite.
\end{remark}

Let $\Gamma$ and $\Sigma$ be graphs with $V(\Sigma)=\{1, \ldots, n\}$, and let $\phi \in \aut(\Sigma)$. Then
$$
(\alpha_1, \ldots,\alpha_n)^\phi = (\alpha_{1^\phi}, \ldots,\alpha_{n^\phi}),\; \text{for $(\aln)\in \auts(\Gamma)$}
$$
defines a bijection from $\auts(\Gamma)$ to $\auts(\Gamma)$.

\begin{lemma}
\label{le:le1.1}
Let $\Gamma$ and $\Sigma$ be connected graphs with $V(\Sigma)=\{1, \ldots, n\}$. Then for any $(\alpha_1, \ldots, \alpha_n)\in \auts(\Gamma)$ and $\phi\in\aut(\Sigma)$ we have $(\aln)^\phi \in \auts(\Gamma)$. Moreover, $\aut(\Sigma)$ is a subgroup of the automorphism group of $\auts(\Gamma)$.
\end{lemma}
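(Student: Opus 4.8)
First I would check that $(\alpha_1,\dots,\alpha_n)^\phi \in \auts(\Gamma)$; the only substantive input is that an automorphism $\phi$ of $\Sigma$ preserves $E(\Sigma)$. Write $\beta_i = \alpha_{i^\phi}$ for $i = 1,\dots,n$, so that $(\beta_1,\dots,\beta_n) = (\alpha_1,\dots,\alpha_n)^\phi$; each $\beta_i$ is a permutation of $V(\Gamma)$, being one of the $\alpha_k$. Fix an edge $\{i,j\}$ of $\Sigma$ and vertices $u,v \in V(\Gamma)$. Since $\phi \in \aut(\Sigma)$, the pair $\{i^\phi, j^\phi\}$ is again an edge of $\Sigma$, so applying the defining property of $(\alpha_1,\dots,\alpha_n) \in \auts(\Gamma)$ to this edge gives that $\{u,v\} \in E(\Gamma)$ if and only if $\{u^{\alpha_{i^\phi}}, v^{\alpha_{j^\phi}}\} \in E(\Gamma)$, i.e. if and only if $\{u^{\beta_i}, v^{\beta_j}\} \in E(\Gamma)$. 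As $\{i,j\}$ and $u,v$ were arbitrary, $(\beta_1,\dots,\beta_n)$ satisfies the definition of a $\Sigma$-automorphism, so $(\alpha_1,\dots,\alpha_n)^\phi \in \auts(\Gamma)$.

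For the second assertion, for each $\phi \in \aut(\Sigma)$ let $\theta_\phi \colon \auts(\Gamma) \to \auts(\Gamma)$, $(\alpha_1,\dots,\alpha_n) \mapsto (\alpha_1,\dots,\alpha_n)^\phi$, which is well defined by the first part. I would then record three routine facts. (1) $\theta_\phi$ is a group homomorphism: since the product in $\auts(\Gamma)$ is taken coordinatewise, the $i$-th coordinate of $\theta_\phi$ applied to $(\alpha_1,\dots,\alpha_n)(\beta_1,\dots,\beta_n) = (\alpha_1\beta_1,\dots,\alpha_n\beta_n)$ is $\alpha_{i^\phi}\beta_{i^\phi}$, which is the $i$-th coordinate of $\theta_\phi(\alpha_1,\dots,\alpha_n)\,\theta_\phi(\beta_1,\dots,\beta_n)$. (2) $\theta_\phi \circ \theta_\psi = \theta_{\phi\psi}$ for all $\phi,\psi \in \aut(\Sigma)$: the $i$-th coordinate of $\theta_\phi(\theta_\psi(\alpha_1,\dots,\alpha_n))$ is $\alpha_{(i^\phi)^\psi} = \alpha_{i^{\phi\psi}}$. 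In particular $\theta_{\mathrm{id}}$ is the identity map, so $\theta_\phi$ is a bijection with inverse $\theta_{\phi^{-1}}$, and together with (1) this shows $\theta_\phi \in \mathrm{Aut}(\auts(\Gamma))$. (3) Combining (1) and (2), $\phi \mapsto \theta_\phi$ is a group homomorphism from $\aut(\Sigma)$ into $\mathrm{Aut}(\auts(\Gamma))$; its image is the copy of $\aut(\Sigma)$ inside the automorphism group of $\auts(\Gamma)$ asserted by the lemma, acting by $(\alpha_1,\dots,\alpha_n) \mapsto (\alpha_{1^\phi},\dots,\alpha_{n^\phi})$.

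I do not anticipate a genuine obstacle here, since the statement is essentially formal. The only points that need care are: invoking the hypothesis $\phi \in \aut(\Sigma)$ — rather than merely that $\phi$ is a permutation of $V(\Sigma)$ — exactly once, to pass from $\{i,j\} \in E(\Sigma)$ to $\{i^\phi, j^\phi\} \in E(\Sigma)$ in the first step; and keeping the exponent convention $i^{\phi\psi} = (i^\phi)^\psi$ consistent throughout so that $\phi \mapsto \theta_\phi$ comes out as a homomorphism and not an anti-homomorphism. (Connectedness of $\Gamma$ and $\Sigma$ is not actually used in the argument, but we keep these hypotheses to match the standing setting of this section.)
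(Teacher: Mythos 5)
Your proposal is correct and follows essentially the same route as the paper: the first claim is verified by using $\phi\in\aut(\Sigma)$ exactly once to pass from an edge $\{i,j\}$ of $\Sigma$ to the edge $\{i^\phi,j^\phi\}$, and the second by the coordinatewise computation showing $\theta_\phi$ respects the product on $\auts(\Gamma)$. Your additional checks that $\theta_\phi\circ\theta_\psi=\theta_{\phi\psi}$ (hence that each $\theta_\phi$ is bijective and that $\phi\mapsto\theta_\phi$ respects products) are details the paper leaves implicit, so your write-up is, if anything, slightly more complete than the published one.
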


\begin{proof}
Since $\phi\in\aut(\Sigma)$, for all $i', j' \in V(\Sigma)$, there exist $i, j \in V(\Sigma)$ such that $i' = i^{\phi}$ and $j' = j^{\phi}$. Moreover, $(i, j)$ and $(i', j')$ determine each other uniquely, and $\{i' ,j'\} \in E(\Sigma)$ if and only if $\{i, j\} \in E(\Sigma)$. Since $(\alpha_1, \ldots, \alpha_n)\in \auts(\Gamma)$, for any $u, v \in V(\Gamma)$, $\{u, v\} \in E(\Gamma)$ if and only if $\{u^{\alpha_{i'}}, v^{\alpha_{j'}}\} \in E(\Gamma)$ for any $i', j' \in V(\Sigma)$ with $\{i',j'\} \in E(\Sigma)$. That is, $\{u, v\} \in E(\Gamma)$ if and only if $\{u^{\alpha_{i^\phi}}, v^{\alpha_{j^\phi}}\} \in E(\Gamma)$ for any $i^\phi, j^\phi \in V(\Sigma)$ with $\{i^\phi ,j^\phi\} \in E(\Sigma)$. Therefore, $(\aln)^\phi \in \auts(\Gamma)$.

Now for any $\phi \in \aut(\Sigma)$ and $(\aln), (\ben) \in \auts(\Gamma)$, set $\gamma_i = \alpha_i\beta_i$ and $i' = i^\phi$ for $i \in V(\Sigma)$. Since $\phi \in \aut(\Sigma)$, when $i$ ranges over all vertices of $\Sigma$ so does $i'$. Hence
\begin{align*}
(\aln)^\phi (\beta_1, \ldots, \beta_n)^\phi &=\,(\alpha_{1^\phi}, \ldots,\alpha_{n^\phi})(\beta_{1^\phi}, \ldots, \beta_{n^\phi})\\
&=\,(\alpha_{1'}\beta_{1'}, \ldots,\alpha_{n'}\beta_{n'})\\
&=\,(\gamma_{1'}, \ldots, \gamma_{n'})\\
&=\,(\gamma_{1^\phi}, \ldots, \gamma_{n^\phi})\\
&=\,(\gamma_{1}, \ldots, \gamma_{n})^{\phi}\\
&=\,(\alpha_{1}\beta_{1}, \ldots,\alpha_{n}\beta_{n})^\phi\\
&=\,((\alpha_{1}, \ldots,\alpha_{n})(\beta_{1}, \ldots,\beta_{n}))^\phi.
\end{align*}
This means that $\aut(\Sigma)$ is a subgroup of the automorphism group of $\auts(\Gamma)$.
\end{proof}

In \cite[Proposition 3.2]{lauri11}, Lauri \textit{et al.} proved that for any two-fold automorphism $(\alpha_1, \alpha_2)$ of a graph $\Gamma$, either both $\alpha_1$ and $\alpha_2$ are automorphisms of $\Gamma$, or neither of them is. The following is a generalization of this result.

\begin{lemma}\label{le:le1.2}
Let $\Gamma$ and $\Sigma$ be connected graphs with $V(\Sigma)=\{1, \ldots, n\}$. Then for any $(\alpha_1, \ldots, \alpha_n)\in \auts(\Gamma)$, either $\alpha_i \in \aut(\Gamma)$ for all $1\leq i\leq n$ or $\alpha_i \notin \aut(\Gamma)$ for all $1\leq i\leq n$.
\end{lemma}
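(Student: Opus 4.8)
The statement to prove is Lemma~\ref{le:le1.2}: for connected $\Gamma, \Sigma$ and any $\Sigma$-automorphism $(\alpha_1,\ldots,\alpha_n)$ of $\Gamma$, either all $\alpha_i$ lie in $\aut(\Gamma)$ or none of them does.

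\medskip

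\textbf{Plan of proof.} The plan is to show that the property ``$\alpha_i \in \aut(\Gamma)$'' propagates along edges of $\Sigma$, and then invoke connectedness of $\Sigma$. So the first step is to fix an edge $\{i,j\} \in E(\Sigma)$ and prove the local claim: $\alpha_i \in \aut(\Gamma)$ if and only if $\alpha_j \in \aut(\Gamma)$. This is essentially the two-fold case \cite[Proposition 3.2]{lauri11}, but I would reprove it directly from the definition of a $\Sigma$-automorphism. Recall that $(\alpha_1,\ldots,\alpha_n) \in \auts(\Gamma)$ means: for all $u,v \in V(\Gamma)$ and all edges $\{k,\ell\}$ of $\Sigma$, we have $\{u,v\} \in E(\Gamma) \iff \{u^{\alpha_k}, v^{\alpha_\ell}\} \in E(\Gamma)$. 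Applying this to the single edge $\{i,j\}$ gives: $\{u,v\} \in E(\Gamma) \iff \{u^{\alpha_i}, v^{\alpha_j}\} \in E(\Gamma)$ for all $u,v$ (and, swapping the roles of $i$ and $j$ via the same edge, also $\{u,v\}\in E(\Gamma) \iff \{u^{\alpha_j},v^{\alpha_i}\}\in E(\Gamma)$, which is just the symmetric restatement).

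\medskip

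\textbf{The local step.} Assume $\alpha_i \in \aut(\Gamma)$; I want $\alpha_j \in \aut(\Gamma)$. Take any $x,y \in V(\Gamma)$; I must show $\{x,y\}\in E(\Gamma) \iff \{x^{\alpha_j}, y^{\alpha_j}\}\in E(\Gamma)$. Since $\alpha_i$ is a bijection, write $x = u^{\alpha_i}$ for a unique $u$, i.e. $u = x^{\alpha_i^{-1}}$. Then by the edge-$\{i,j\}$ property, $\{x, y^{\alpha_j}\} = \{u^{\alpha_i}, y^{\alpha_j}\} \in E(\Gamma) \iff \{u, y\}\in E(\Gamma) \iff \{u^{\alpha_i}, y\}\in E(\Gamma)$, where the last equivalence uses $\alpha_i \in \aut(\Gamma)$. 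Hmm — I need to be a bit more careful to land on $\{x^{\alpha_j}, y^{\alpha_j}\}$. The cleaner route: since $\alpha_i \in \aut(\Gamma)$, the map $\beta_k := \alpha_i^{-1}\alpha_k$ gives another $\Sigma$-automorphism $(\beta_1,\ldots,\beta_n)$ with $\beta_i = \id$ — indeed precomposing every coordinate by a fixed automorphism preserves the defining condition, which can be checked directly or cited as a special case of the group structure of $\auts(\Gamma)$ (it is literally $(\alpha_i^{-1},\ldots,\alpha_i^{-1})\cdot(\alpha_1,\ldots,\alpha_n)$ once one notes the constant tuple of an automorphism is a diagonal $\Sigma$-automorphism). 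Then $\beta_i = \id \in \aut(\Gamma)$, and it suffices to prove $\beta_j \in \aut(\Gamma)$, since $\alpha_j = \alpha_i \beta_j$ will then be a product of two automorphisms. Now the edge-$\{i,j\}$ condition for $(\beta_k)$ reads $\{u,v\}\in E(\Gamma) \iff \{u^{\beta_i}, v^{\beta_j}\} = \{u, v^{\beta_j}\}\in E(\Gamma)$ for all $u,v$. Fix any $x, y$; choosing $v = y$ and letting $u$ range, this says: for every $u$, $u$ is adjacent to $y$ iff $u$ is adjacent to $y^{\beta_j}$. That means $N_\Gamma(y) = N_\Gamma(y^{\beta_j})$ for every $y$. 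Since $\beta_j$ is a bijection, this in particular shows $\{x,y\}\in E(\Gamma) \iff x \in N_\Gamma(y) = N_\Gamma(y^{\beta_j}) \iff \{x, y^{\beta_j}\} \in E(\Gamma)$; and applying the same with $x$ in place of $y$, $\{x,y\}\in E(\Gamma) \iff \{x^{\beta_j}, y\}\in E(\Gamma)$, hence combining, $\{x,y\}\in E(\Gamma)\iff \{x^{\beta_j}, y^{\beta_j}\}\in E(\Gamma)$. Thus $\beta_j \in \aut(\Gamma)$, so $\alpha_j \in \aut(\Gamma)$. By symmetry ($\alpha_i$ and $\alpha_j$ play interchangeable roles across the edge $\{i,j\}$), the converse direction is identical. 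This establishes: $\alpha_i \in \aut(\Gamma) \iff \alpha_j \in \aut(\Gamma)$ whenever $\{i,j\} \in E(\Sigma)$.

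\medskip

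\textbf{Globalization and the main obstacle.} Finally, since $\Sigma$ is connected, any two vertices $i, j$ of $\Sigma$ are joined by a path $i = i_0, i_1, \ldots, i_m = j$; applying the local step along consecutive edges gives $\alpha_i \in \aut(\Gamma) \iff \alpha_j \in \aut(\Gamma)$ for all $i,j$, which is exactly the claim. The only mild subtlety — and the one place I would slow down — is verifying that precomposition by a diagonal automorphism preserves being a $\Sigma$-automorphism, i.e. that $(\alpha_i^{-1},\ldots,\alpha_i^{-1})$ is a (diagonal) $\Sigma$-automorphism and hence $(\beta_1,\ldots,\beta_n) \in \auts(\Gamma)$; this is routine from the definition (for $\gamma \in \aut(\Gamma)$ and any edge $\{k,\ell\}$ of $\Sigma$, $\{u^{\gamma^{-1}}, v^{\gamma^{-1}}\}\in E(\Gamma)\iff\{u,v\}\in E(\Gamma)$), but it is the hinge of the argument. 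Alternatively one can avoid this reduction entirely and argue the edge step by the direct computation sketched above using $u = x^{\alpha_i^{-1}}$; either way the real content is the single-edge equivalence, and connectedness of $\Sigma$ does the rest. There is no serious obstacle; the proof is short once the local step is set up cleanly.
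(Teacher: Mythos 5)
Your proof is correct and follows essentially the same strategy as the paper's: use the group structure of $\auts(\Gamma)$ and the diagonal tuple $(\alpha_i^{-1},\ldots,\alpha_i^{-1})$ to normalize one coordinate to the identity, deduce that the coordinates at $\Sigma$-neighbours of $i$ are automorphisms of $\Gamma$, and propagate by connectedness of $\Sigma$. Your execution is in fact slightly cleaner: the single-edge step via $N_{\Gamma}(y)=N_{\Gamma}(y^{\beta_j})$ yields both directions of the equivalence at once (the paper instead shows $\alpha_j$ maps edges to edges and invokes finiteness of $\Gamma$ to handle non-edges), and re-normalizing at each edge of a path avoids the paper's auxiliary tuples $(\beta_1,\ldots,\beta_n)$ supported on $N_{\Sigma}(i)$.
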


\begin{proof}
Suppose that $\alpha_i \in \aut(\Gamma)$ for some $i$. Then $(\alpha^{-1}_i, \ldots, \alpha^{-1}_i) \in \auts(\Gamma)$. Since $(\alpha_1, \ldots, \alpha_n)\in \auts(\Gamma)$ and $\auts(\Gamma)$ is a group, we have $(\alpha_1\alpha^{-1}_i,  \ldots, \alpha_i \alpha^{-1}_i, \ldots, \alpha_n\alpha^{-1}_i) \in \auts(\Gamma)$. Since $\alpha_i \alpha^{-1}_i = \id$, this shows that we may assume without loss of generality that $\alpha_i = \id$ in $(\alpha_1, \ldots, \alpha_n)\in \auts(\Gamma)$. Then for any $u, v \in V(\Gamma)$ and $j \in N_{\Sigma}(i)$, $\{u, v\} \in E(\Gamma)$ if and only if $\{u^{\alpha_i}, v^{\alpha_j}\} = \{u, v^{\alpha_j}\} \in E(\Gamma)$. Moreover, if $\{u,v^{\alpha_j}\} \in E(\Gamma)$, then $\{v^{\alpha_j}, u\} \in E(\Gamma)$ and hence $\{(v^{\alpha_j})^{\alpha_i}, u^{\alpha_j}\}=\{v^{\alpha_j}, u^{\alpha_j}\} \in E(\Gamma)$. So we have proved that $\{u,v\}\in E(\Gamma)$ implies $\{u^{\alpha_j}, v^{\alpha_j}\} \in E(\Gamma)$. In other words, $\alpha_j$ maps edges of $\Gamma$ to edges of $\Gamma$. Since $\alpha_j$ is a permutation of $V(\Gamma)$, we obtain further that $\alpha_j$ maps different edges of $\Gamma$ to different edges of $\Gamma$. So $\{u,v\}\notin E(\Gamma)$ implies $\{u^{\alpha_j}, v^{\alpha_j}\}\notin E(\Gamma)$. Therefore, $\alpha_j \in \aut(\Gamma)$ for all $j \in N_{\Sigma}(i)$.

If $N_{\Sigma}(j)\subseteq N_{\Sigma}(i)$ for any vertex $j\in N_{\Sigma}(i)$, then $N_{\Sigma}(i)\cup\{i\}=V(\Sigma)$ and so $\alpha_j\in \aut(\Gamma)$ for all $j\in N_{\Sigma}(i)$. We now consider the case $V(\Sigma) \setminus N_{\Sigma}(i)\neq \{i\}$. Define $(\ben)$ such that $\beta_j = \alpha_j$ for $j \in N_{\Sigma}(i)$ and $\beta_j = \id$ for $j \in V(\Sigma) \setminus N_{\Sigma}(i)$. Then for any $u, v \in V(\Gamma)$ and $\{k, l\} \in E(\Sigma)$, $\{u^{\beta_{k}}, v^{\beta_{l}}\}$ is equal to $\{u, v\}$, $\{u^{\alpha_{k}}, v\}$, $\{u, v^{\alpha_{l}}\}$ or $\{u^{\alpha_{k}}, v^{\alpha_{l}}\}$, depending on whether $k \in N_{\Sigma}(i)$ and/or $l \in N_{\Sigma}(i)$. This together with what we proved in the previous paragraph implies that $\{u, v\} \in E(\Gamma)$ if and only if $\{u^{\beta_{k}}, v^{\beta_{l}}\} \in E(\Gamma)$. Hence $(\ben) \in \auts(\Gamma)$. Since $\auts(\Gamma)$ is a group, we then have $(\beta_1^{-1}, \ldots, \beta_n^{-1}) \in \auts(\Gamma)$ and consequently $(\alpha_1\beta_1^{-1}, \ldots, \alpha_n\beta_n^{-1}) \in \auts(\Gamma)$. Note that $\alpha_j \beta^{-1}_{j}=\id$ for $j \in N_{\Sigma}(i)$ and $\alpha_j \beta^{-1}_{j}=\alpha_j$ for $j \in V(\Sigma) \setminus N_{\Sigma}(i)$. Thus, for any $\{u,v\}\in E(\Gamma)$ and any $j\in N_{\Sigma}(i)$ and $l\in N_{\Sigma}(j)$,  $\{u,v\}\in E(\Gamma)$ if and only if $\{u^{\alpha_j\beta_j^{-1}},v^{\alpha_l\beta_l^{-1}}\}=\{u,v^{\alpha_l}\} \in E(\Gamma)$. On the other hand, by $\{u, v^{\alpha_l}\}\in E(\Gamma)$ we have $\{v^{\alpha_l},u\}\in E(\Gamma)$ and so $\{(v^{\alpha_l})^{\alpha_j\beta_j^{-1}}, u^{\alpha_l\beta_l^{-1}}\}=\{v^{\alpha_l},u^{\alpha_l}\}\in E(\Gamma)$. Thus $\{u,v\}\in E(\Gamma)$ implies $\{u^{\alpha_l}, v^{\alpha_l}\}\in E(\Gamma)$ for all $l\in N_{\Sigma}(j)$ with $j\in N_{\Sigma}(i)$. That is, $\alpha_l$ maps edges of $\Gamma$ to edges of $\Gamma$. Since $\alpha_l$ is a permutation of $V(\Gamma)$, it must map different edges of $\Gamma$ to different edges of $\Gamma$. Thus $\{u, v\}\notin E(\Gamma)$ implies $\{u^{\alpha_l}, v^{\alpha_l}\}\notin E(\Gamma)$. Therefore, $\alpha_l\in \aut(\Gamma)$ for all $l \in N_{\Sigma}(j)$ with $j \in N_{\Sigma}(i)$. Since $\Sigma$ is a finite connected graph, by repeating the precess a finite number of times we obtain that $\alpha_j \in \aut(\Gamma)$ for all $1\leq j \leq n$.
\end{proof}

\delete
{
\begin{lemma}
\label{pro:pro2.4}
Let $\Gamma$ and $\Sigma$ be  connected graphs with $V(\Sigma)=\{1,\ldots,n\}$, and let $(\aln) \in  \auts(\Gamma)$ be a nondiagonal $\Sigma$-automorphism of $\Gamma$. If one of the following conditions holds, then $\Gamma$ is not $R$-thin:
\begin{enumerate}
\item
\label{pr:pr1.5}
$\alpha_i \in \aut(\Gamma)$ for at least one $1 \le i \le n$;
\item
\label{pr:pr1.6}
there exist $1 \le i, j \le n$ such that $\alpha_i$ and $\alpha_j$ have different orders;
\item
\label{le:le1.3}
there exists $\{i,j\} \in E(\Sigma)$ such that $\alpha_i=\alpha_j$.
\end{enumerate}
\end{lemma}

\begin{proof}
(\ref{pr:pr1.5})
Since $(\aln) \in \auts(\Gamma)$ and $\alpha_i \in \aut(\Gamma)$ for at least one $1 \le i \le n$, by Lemma~\ref{le:le1.2} we have $\alpha_i \in \aut(\Gamma)$ for all $1 \le i \le n$. Since $\Sigma$ is connected and $(\aln)$ is nondiagonal, there exists an edge $\{i,j\} \in E(\Sigma)$ such that $\alpha_i\neq\alpha_j$ and hence $\gamma := \alpha_j\alpha^{-1}_i$ is a non-identity element of $\aut(\Gamma)$. So $\gamma$ moves at least one vertex of $\Gamma$, say, $u$, so that $v := u^{\gamma} \ne u$. Assume without loss of generality that $i < j$. Then $(\alpha_1\alpha_i^{-1},\ldots, \alpha_i\alpha^{-1}_i, \ldots, \alpha_j\alpha_i^{-1},\ldots, \alpha_n\alpha_i^{-1}) \in \auts(\Gamma)$ by Lemma~\ref{le:le1.1}. For each $w \in N_\Gamma(u)$, we have $\{w, u^{\gamma}\}=\{w,v\}$. Hence each neighbour $w$ of $u$ in $\Gamma$ is also a neighbour of $v$ in $\Gamma$. So $N_\Gamma(u) \subseteq N_\Gamma(v)$. By Lemma \ref{le:le1.1} again, we have $((a_1a_i^{-1})^{-1},\ldots,(a_ia_i^{-1})^{-1},\ldots,(a_ja_i^{-1})^{-1},\ldots,(a_na_i^{-1})^{-1})\in\auts(\Gamma)$. Since $\gamma^{-1}=(\alpha_j\alpha_i^{-1})^{-1}$, we obtain that $\{w,v^{\gamma^{-1}}\}=\{w,u\} \in E(\Gamma)$ for any $w\in N_{\Gamma}(v)$. In other words, $N_\Gamma(v)\subseteq N_\Gamma(u)$. Therefore, $N_\Gamma(u) = N_\Gamma(v)$ and thus $\Gamma$ is not $R$-thin.

(\ref{pr:pr1.6})
Since $\Sigma$ is connected, the assumption that $\alpha_i$ and $\alpha_j$ have different orders for some $1 \le i, j \le n$ implies that there exist $1 \le i, j \le n$ such that $\{i, j\} \in E(\Sigma)$ and $\alpha_i$ and $\alpha_j$ have different orders. Without loss of generality we may assume $\{1,n\} \in E(\Sigma)$ and $\alpha_1, \alpha_n$ have orders $p, q$, respectively, for some $p < q$. Since $(\aln) \in  \auts(\Gamma)$, we have $(\alpha_1^p,\alpha_2^p,\ldots,\alpha^p_n) = (\id,\alpha_2^p,\ldots,\alpha^p_n)  \in \auts(\Gamma)$. Moreover, $(\id,\alpha_2^p,\ldots,\alpha^p_n)$ is nondiagonal as $\alpha^p_n \ne \id$. Since $\id \in \aut(\Gamma)$, it follows from part~(\ref{pr:pr1.5}) that $\Gamma$ is not $R$-thin.

(\ref{le:le1.3})
Assume $\alpha_i=\alpha_j$ for some $\{i,j\} \in E(\Sigma)$. Since $(\aln) \in \auts(\Gamma)$, for any $\{u,v\} \in E(\Gamma)$, we have $\{u^{\alpha_i}, v^{\alpha_i}\} = \{u^{\alpha_i}, v^{\alpha_j}\} \in E(\Gamma)$. Hence $\alpha_i \in \aut(\Gamma)$ and thus $\Gamma$ is not $R$-thin by part~(\ref{pr:pr1.5}).
\end{proof}

We can partition $\auts(\Gamma)$ into the following three parts:
\[\auts^{=}(\Gamma)=\{(\alpha,\ldots, \alpha)\in \auts(\Gamma):\alpha\in \aut(\Gamma)\}\]
\[\auts^{\text{NI}}(\Gamma)=\{(\aln) \in\auts(\Gamma): \alpha_i\notin \aut(\Gamma) \text{ for all } 1\leq i\leq n\}\]
\[\auts^{\text{I}}(\Gamma)=\{(\aln) \in \auts(\Gamma) \setminus \auts^{=}(\Gamma): \alpha_i\in \aut(\Gamma) \text{ for at least one } 1\leq i\leq n\}.\]
Lemma \ref{pro:pro2.4} implies the following result.
}

\begin{lemma}
\label{re:re1.1}
Let $\Gamma$ and $\Sigma$ be connected graphs with $V(\Sigma)=\{1,\ldots,n\}$. If $\Gamma$ is $R$-thin, then the following statements hold for every nondiagonal $(\aln) \in \auts(\Gamma)$:
\begin{enumerate}
\item
$\alpha_i \notin \aut(\Gamma)$ for $1 \le i \le n$;
\item
$\alpha_i$ for $1 \le i \le n$ all have the same order;
\item
$\alpha_i \neq \alpha_j$ for every $\{i,j\} \in E(\Sigma)$.
\end{enumerate}
\end{lemma}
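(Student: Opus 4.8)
The statement is a natural companion to Lemma~\ref{le:le1.2}: that lemma says the components of a $\Sigma$-automorphism are either all in $\aut(\Gamma)$ or all outside it, and here the $R$-thin hypothesis is what rules out the ``all inside'' alternative for a nondiagonal tuple; parts (b) and (c) will then follow formally from part (a). So the plan is to prove (a) first and bootstrap the rest.

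For (a), I would argue by contradiction: suppose some $\alpha_i\in\aut(\Gamma)$. By Lemma~\ref{le:le1.2} every component $\alpha_k$ lies in $\aut(\Gamma)$. Since $\Sigma$ is connected and $(\aln)$ is nondiagonal, the function $k\mapsto\alpha_k$ is nonconstant, so there is an edge $\{i,j\}\in E(\Sigma)$ with $\alpha_i\neq\alpha_j$. Because $\auts(\Gamma)$ is a group and the diagonal tuple $(\alpha_i^{-1},\ldots,\alpha_i^{-1})$ lies in it (its common entry being an automorphism), right-multiplying $(\aln)$ by it yields a $\Sigma$-automorphism $(\beta_1,\ldots,\beta_n)$ with $\beta_i=\id$ and $\beta_j=\gamma:=\alpha_j\alpha_i^{-1}\neq\id$. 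Choosing a vertex $u$ moved by $\gamma$ and setting $v=u^{\gamma}\neq u$, I would apply the $\Sigma$-automorphism condition along the edge $\{i,j\}$ to the pairs $\{w,u\}$ with $w\in N_{\Gamma}(u)$ to get $N_{\Gamma}(u)\subseteq N_{\Gamma}(v)$, and apply the same reasoning to $(\beta_1^{-1},\ldots,\beta_n^{-1})\in\auts(\Gamma)$ (along the same edge) to get the reverse inclusion. Then $N_{\Gamma}(u)=N_{\Gamma}(v)$ with $u\neq v$ contradicts $R$-thinness.

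Part (b): if $\alpha_i,\alpha_j$ had different orders for some $i,j$, then by connectedness of $\Sigma$ (passing along a path where the order of the component changes) there is an edge $\{i,j\}\in E(\Sigma)$ with the orders of $\alpha_i,\alpha_j$ distinct. Since the two orders differ, one does not divide the other; relabelling if necessary, assume the order of $\alpha_j$ does not divide $p:=$ the order of $\alpha_i$. Raising $(\aln)$ to the $p$-th power gives a tuple in $\auts(\Gamma)$ that is nondiagonal (its $i$-th entry is $\id$, its $j$-th entry is not) yet has a component, namely $\id$, lying in $\aut(\Gamma)$ — contradicting part (a). Part (c): if $\alpha_i=\alpha_j$ for some edge $\{i,j\}$ of $\Sigma$, the defining condition of a $\Sigma$-automorphism read along that edge says exactly that $\{u,v\}\in E(\Gamma)\iff\{u^{\alpha_i},v^{\alpha_i}\}\in E(\Gamma)$ for all $u,v$, i.e.\ $\alpha_i\in\aut(\Gamma)$, again contradicting part (a). (Everything is vacuous unless $\Sigma$ has an edge, which is automatic once a nondiagonal $\Sigma$-automorphism exists, so no separate degenerate case is needed.)

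\textbf{Main obstacle.} The only real bookkeeping is in part (a): normalizing the tuple so that one coordinate becomes the identity while an adjacent coordinate stays nontrivial, and then reading off the two neighbourhood inclusions — one from the $\Sigma$-automorphism axiom and one from its inverse — without sign or direction errors. Once (a) is in hand, (b) and (c) are short formal reductions to it.
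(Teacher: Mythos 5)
Your proposal is correct and follows essentially the same route as the paper: part (a) is proved by invoking Lemma~\ref{le:le1.2}, normalizing the tuple so that one coordinate of an edge becomes the identity, and deriving $N_\Gamma(u)=N_\Gamma(v)$ from the tuple and its inverse to contradict $R$-thinness, after which (b) and (c) are reduced to (a) exactly as in the paper (raising to the $p$-th power, resp.\ reading the $\Sigma$-automorphism condition along an edge with equal entries). One tiny wording issue in (b): ``the two orders differ, so one does not divide the other'' should be ``at least one does not divide the other,'' but your relabelling clause makes the intended (and correct) claim clear, so the argument is sound.
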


\begin{proof}
Suppose that $\Gamma$ is $R$-thin. Let $(\aln) \in \auts(\Gamma)$ be an arbitrary nondiagonal $\Sigma$-automorphism of $\Gamma$.

(a) Suppose for a contradiction that $\alpha_i \in \aut(\Gamma)$ for at least one $1 \le i \le n$. Then by Lemma~\ref{le:le1.2} we have $\alpha_i \in \aut(\Gamma)$ for all $1 \le i \le n$. Since $\Sigma$ is connected and $(\aln)$ is nondiagonal, there exists an edge $\{i,j\} \in E(\Sigma)$ such that $\alpha_i\neq\alpha_j$ and hence $\gamma := \alpha_j\alpha^{-1}_i$ is a non-identity element of $\aut(\Gamma)$. So $\gamma$ moves at least one vertex of $\Gamma$, say, $u$, so that $v := u^{\gamma} \ne u$. Assume without loss of generality that $i < j$. Then $(\alpha_1\alpha_i^{-1},\ldots, \alpha_i\alpha^{-1}_i, \ldots, \alpha_j\alpha_i^{-1},\ldots, \alpha_n\alpha_i^{-1}) \in \auts(\Gamma)$ by Lemma~\ref{le:le1.1}. For each $w \in N_\Gamma(u)$, we have $\{w, u^{\gamma}\}=\{w,v\}$. Hence each neighbour $w$ of $u$ in $\Gamma$ is also a neighbour of $v$ in $\Gamma$. So $N_\Gamma(u) \subseteq N_\Gamma(v)$. By Lemma \ref{le:le1.1} again, we have $((\alpha_1 \alpha_i^{-1})^{-1},\ldots,(\alpha_i \alpha_i^{-1})^{-1},\ldots,(\alpha_j \alpha_i^{-1})^{-1},\ldots,(\alpha_n \alpha_i^{-1})^{-1})\in\auts(\Gamma)$. Since $\gamma^{-1}=(\alpha_j\alpha_i^{-1})^{-1}$, we obtain that $\{w,v^{\gamma^{-1}}\}=\{w,u\} \in E(\Gamma)$ for any $w\in N_{\Gamma}(v)$. In other words, $N_\Gamma(v)\subseteq N_\Gamma(u)$. Therefore, $N_\Gamma(u) = N_\Gamma(v)$ and thus $\Gamma$ is not $R$-thin, but this is a contradiction.

(b) Suppose to the contrary that there exist $1 \le i, j \le n$ such that $\alpha_i$ and $\alpha_j$ have different orders. Since $\Sigma$ is connected, this assumption implies that there exist $1 \le i, j \le n$ with $\{i, j\} \in E(\Sigma)$ such that $\alpha_i$ and $\alpha_j$ have different orders. Without loss of generality we may assume $\{1,n\} \in E(\Sigma)$ and $\alpha_1, \alpha_n$ have orders $p, q$, respectively, for some $p < q$. Since $(\aln) \in  \auts(\Gamma)$, we have $(\alpha_1^p,\alpha_2^p,\ldots,\alpha^p_n) = (\id,\alpha_2^p,\ldots,\alpha^p_n)  \in \auts(\Gamma)$. Moreover, $(\id,\alpha_2^p,\ldots,\alpha^p_n)$ is nondiagonal as $\alpha^p_n \ne \id$. Since $\id \in \aut(\Gamma)$, this contradicts what we have proved in part (a).

(c) Suppose that there exists some $\{i,j\} \in E(\Sigma)$ such that $\alpha_i=\alpha_j$. Since $(\aln) \in \auts(\Gamma)$ and $\{i,j\} \in E(\Sigma)$, for any $\{u,v\} \in E(\Gamma)$, we have $\{u^{\alpha_i}, v^{\alpha_i}\} = \{u^{\alpha_i}, v^{\alpha_j}\} \in E(\Gamma)$. Hence $\alpha_i \in \aut(\Gamma)$. But this contradicts what we have proved in part (a).
\end{proof}

\begin{prop}
\label{th:th1.1}
Let $\Gamma$ be an $R$-thin connected graph and $\Sigma$ a connected graph with $V(\Sigma) = \{1, \ldots, n\}$. Let $\Delta$ be a subgraph  of $\Sigma$. Then any $(\aln) \in \auts(\Gamma)$ gives rise to a $\Delta$-automorphism of $\Gamma$, namely $(\alpha_{i})_{i \in V(\Delta)} \in \aut_{\Delta}(\Gamma)$.
\end{prop}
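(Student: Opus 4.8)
The plan is to check directly that the sub-tuple $(\alpha_i)_{i\in V(\Delta)}$ satisfies the definition of a $\Delta$-automorphism of $\Gamma$, that is, that for all $u,v\in V(\Gamma)$ one has $\{u,v\}\in E(\Gamma)$ if and only if $\{u^{\alpha_i},v^{\alpha_j}\}\in E(\Gamma)$ for every $\{i,j\}\in E(\Delta)$. Since $\Delta$ is a subgraph of $\Sigma$ we have $V(\Delta)\subseteq V(\Sigma)$ and $E(\Delta)\subseteq E(\Sigma)$, so each $\alpha_i$ with $i\in V(\Delta)$ is genuinely one of the permutations making up $(\alpha_1,\dots,\alpha_n)$, and every edge of $\Delta$ is an edge of $\Sigma$. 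This makes the ``only if'' direction immediate: if $\{u,v\}\in E(\Gamma)$ then, since $(\alpha_1,\dots,\alpha_n)\in\auts(\Gamma)$, we get $\{u^{\alpha_i},v^{\alpha_j}\}\in E(\Gamma)$ for every $\{i,j\}\in E(\Sigma)$, hence a fortiori for every $\{i,j\}\in E(\Delta)$.

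The substance is in the ``if'' direction, and the key idea is to exploit the group structure of $\auts(\Gamma)$. A naive attempt stalls: assuming $\{u,v\}\notin E(\Gamma)$, the defining property of $(\alpha_1,\dots,\alpha_n)$ only produces \emph{some} edge $\{i',j'\}\in E(\Sigma)$ with $\{u^{\alpha_{i'}},v^{\alpha_{j'}}\}\notin E(\Gamma)$, and there is no reason for $\{i',j'\}$ to lie in $\Delta$. Instead I would use that $\auts(\Gamma)$ is a group under componentwise composition, so that the inverse $(\alpha_1^{-1},\dots,\alpha_n^{-1})$ of $(\alpha_1,\dots,\alpha_n)$ again belongs to $\auts(\Gamma)$, i.e.\ is itself a $\Sigma$-automorphism of $\Gamma$.

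Now fix $u,v\in V(\Gamma)$ with $\{u^{\alpha_i},v^{\alpha_j}\}\in E(\Gamma)$ for every $\{i,j\}\in E(\Delta)$, and pick any edge $\{p,q\}\in E(\Delta)$; then $\{u^{\alpha_p},v^{\alpha_q}\}\in E(\Gamma)$. Applying the defining property of the $\Sigma$-automorphism $(\alpha_1^{-1},\dots,\alpha_n^{-1})$ to the pair of vertices $u^{\alpha_p},v^{\alpha_q}$, and using that $\{p,q\}\in E(\Delta)\subseteq E(\Sigma)$, the edge $\{p,q\}$ yields $\{(u^{\alpha_p})^{\alpha_p^{-1}},(v^{\alpha_q})^{\alpha_q^{-1}}\}=\{u,v\}\in E(\Gamma)$, which is exactly what is needed. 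This finishes the verification that $(\alpha_i)_{i\in V(\Delta)}$ is a $\Delta$-automorphism of $\Gamma$.

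I do not foresee a genuine obstacle beyond the point already flagged: the decision to pass to the inverse tuple in $\auts(\Gamma)$ rather than trying to steer a ``bad'' non-edge of $\Sigma$ into $\Delta$. The only caveat is the tacit assumption that $\Delta$ carries at least one edge, so that the notion of a $\Delta$-automorphism and the edge $\{p,q\}$ above are meaningful; this holds in all intended applications, where $\Delta$ is a path or a sub-path/cycle of $C_n$. It is also worth noting that neither the $R$-thinness of $\Gamma$ nor the connectedness of $\Gamma$ and $\Sigma$ is actually used in this particular argument; these are standing hypotheses carried over from the rest of the section.
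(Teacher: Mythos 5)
Your proof is correct, but it takes a more roundabout route than the paper does, and the difference traces back to how you parsed the definition of a $\Sigma$-automorphism. The paper reads the quantifier over edges as sitting \emph{outside} the biconditional: for all $u,v\in V(\Gamma)$ and every single edge $\{i,j\}\in E(\Sigma)$, one has $\{u,v\}\in E(\Gamma)$ if and only if $\{u^{\alpha_i},v^{\alpha_j}\}\in E(\Gamma)$. Under that reading both directions restrict trivially to $E(\Delta)\subseteq E(\Sigma)$, and the paper's entire proof is just this observation — no group structure, no inverses. You instead read the quantifier as sitting \emph{inside} the biconditional (``$\{u,v\}\in E(\Gamma)$ iff the image is an edge for \emph{all} edges of $\Sigma$''), which genuinely breaks the ``if'' direction under restriction, and you repair it correctly by passing to the inverse tuple $(\alpha_1^{-1},\dots,\alpha_n^{-1})\in\auts(\Gamma)$ and applying it along a single edge $\{p,q\}$ of $\Delta$. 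That argument is sound (the componentwise inverse is indeed the group inverse in $\auts(\Gamma)$), and it would also go through under the paper's reading, where it is simply unnecessary. The one substantive cost of your route is the caveat you yourself flag: you need $E(\Delta)\neq\emptyset$ to have an edge $\{p,q\}$ to work with, whereas the paper's restriction argument covers the edgeless case vacuously. Your closing remark that $R$-thinness and connectedness are not used is accurate and matches the paper.
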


\begin{proof}
Since $\Delta$ is a subgraph of $\Sigma$, without loss of generality we may assume $V(\Delta) = \{1,\ldots,m\}$. Then for $i, j \in V(\Delta)$, $\{i, j\} \in E(\Sigma)$ whenever $\{i,j\} \in E(\Delta)$. Since $(\aln) \in \auts(\Gamma)$, for all $u, v \in V(\Gamma)$ and any $\{i,j\} \in E(\Sigma)$, $\{u,v\} \in E(\Gamma)$ if and only if $\{u^{\alpha_i}, v^{\alpha_j}\} \in E(\Gamma)$. Since $E(\Delta) \subseteq E(\Sigma)$, the same statement holds for $\{i,j\} \in E(\Delta)$ and hence $(\alpha_{i})_{i \in V(\Delta)} \in \aut_{\Delta}(\Gamma)$.
\end{proof}

\delete
{
The next two lemmas will not be used in the proof of any result in this paper, but they may be useful elsewhere. Let $\Gamma_1, \Gamma_2$ and $\Sigma$ be graphs with $V(\Sigma) = \{1, \ldots, n\}$. An $n$-tuple of bijections $(\aln)$ from $V(\Gamma_1)$ to $V(\Gamma_2)$ is called a {\em $\Sigma$-isomorphism} from $\Gamma_1$ to $\Gamma_2$ if for all $u, v \in V(\Gamma_1)$, $\{u, v\} \in E(\Gamma_1)$ if and only if $\{u^{\alpha_i}, v^{\alpha_j}\} \in E(\Gamma_2)$ for all $i, j \in V(\Sigma)$ with $\{i,j\} \in E(\Sigma)$. If such a $\Sigma$-isomorphism exists, then $\Gamma_1$ and $\Gamma_2$ are said to be \emph{$\Sigma$-isomorphic}. It is readily seen that a $\Sigma$-isomorphism from $\Gamma_1$ to $\Gamma_1$ is a $\Sigma$-automorphism of $\Gamma_1$ defined earlier. In the special case when $\Sigma = K_n$, a $K_n$-isomorphism is called an {\em $n$-fold isomorphism}, and two graphs are {\em $n$-fold isomorphic} if they are $K_n$-isomorphic. An $n$-fold isomorphism from $\Gamma_1$ to $\Gamma_1$ is called an {\em $n$-fold automorphism} of $\Gamma_1$. A $2$-fold isomorphism from $\Gamma_1$ to $\Gamma_1$ is exactly a two-fold isomorphism from $\Gamma_1$ to $\Gamma_1$ defined earlier.

\begin{lemma}
\label{pr:pr1.2}
Let $\Gamma$ be a graph with $n$ connected components each isomorphic to $\Delta$. Then there is an $n$-fold isomorphism from $\Gamma$ to $\Delta \times K_n$.
\end{lemma}

\begin{proof}
We treat $\Gamma$ as the union of $n$ vertex-disjoint copies of $\Delta$ with vertex sets $V_1, \ldots, V_n$ respectively. For each $u \in V(\Delta)$, let $u_j$ denote the corresponding vertex in $V_j$ for $1 \leq j \leq n$. Set $V(K_n) = \{1, \ldots, n\}$. For $1 \leq i \leq n$, define $\alpha_i$ to be the mapping from $V(\Gamma) = \cup_{j=1}^n V_j$ to $V(\Delta) \times V(K_n)$ such that $u_j^{\alpha_i}= (u, (i+j-1) \text{ mod } n)$ for $u \in V(\Delta)$ and $1 \le j \le n$. Every edge of $\Gamma$ is of the form $\{u_j, v_j\}$ for some $\{u, v\} \in E(\Delta)$ and $1 \leq j \leq n$. For $1 \leq k, l \leq n$ with $k \ne l$, we have $(k+j-1) \text{ mod } n \ne (l+j-1) \text{ mod } n$, and hence $\{u_j, v_j\} \in E(\Gamma)$ if and only if $\{u_j^{\alpha_k}, v_j^{\alpha_l}\} = \{(u, (k+j-1) \text{ mod } n), (v, (l+j-1) \text{ mod } n)\} \in E(\Delta \times K_n)$. Therefore, $(\alpha_1, \ldots, \alpha_n)$ is an $n$-fold isomorphism from $\Gamma$ to $\Delta \times K_n$.
\end{proof}

\begin{lemma}
Let $\Gamma$, $\Delta$ and $\Sigma$ be connected graphs.  Then the following statements hold:
\begin{enumerate}
\item
if $(\aln) \in \auts(\Gamma)$ and $(\ben) \in \auts(\Delta)$, where $n = |V(\Sigma)|$, then
$$
((\alpha_1,\beta_1), \ldots, (\alpha_n,\beta_n)) \in \auts(\Gamma\times\Delta),
$$
where each $(\alpha_i, \beta_i)$ is regarded as a permutation of $V(\Gamma \times \Delta)$ defined by $(u, x)^{(\alpha_i, \beta_i)} = (u^{\alpha_i}, x^{\beta_i})$ for $(u, x) \in V(\Gamma \times \Delta)$; in particular, if $\Gamma$ or $\Delta$ admits a nondiagonal $\Sigma$-automorphism, then so does $\Gamma\times\Delta$;
\item
if $(\alpha_1, \ldots, \alpha_m) \in \auts(\Gamma)$ and $(\ben) \in \aut_{\Delta}(\Gamma)$, where $m = |V(\Sigma)|$ and $n = |V(\Delta)|$, then
$$
(\alpha_1\beta_1, \ldots,\alpha_1\beta_n, \alpha_2\beta_1, \ldots,\alpha_2\beta_n, \ldots, \alpha_m\beta_1, \ldots, \alpha_m\beta_n) \in \aut_{\Sigma \times \Delta}(\Gamma);
$$
in particular, if $\Gamma$ admits a nondiagonal $\Sigma$-automorphism or a nondiagonal $\Delta$-automorphism, then it admits a nondiagonal $(\Sigma \times \Delta)$-automorphism.
\end{enumerate}
\end{lemma}

\begin{proof}
(a) Set $V(\Sigma) = \{1, \ldots, n\}$. Since $(\aln) \in \auts(\Gamma)$ and $(\ben) \in \auts(\Delta)$, for any $u, v \in V(\Gamma)$, $x, y \in V(\Delta)$ and $\{i, j\} \in E(\Sigma)$, we have: $\{(u, x), (v, y)\} \in E(\Gamma\times \Delta)$ $\Leftrightarrow$ $\{u, v\} \in E(\Gamma)$ and $\{x, y\} \in E(\Delta)$ $\Leftrightarrow$ $\{u^{\alpha_i}, v^{\alpha_j}\} \in E(\Gamma)$ and $\{x^{\beta_i},y^{\beta_j}\} \in E(\Delta)$ $\Leftrightarrow$ $\{(u^{\alpha_i}, x^{\beta_i}), (v^{\alpha_j}, y^{\beta_j})\} \in E(\Gamma\times \Delta)$ $\Leftrightarrow$ $\{(u, x)^{(\alpha_i, \beta_i)}, (v, y)^{(\alpha_j, \beta_j)}\} \in E(\Gamma\times \Delta)$. Therefore, $((\alpha_1,\beta_1), \ldots, (\alpha_n,\beta_n)) \in \auts(\Gamma\times\Delta)$. Clearly, if $(\aln)$ or $(\ben)$ is nondiagonal, then $((\alpha_1,\beta_1), \ldots, (\alpha_n,\beta_n))$ is nondiagonal.

(b) Set $V(\Sigma) = \{1, \ldots, m\}$ and $V(\Delta) = \{1, \ldots, n\}$. Then $\{(i, k), (j,l)\} \in E(\Sigma \times \Delta)$ if and only if $\{i,j\} \in E(\Sigma)$ and $\{k, l\} \in E(\Delta)$. Since $(\alpha_1, \ldots, \alpha_m) \in \auts(\Gamma)$ and $(\ben) \in \aut_{\Delta}(\Gamma)$, for any $u, v \in V(\Gamma)$ and $\{(i, k), (j,l)\} \in E(\Sigma \times \Delta)$, we have: $\{u, v\} \in E(\Gamma)$ $\Leftrightarrow$ $\{u^{\alpha_i},v^{\alpha_j}\} \in E(\Gamma)$ $\Leftrightarrow$ $\{(u^{\alpha_i})^{\beta_k},(v^{\alpha_j})^{\beta_l}\} = \{u^{\alpha_i \beta_k}, v^{\alpha_j \beta_l}\} \in E(\Gamma)$. Therefore, $(\alpha_1\beta_1, \ldots,\alpha_1\beta_n, \ldots, \alpha_m\beta_1, \ldots, \alpha_m\beta_n) \in \aut_{\Sigma \times \Delta}(\Gamma)$. It is easy to see that this $(\Sigma \times \Delta)$-automorphism of $\Gamma$ is nondiagonal if either $(\alpha_1, \ldots, \alpha_m)$ or $(\ben)$ is nondiagonal.
\end{proof}
}

\section{Proof of Theorem \ref{th:thr3.7}}

\label{sec:sec3}

We will use the following known results in our proof of Theorem \ref{th:thr3.7}. Recall that $\pgs$ denotes the set of elements of $\aut(\Gamma\times\Sigma)$ which leave the partition $\{V(\Gamma)\times\{i\}:i \in V(\Sigma)\}$ invariant (\cite[Definition 2.3]{qin211}).

\begin{lemma}\label{le:ler3.6}
Let $\Gamma$ and $\Sigma$ be graphs.
\begin{enumerate}
\item
\label{it:it3.6.1}
If $\autgs = \pgs$, then $(\Gamma,\Sigma)$ is unstable if and only if at least one $\Sigma$-automorphism of $\Gamma$ is nondiagonal (\cite[Lemma~2.6]{qin211}).
\item
\label{it:it3.6.2}
If both $\Gamma$ and $\Sigma$ are connected, $R$-thin and non-bipartite, then $(\Gamma,\Sigma)$ is stable if and only if $\Gamma$ and $\Sigma$ are coprime (\cite[Lemma~3.7]{qin211}).
\end{enumerate}
\end{lemma}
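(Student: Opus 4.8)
Both parts of this lemma are restated from \cite{qin211} (they are \cite[Lemma~2.6]{qin211} and \cite[Lemma~3.7]{qin211} respectively), so strictly speaking nothing new is needed; the plan below records how I would reconstruct those arguments.

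For part~(a), I would first observe that the canonical monomorphism $\aut(\Gamma)\times\aut(\Sigma)\hookrightarrow\autgs$, sending $(\alpha,\psi)$ to the permutation $(u,i)\mapsto(u^{\alpha},i^{\psi})$, has image contained in $\pgs$, since that permutation permutes the layers $V(\Gamma)\times\{i\}$; hence under the hypothesis $\autgs=\pgs$, and since a finite group isomorphic to one of its own subgroups is equal to it, $(\Gamma,\Sigma)$ is stable precisely when $\pgs=\aut(\Gamma)\times\aut(\Sigma)$. Next I would describe an arbitrary $\sigma\in\pgs$: it induces a permutation $\phi$ of $V(\Sigma)=\{1,\dots,n\}$ with $(V(\Gamma)\times\{i\})^{\sigma}=V(\Gamma)\times\{i^{\phi}\}$, together with bijections $\alpha_{i}$ of $V(\Gamma)$ recording the action on the $i$-th layer; tracking edges of $\Gamma\times\Sigma$ that run between two layers forces $\phi\in\aut(\Sigma)$, and after normalising $\sigma$ to be layer-preserving (by composing with the image of $(\id,\phi^{-1})$) the adjacency relation of $\Gamma\times\Sigma$ restricted to an edge $\{i,j\}$ of $\Sigma$ translates verbatim into the defining property of $(\alpha_{1},\dots,\alpha_{n})\in\auts(\Gamma)$. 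Thus $\sigma\mapsto\phi$ is a surjective homomorphism $\pgs\to\aut(\Sigma)$ whose kernel is naturally identified with $\auts(\Gamma)$, while the analogous kernel for $\aut(\Gamma)\times\aut(\Sigma)$ is the diagonal copy of $\aut(\Gamma)$ — which, for connected $\Sigma$ with at least one edge, is exactly the set of diagonal $\Sigma$-automorphisms. Hence $\pgs=\aut(\Gamma)\times\aut(\Sigma)$ if and only if every $\Sigma$-automorphism of $\Gamma$ is diagonal, which is part~(a). The only delicate point is the bookkeeping establishing $\phi\in\aut(\Sigma)$ and that the layer-preserving normalisation does not change whether $\sigma$ lies in $\aut(\Gamma)\times\aut(\Sigma)$ (the hypothesis $\autgs=\pgs$ conveniently rules out the degenerate edgeless cases).

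For part~(b), the ``only if'' direction is immediate: by \cite[Theorem~1.3]{qin211}, quoted in the introduction, stability of $(\Gamma,\Sigma)$ already forces $\Gamma$ and $\Sigma$ to be coprime. For the ``if'' direction I would invoke the structure theory of direct products of connected non-bipartite graphs (see \cite{ham11}): such a product is connected, and each of the $R$-thin connected non-bipartite graphs $\Gamma$ and $\Sigma$ factors uniquely into $R$-thin connected non-bipartite prime factors, sharing no common factor because they are coprime; the automorphism group of a direct product of such primes is the ``expected'' one, assembled from the automorphism groups of the primes together with symmetric groups permuting isomorphic copies, so the disjointness of the two prime factorisations yields $\autgs=\aut(\Gamma)\times\aut(\Sigma)$, i.e. stability. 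The main obstacle, and the only step that genuinely uses $R$-thin-ness and non-bipartiteness, is this last computation of the automorphism group of a direct product, which is precisely what \cite[Lemma~3.7]{qin211} carries out.
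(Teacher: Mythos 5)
Your proposal is correct and matches the paper's treatment exactly: Lemma \ref{le:ler3.6} is stated as a pure citation of \cite[Lemma~2.6]{qin211} and \cite[Lemma~3.7]{qin211}, and the paper supplies no proof of its own. Your supplementary reconstructions --- part (a) via the identification $\pgs\cong\auts(\Gamma)\rtimes\aut(\Sigma)$, the observation that a finite group isomorphic to a subgroup of itself equals that subgroup, and the comparison of kernels over $\aut(\Sigma)$; part (b) via \cite[Theorem~1.3]{qin211} for necessity and the unique prime factorisation and automorphism structure of connected non-bipartite $R$-thin direct products from \cite{ham11} for sufficiency --- are sound and consistent with the arguments carried out in \cite{qin211}.
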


Denote by $\pi_{1}$ and $\pi_{2}$ the projections from $V(\Gamma\times\Sigma)$ to $V(\Gamma)$ and $V(\Sigma)$, respectively. That is, $(u,i)^{\pi_{1}} = u$ and $(u,i)^{\pi_{2}} = i$ for any $(u, i) \in V(\Gamma\times\Sigma)$. The {\em Boolean square} of $\Gamma$, denoted by ${\mathrm B}(\Gamma)$, is the graph with vertex set $V(\Gamma)$ and edge set $\{\{u,v\}:u, v\in V(\Gamma), u\neq v, N_{\Gamma}(u)\cap N_{\Gamma}(v)\neq\emptyset\}$.
In particular, if $\Gamma$ is the cycle $C_n$ with vertices labelled $1, \ldots, n$ consecutively, then ${\mathrm B}(C_n)$ is the graph with vertex set $\{1, \ldots, n\}$ such that each $i$ is adjacent to $i+2$ and $i-2$ modulo $n$. Note that ${\mathrm B}(C_n)$ is a cycle of length $n$ when $n$ is odd, and ${\mathrm B}(C_n)$ is the union of two cycles of length $n/2$ when $n$ is even. Recall that the automorphism group of $C_n$ is the dihedral group $\dn=\langle a,b\ |\ a^n=b^2=e, bab=a^{-1}\rangle$ of order $2n$. For a graph $\Gamma$ and a partition $\mathcal{B}$ of $V(\Gamma)$, the {\em quotient graph} $\Gamma_{\mathcal{B}}$ of $\Gamma$ with respect to $\mathcal{B}$ is defined to have vertex set $\mathcal{B}$ such that two blocks $B, C$ of $\mathcal{B}$ are adjacent if and only if there exists at least one edge of $\Gamma$ with one end-vertex in $B$ and the other end-vertex in $C$.

The next technical lemma will be used to prove another lemma (Lemma \ref{le:ler3.3}) which in turn will be vital to the proof of Theorem \ref{th:thr3.7}.

\begin{lemma}
\label{le:ler3.2}
Let $\Gamma$ be an $R$-thin connected graph and $n \ge 3$ an integer with $n\neq 4$. Set $V(C_n) = \{0,1,\ldots,n-1\}$. Then the following statements hold:
\begin{enumerate}
\item
\label{le:le2.1}
for any $u \in V(\Gamma)$, $\{i,j\} \in E(\bfc)$ and $\sigma \in \aut(\Gamma\times C_n)$, if $(u, i)^{\sigma\pi_{2}}\neq(u, j)^{\sigma\pi_{2}}$, then $(u,i)^{\sigma\pi_{1}}=(u,j)^{\sigma\pi_1}$;
\item
\label{le:le2.2}
for any $u \in V(\Gamma)$, $\{i,j\} \in E(\bfc)$ and $\sigma \in \aut(\Gamma\times C_n)$, if $(u, i)^{\sigma\pi_{2}}=(u, j)^{\sigma\pi_{2}}$, then $(u,i)^{\sigma\pi_{2}}=(u,k)^{\sigma\pi_{2}}$ where $k\equiv i+2r\pmod{n}$ for any $1 \le r \le (n/2)-1$.
\end{enumerate}
Moreover, if at least one of $\Gamma$ and $C_n$ is non-bipartite, then the following statements hold:
\begin{enumerate}
\item[\rm (c)]
for any $u \in V(\Gamma)$, $\{i,j\} \in E(\bfc)$ and $\sigma \in \aut(\Gamma\times C_n)$, if $(u,i)^{\sigma\pi_{2}}\neq(u,j)^{\sigma\pi_{2}}$, then $(u,k)^{\sigma\pi_{2}}\neq(u,l)^{\sigma\pi_{2}}$ for any distinct $k, l \in \{1,\ldots, n\}$;
\item[\rm (d)]
\label{le:le2.4}
for any $\sigma \in \aut(\Gamma\times C_n)$, if there exist $u \in V(\Gamma)$ and $\{i,j\} \in E(\bfc)$ such that $(u,i)^{\sigma\pi_{2}}\neq (u,j)^{\sigma\pi_{2}}$, then $(v,i)^{\sigma\pi_{2}}  \neq(v,j)^{\sigma\pi_{2}}$ for every $v \in V(\Gamma)$;
\item[\rm (e)]
if for any $u\in V(\Gamma)$ and $\sigma \in \aut(\Gamma\times C_n)$ there exists $\{i,j\} \in E(\bfc)$ such that $(u, i)^{\sigma\pi_{2}}\neq(u, j)^{\sigma\pi_{2}}$, then for any $v, w \in V(\Gamma)$ joined by an even-length walk in $\Gamma$, and any $k \in V(C_{n})$, we have $(v, k)^{\sigma\pi_{2}}=(w, k)^{\sigma\pi_{2}}$;
\item[\rm (f)]
if there exist $u \in V(\Gamma)$ and $\{i,j\} \in E(\bfc)$ such that $(u, i)^{\sigma\pi_{2}}\neq(u, j)^{\sigma\pi_{2}}$ for any $\sigma \in \aut(\Gamma\times C_n)$, then $\autgc=\pgc$.
\end{enumerate}
\end{lemma}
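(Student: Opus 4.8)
My plan is to establish (a)--(e) first, all by the same common-neighbour bookkeeping, and then read off (f) from (c)--(e). For $u\in V(\Gamma)$ and $\{i,j\}\in E(\bfc)$ the vertices $(u,i),(u,j)$ have common-neighbour set $N_\Gamma(u)\times\{m\}$ in $\Gamma\times C_n$, where $m$ is the common neighbour of $i$ and $j$ in $C_n$ (unique because $n\ne 4$); applying $\sigma\in\aut(\Gamma\times C_n)$ and comparing this with the common-neighbour set of $(u,i)^\sigma$ and $(u,j)^\sigma$, while using that $\sigma$ preserves degrees, that $|N_{\Gamma\times C_n}((v,k))|=2\deg(v)$, and that $\Gamma$ is $R$-thin, pins down the relevant coordinates of the images. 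For instance, for (a): if $(u,i)^{\sigma\pi_2}\ne(u,j)^{\sigma\pi_2}$, then the common-neighbour set of $(u,i)^\sigma$ and $(u,j)^\sigma$ is confined to one fibre $V(\Gamma)\times\{s\}$, and a cardinality count (using degree preservation) forces $N_\Gamma((u,i)^{\sigma\pi_1})=N_\Gamma((u,j)^{\sigma\pi_1})$, hence $(u,i)^{\sigma\pi_1}=(u,j)^{\sigma\pi_1}$ by $R$-thinness. Parts (b)--(e) then follow by similar but more delicate counts, split into cases according to $n$ and exploiting the non-bipartiteness hypothesis, which makes $\Gamma\times C_n$ connected and controls parities of walks.

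For (f) it suffices, since $\pgc\le\autgc$ always holds, to show that every $\sigma\in\aut(\Gamma\times C_n)$ leaves the fibre partition $\{V(\Gamma)\times\{k\}:k\in V(C_n)\}$ invariant; and for this it is enough that $\sigma\pi_2$ be constant on each fibre, for then, writing $\bar\sigma(k)$ for that constant value, $\bar\sigma$ must be injective (else two distinct fibres, of total size $2|V(\Gamma)|$, would be sent by the injection $\sigma$ into a single fibre of size $|V(\Gamma)|$), hence bijective, and $\sigma$ maps $V(\Gamma)\times\{k\}$ onto $V(\Gamma)\times\{\bar\sigma(k)\}$ for every $k$. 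By hypothesis there are $u$ and $\{i,j\}\in E(\bfc)$ such that $(u,i)^{\sigma\pi_2}\ne(u,j)^{\sigma\pi_2}$ holds for all $\sigma\in\aut(\Gamma\times C_n)$. Fix such a $\sigma$; part (d) then gives $(v,i)^{\sigma\pi_2}\ne(v,j)^{\sigma\pi_2}$ for all $v\in V(\Gamma)$ (and part (c) moreover shows $\sigma\pi_2$ is injective on each column $\{v\}\times V(C_n)$). As the fixed edge $\{i,j\}$ thus works for every $v$ and every $\sigma$, the hypothesis of part (e) holds, so $(v,k)^{\sigma\pi_2}=(w,k)^{\sigma\pi_2}$ whenever $v,w$ are joined by an even-length walk in $\Gamma$ and $k\in V(C_n)$.

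If $\Gamma$ is non-bipartite, all pairs of its vertices are joined by even walks, so $\sigma\pi_2$ is already constant on each fibre and we are done. Otherwise $\Gamma$ is bipartite with parts $X,Y$, which forces $n$ odd since then $C_n$ must be non-bipartite. Now the even-walk classes are precisely $X$ and $Y$, so (e) provides maps $f,g:V(C_n)\to V(C_n)$ with $(x,k)^{\sigma\pi_2}=f(k)$ for $x\in X$ and $(y,k)^{\sigma\pi_2}=g(k)$ for $y\in Y$. Fix $x\in X$ and $k\in V(C_n)$ and compute the set of second coordinates of the neighbours of $(x,k)^\sigma$ in two ways. Writing $(x,k)^\sigma=(w,f(k))$, its neighbours in $\Gamma\times C_n$ are $N_\Gamma(w)\times\{f(k)-1,f(k)+1\}$, so this set equals $\{f(k)-1,f(k)+1\}$ (note $N_\Gamma(w)\ne\emptyset$ as $\Gamma$ is connected and nontrivial). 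On the other hand $\sigma$ maps the neighbourhood $N_\Gamma(x)\times\{k-1,k+1\}$ of $(x,k)$, which lies in $Y\times\{k-1,k+1\}$, bijectively onto the neighbourhood of $(x,k)^\sigma$, so this set also equals $\{g(k-1),g(k+1)\}$. Hence $\{g(k-1),g(k+1)\}=\{f(k)-1,f(k)+1\}$ for all $k$, and, by the symmetric argument, $\{f(k-1),f(k+1)\}=\{g(k)-1,g(k)+1\}$ for all $k$. Summing the two elements on each side gives $g(k-1)+g(k+1)=2f(k)$ and $f(k-1)+f(k+1)=2g(k)$ in $\ZZ_n$; subtracting and putting $\delta=f-g$ gives $\delta(k-1)+2\delta(k)+\delta(k+1)=0$ for all $k$. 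With $e(k):=\delta(k)+\delta(k-1)$ this becomes $e(k+1)=-e(k)$, so $e(k)=(-1)^ke(0)$, and since $e$ is $n$-periodic with $n$ odd, $e\equiv 0$; hence $\delta(k)=-\delta(k-1)$ and, again using that $n$ is odd, $\delta\equiv 0$. Therefore $f=g$, so $\sigma\pi_2$ is constant on each fibre, which proves (f).

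The main obstacle is this bipartite sub-case: parts (c)--(e) only give constancy of $\sigma\pi_2$ separately on $X\times\{k\}$ and on $Y\times\{k\}$, and closing the gap $f=g$ genuinely needs both the edge structure of $\Gamma\times C_n$ and the parity of $n$ --- indeed $f=g$ would fail if $n$ were even, which is consistent with $C_n$ being forced non-bipartite here. By contrast the non-bipartite-$\Gamma$ case is immediate from (e), and the innocuous-looking final reduction (constancy of $\sigma\pi_2$ on fibres $\Rightarrow$ $\sigma$ permutes fibres) uses only that $\sigma$ is a bijection between fibres of equal size.
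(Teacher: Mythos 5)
Your write-up proves part (f) in full (granting (a)--(e)) and sketches (a) convincingly, but parts (b)--(e) are not actually proved: you dismiss them with ``similar but more delicate counts,'' and that is where the real technical content of this lemma lives. These parts are not common-neighbour counts of the same kind as (a). Part (b) is an induction on $r$ whose inductive step, in the paper, produces a specific common neighbour $(v,i+2m-3)$ of $(u,i+2m-4)$ and $(u,i+2m-2)$ whose image would have to be adjacent to $(u,i+2m)^\sigma$, contradicting non-adjacency in the preimage. Part (c) requires propagating the relation $(u,i)^{\sigma\pi_2}\neq(u,i+2)^{\sigma\pi_2}$ along a walk from $(u,i)$ to $(u,i+1)$ lying inside two adjacent fibres (which exists because $\Gamma\times C_n$ is connected), alternating between the two fibres; part (d) propagates it along a path in $\Gamma$; part (e) uses the pair $W=\{(x,k-1),(x,k+1)\}$ of common neighbours of $(v,k)$ and $(w,k)$ together with (a) and (c) to force $(v,k)^{\sigma\pi_2}=(w,k)^{\sigma\pi_2}$, and this is exactly where the hypothesis $n\neq 4$ enters. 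None of these steps is supplied or even gestured at specifically, so the proposal has a genuine gap covering the bulk of the statement.

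That said, your argument for (f) is complete modulo (a)--(e), correct, and takes a genuinely different route from the paper in the hard subcase. The paper splits on the parity of $n$: for odd $n$ and vertices $v,w$ in different bipartition classes it builds a $2n$-cycle $C_{v,w}$ in $\Gamma\times C_n$ and argues that antipodal vertices must keep equal second coordinates; for even $n$ it uses bipartiteness of $\Gamma\times C_n$. You instead split on bipartiteness of $\Gamma$, reduce the bipartite case (which forces $n$ odd) to two functions $f,g:\ZZ_n\to\ZZ_n$ recording $\sigma\pi_2$ on the two parts, and derive $\{g(k-1),g(k+1)\}=\{f(k)-1,f(k)+1\}$ from the fact that $\sigma$ maps neighbourhoods onto neighbourhoods; summing and setting $\delta=f-g$ gives the recurrence $\delta(k-1)+2\delta(k)+\delta(k+1)=0$, which has only the zero $n$-periodic solution when $n$ is odd. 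I checked this computation and it is sound (the two-element set identities do give the sum identities in $\ZZ_n$, and the periodicity argument uses invertibility of $2$ modulo odd $n$ exactly where claimed). This is arguably cleaner than the antipodal-cycle argument and isolates precisely why odd $n$ is needed; your opening reduction (constancy of $\sigma\pi_2$ on fibres implies $\sigma$ permutes fibres) is also correct. But a correct proof of (f) does not discharge the obligation to prove (b)--(e), on which (f) depends.
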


\begin{proof}
In this proof operations on $V(C_n)=\{0,1,\ldots,n-1\}$ are performed modulo $n$. Note that for any $\{i,j\} \in E(\bfc)$, we have $j\equiv i+2\pmod{n}$ or $j\equiv i-2\pmod{n}$. Without loss of generality we may assume $j\equiv i+2\pmod{n}$.

(\ref{le:le2.1})
Assume that $u \in V(\Gamma)$, $\{i, j\} \in E(\bfc)$ and $\sigma \in \aut(\Gamma\times C_n)$. Set $(v,k) = (u,i)^\sigma$. Taking a vertex $(w,i+1)\in N_{\Gamma\times C_n}((u,i))$, we have $(w,i+1)^{\sigma\pi_2}=k-1$ or $k+1$. Since $\{(w,i+1), (u,i+2)\}$ is an edge of $\Gamma\times C_n$, $(u, i+2)^{\sigma}$ is adjacent to $(w, i+1)^\sigma$. Hence $(u, i+2)^{\sigma\pi_2}=k-2$, $k+2$ or $k$. Thus, either $(u, i+2)^{\sigma\pi_{2}} \in \{k+2, k-2\}$ or $(u, j)^{\sigma\pi_{2}}=k$. Note that $(u, i)^{\sigma\pi_{2}}\neq(u, j)^{\sigma\pi_{2}}$ by our assumption. Consider first the case where $(u,j)^{\sigma\pi_2}\equiv k+2\pmod{n}$. We aim to prove $(u,i)^{\sigma\pi_{1}}=(u,j)^{\sigma\pi_1}$. Suppose otherwise. Then $(u, j)^{\sigma}=(w, k+2)$ for some $w \in V(\Gamma)$. We have
\begin{align*}
|\ngc((u, i)^\sigma)\cap\ngc((u, j)^\sigma)|&=\,|\ngc((v, k))\cap\ngc((w,k+2))|\\
&=\,|\nga(v)\cap\nga(w)|\cdot|\ncy(k)\cap\ncy(k+2)|\\
&=\,|\nga(v)\cap\nga(w)|.
\end{align*}
On the other hand,
\begin{align*}
|\ngc((u, i)^\sigma)\cap\ngc((u, j)^\sigma)|&=\,|\ngc((u, i))\cap\ngc((u,j+2))|\\
&=\,|\nga(u)\cap\nga(u)|\cdot|\ncy(i)\cap \ncy(j+2)|\\
&=\,|\nga(u)\cap\nga(u)|\\
&=\, |\nga(u)|.
\end{align*}
Thus, $|N_{\Gamma}(u)|=|N_{\Gamma}(v)\cap N_{\Gamma}(w)|$, but this contradicts the assumption that $\Gamma$ is $R$-thin. This contradiction shows that we have $(u,i)^{\sigma\pi_{1}}=(u,j)^{\sigma\pi_1}$ when $(u,j)^{\sigma\pi_2}\equiv k+2\pmod{n}$. Similarly, we can prove $(u,i)^{\sigma\pi_{1}}=(u,j)^{\sigma\pi_1}$ when $(u,i+2)^{\sigma\pi_2}\equiv k-2\pmod{n}$.

(\ref{le:le2.2})
Assume that $u \in V(\Gamma)$, $\{i, j\} \in E(\bfc)$ and $\sigma \in \aut(\Gamma\times C_n)$. We proceed by induction on $r$. If $r=1$, then $k=i+2$. Thus $k=j$ and we obtain the result by the assumption. Suppose that, for some $m \ge 1$, the result is true for every $r$ between $1$ and $m-1$. That is, if $(u, i)^{\sigma\pi_{2}}=(u, j)^{\sigma\pi_{2}}$, then $(u,i)^{\sigma\pi_{2}}=(u,i+2r)^{\sigma\pi_{2}}$ for $1 \le r \leq m-1$. We aim to prove that the result is also true when $r=m$. By our hypothesis, it suffices to prove $(u, i+2m-2)^{\sigma\pi_{2}} = (u,i+2m)^{\sigma\pi_{2}}$. Suppose for a contradiction that $(u, i+2m-2)^{\sigma\pi_{2}} \neq (u,i+2m)^{\sigma\pi_{2}}$. Then by part~(\ref{le:le2.1}) we have $(u, i+2m-2)^{\sigma\pi_{1}}= (u, i+2m)^{\sigma\pi_{1}}$. Since $(u,i+2m-2)^{\sigma\pi_2}\neq (u, i+2m)^{\sigma\pi_2}$, either $(u, i+2m)^{\sigma\pi_2}=(u, i+2m-2)^{\sigma\pi_2}+2$ or $(u, i+2m)^{\sigma\pi_2}=(u, i+2m-2)^{\sigma\pi_2}-2$. Consider the former case first. In this case we have $(u, i+2m-2)^{\sigma\pi_{2}}=l$ and $(u, i+2m)^{\sigma\pi_{2}}\equiv l+2\pmod{n}$ for some $l \in V(C_n)$. So $(u,i+2m-4)^{\sigma\pi_{2}}=(u,i+2m-2)^{\sigma\pi_{2}}=l$. Thus there exists a vertex $(v, i+2m-3) \in \ngc((u,i+2m-4))\cap\ngc((u, i+2m-2))$ such that $(v, i+2m-3)^{\sigma\pi_{2}}\equiv l+1\pmod{n}$ and  $(v, i+2m-3)^{\sigma} \in \ngc((u,i+2m-4)^\sigma)\cap\ngc((u, i+2m-2)^\sigma)$. So $(v, i+2m-3)^\sigma \in \ngc((u, i+2m)^{\sigma})$, but this contradicts the fact that $(v, i+2m-3)$ is not adjacent to $(u, i+2m)$. Similarly, one can derive a contradiction in the case when $(u, i+2m)^{\sigma\pi_2}=(u, i+2m-2)^{\sigma\pi_2}-2$. Therefore, we must have $(u, i+2m-2)^{\sigma\pi_{2}} = (u,i+2m)^{\sigma\pi_{2}}$ and hence the result is true when $r = m$. This completes the proof by mathematical induction.

In the rest of the proof we assume that at least one of $\Gamma$ and $C_n$ is non-bipartite. Equivalently, $n$ is odd when $\Gamma$ is bipartite.

(c) Suppose that $u \in V(\Gamma)$, $\{i, j\} \in E(\bfc)$ and $\sigma \in \aut(\Gamma\times C_n)$
such that $(u,i)^{\sigma\pi_{2}}\neq(u,j)^{\sigma\pi_{2}}$, where $j\equiv i+2\pmod{n}$. Set $(u,i)^{\sigma\pi_{2}} = i_1$ and assume without loss of generality that $(u, i+2)^{\sigma\pi_{2}}\equiv i_1+2\pmod{n}$. Since both $\Gamma$ and $C_n$ are connected and at least one of them is non-bipartite, $\Gamma\times C_n$ is connected. So for any $(u,i) \in V(\Gamma\times C_n)$, there exists a walk from $(u,i)$ to $(u, i+1)$ with vertices in $V(\Gamma)\times \{i, i+1\}$, say, $(u,i)=(u_0, i), (u_1, i+1), (u_2, i),\ldots, (u_{2r},i), (u_{2r+1}, i+1)=(u, i+1)$, for some $r$ between $0$ and $|V(\Gamma)|-1$. Since $(u,i)^{\sigma\pi_{2}}=i_1$ and $(u, i+2)^{\sigma\pi_{2}}=i_1+2\pmod{n}$, for each $v \in \nga(u)$ we have $(v, i+1)^{\sigma\pi_{2}}\equiv i_1+1\pmod{n}$ and $(v, i-1)^{\sigma\pi_{2}}\equiv i_1-1\pmod{n}$. Combining this with $u_1\in N_\Gamma(u)$, we obtain that $(u_1, i+1)^{\sigma\pi_{2}}\equiv i_1+1\pmod{n}$.
By part \eqref{le:le2.2}, we have $(u_1, i+1)\neq (u_1, i+1+2r)$ for any integer $r$ between $0$ and $n/2$. Thus for each $w \in N_{\Gamma}(u_1)$ we have $(w,i)^{\sigma\pi_{2}}=i_1$ and $(w,i+2)^{\sigma\pi_{2}}\equiv i_1+2\pmod{n}$. Hence $(u_2, i)^{\sigma\pi_{2}}=i_1$. Applying successively the same argument to pairs of every other vertices in the walk above, we obtain that $(u_{2t+1}, i+1)^{\sigma\pi_{2}}\equiv i_1+1\pmod{n}$ for $0 \le t \le r$ and $(u_{2t}, i)^{\sigma\pi_{2}}=i_1$ for $1 \le t \le r$. Thus $(u, i+1)^{\sigma\pi_{2}}\equiv i_1+1\pmod{n}$. Similarly, we can prove that for any $0\leq t\leq n$, $(u,i+t)^{\sigma\pi_{2}}\equiv i_1+t\pmod{n}$ when $(u,i)^{\sigma\pi_{2}}=i_1$ and $(u, i+2)^{\sigma\pi_{2}}\equiv i_1+2\pmod{n}$. Hence $(u,k)^{\sigma\pi_{2}}\neq(u,l)^{\sigma\pi_{2}}$ for any distinct $k, l \in \{1, \ldots, n\}$.

(d) Let $\sigma \in \aut(\Gamma\times C_n)$. Assume that there exist $u \in V(\Gamma)$ and $\{i,j\} \in E(\bfc)$ such that $(u,i)^{\sigma\pi_{2}}\neq (u,j)^{\sigma\pi_{2}}$. We aim to prove $(v,i)^{\sigma\pi_{2}}  \neq(v,j)^{\sigma\pi_{2}}$ for all $v \in V(\Gamma)$. Since this inequality holds when $v = u$, we assume $v \ne u$ in the sequel. Since $(u,i)^{\sigma\pi_{2}}\neq (u,j)^{\sigma\pi_{2}}$, by part (c), we have $(u,k)^{\sigma\pi_{2}}\neq(u,l)^{\sigma\pi_{2}}$ for any distinct $k, l \in \{1,2,\ldots, n\}$. Thus, if $u$ and $v$ are adjacent in $\Gamma$, then $(u,i-1)^{\sigma\pi_{2}}$, $(u,i+1)^{\sigma\pi_{2}}$ and $(u,i+3)^{\sigma\pi_{2}}$ are pairwise distinct, and hence $(v,i)^{\sigma\pi_{2}}\neq (v,i+2)^{\sigma\pi_{2}}$. By part (c) again, we have $(v, k)^{\sigma\pi_{2}}\neq (v,l)^{\sigma\pi_{2}}$ for any distinct $k, l \in \{ 1, 2,\ldots, n\}$, and so $(v,i)^{\sigma\pi_{2}}\neq(v,j)^{\sigma\pi_{2}}$. If $u$ and $v$ are not adjacent in $\Gamma$, then there is a path in $\Gamma$ from $u$ to $v$ with length at least $2$, say, $u=u_0, u_1,\ldots, u_p = v$. As shown above, we have $(u_1, i)^{\sigma\pi_{2}}\neq (u_1,i+2)^{\sigma\pi_{2}}$. Similarly to the proof in the case when $u$ and $v$ are adjacent, we can prove $(u_2, i)^{\sigma\pi_2}\neq (u_2,i+2)^{\sigma\pi_2}$. Applying successively what we have proved to other vertices in the path $u=u_0, u_1,\ldots, u_p=v$, we obtain $(u_k, i)^{\sigma\pi_2}\neq (u_k, i+2)^{\sigma\pi_2}$ for $0 \le k \le p$. In particular, we have $(v, i)^{\sigma\pi_2}\neq (v, i+2)^{\sigma\pi_2}$. By part (c), we then obtain $(v,i)^{\sigma\pi_{2}}\neq(v, j)^{\sigma\pi_{2}}$ as desired.

(e) Assume that for any $u\in V(\Gamma)$ and $\sigma \in \aut(\Gamma\times C_n)$ there exists $\{i,j\} \in E(\bfc)$ such that $(u, i)^{\sigma\pi_{2}}\neq(u, j)^{\sigma\pi_{2}}$. Let $v, w \in V(\Gamma)$ be such that $d(v,w)=2r$ is even. Consider the case $r=1$ first. In this case we have $\nga(v)\cap\nga(w)\neq\emptyset$ and so we may take $x \in \nga(v)\cap\nga(w)$. For any $k\in V(C_n)$, set $W=\{(x,k-1), (x,k+1)\}$. Then $W \subseteq \ngc((v, k))\cap\ngc((w,k))$. Since $\sigma \in \aut(\Gamma\times C_n)$, we have $W^{\sigma}=\{(x, k-1)^{\sigma}, (x, k+1)^{\sigma}\}\subseteq\ngc((v, k)^\sigma)\cap\ngc((w,k)^\sigma)$. Since $(u,i)^{\sigma\pi_{2}}\neq(u,j)^{\sigma\pi_{2}}$, by part (d), we have $(x, i)^{\sigma\pi_{2}}\neq (x, j)^{\sigma\pi_{2}}$ for any $x \in V(\Gamma)$. Moreover, by part (c), we have $(x,k-1)^{\sigma\pi_{2}}\neq (x, k+1)^{\sigma\pi_{2}}$. Combining this with part (a), we obtain $(x, k-1)^{\sigma\pi_{1}}=(x, k+1)^{\sigma\pi_{1}}$. Note that $(v, k)^\sigma$ and $(w,k)^\sigma$ are in $\ngc((x, k-1)^\sigma)\cap \ngc((x, k+1)^\sigma)$ and $(x, k-1)^{\sigma\pi_{1}}=(x, k+1)^{\sigma\pi_{1}}$. Hence either $(v, k)^{\sigma\pi_{2}}=(w,k)^{\sigma\pi_{2}}$ or $C_n=C_4$. Since $n\neq 4$ by our assumption, we have $(v, k)^{\sigma\pi_{2}}=(w,k)^{\sigma\pi_{2}}$ as required. Now assume $r\geq 2$. Since $d(v,w)=2r$, there is a path in $\Gamma$ from $v$ to $w$ with length $2r$. Applying what we have proved to pairs of every other vertices in this path, we can obtain the desired result.

(f) Suppose that there exist $u \in V(\Gamma)$ and $\{i,j\} \in E(\bfc)$ such that $(u, i)^{\sigma\pi_{2}}\neq(u, j)^{\sigma\pi_{2}}$ for any $\sigma \in \aut(\Gamma\times C_n)$. We may assume $j\equiv i+2\pmod{n}$ without loss of generality. By part (d), we have $(v, i)^{\sigma\pi_{2}}\neq (v, j)^{\sigma\pi_{2}}$ for any $v \in V(\Gamma)$. Moreover, by part (c), we have $(v,k)^{\sigma\pi_{2}}\neq (v, l)^{\sigma\pi_{2}}$ for any distinct $k, l \in \{ 1, 2 \cdots, n-1\}$.

Consider first the case where $n$ is odd. We aim to prove $(v, k)^{\sigma\pi_{2}}=(w, k)^{\sigma\pi_{2}}$ for any distinct $v, w \in V(\Gamma)$, any $k \in V(C_n)$, and any $\sigma \in \autgc$. If there is a walk from $v$ to $w$ in $\Gamma$ with even length, then we obtain the result by part (e). Assume there is no such a walk in the sequel. Consider the case where $v$ and $w$ are adjacent first. In this case, for any $i \in V(C_{n})$, we can construct a cycle $C_{v,w}$ of length $2n$ in $\Gamma\times C_n$ containing $(v, i)$, namely
\[
C_{v,w}: (v, i), (w,i+1), (v,i+2),(w,i+3) \cdots, (v, i-1), (w, i),(v,i+1),\cdots,(w, i-1), (v,i).
\]
Since $(v, i)^{\sigma\pi_{2}}\neq(v, i+2)^{\sigma\pi_{2}}$ for any $\sigma\in \aut(\Gamma\times C_n)$, by part (a) we have $(v,i)^{\sigma\pi_{1}}=(v, i+2)^{\sigma\pi_{1}}$. Since $C_{v,w}$ is a cycle in $\Gamma\times C_{n}$ and $(v, r)$ and $(w,r)$ are antipodal in $C_{v,w}$ for $0 \le r \le n-1$, $C_{v,w}^\sigma$ is a cycle in $\Gamma\times C_{n}$ and $(v, i)^{\sigma\pi_{2}}$ and $(w, i)^{\sigma\pi_{2}}$ are antipodal in $C_{v,w}^{\sigma}$. Therefore, $(v, i)^{\sigma\pi_{2}}=(w, i)^{\sigma\pi_{2}}$. Now consider the case where $v$ and $w$ are not adjacent. Since $\Gamma$ is connected, there exists a walk from $v$ to $w$ of odd length in $\Gamma$, say, $v=v_0, v_1,\ldots, v_{2r+1}=w$, for some $r\ge 1$. Then for any $\sigma \in \aut(\Gamma\times C_{n})$, as shown above,  we have $(v_1, k)^{\sigma\pi_{2}}=(v_0, k)^{\sigma\pi_{2}}$ for any $k \in V(C_{n})$. Since there is a walk from $v_1$ to $v_{2r+1}$ with even length, we have $(v_1, k)^{\sigma\pi_{2}}=(v_{2r+1}, k)^{\sigma\pi_{2}}$ for any $k \in V(C_{n})$. Thus, $(v_0, k)^{\sigma\pi_{2}} = (v_{2r+1}, k)^{\sigma\pi_{2}}$, that is, $(v, k)^{\sigma\pi_{2}}=(w, k)^{\sigma\pi_{2}}$, for any $k \in V(C_{n})$, which implies that $\sigma\in \pgc$ for any $\sigma \in \autgc$. Hence $\autgc=\pgc$ when $n$ is odd.

Now consider the case where $n\ge 4$ is even. Since by our assumption at least one of $\Gamma$ and $C_n$ is non-bipartite, in this case $\Gamma$ must be non-bipartite. Set $\Delta=\Gamma\times C_{n}$ and $\mathcal{ B}=\{V(\Gamma)\times\{j\}: j\in V(C_{n})\}$. Then the quotient graph $\Delta_{\mathcal{B}}$ of $\Delta$ with respect to the partition $\mathcal{B}$ of $V(\Delta)$ is isomorphic to $C_{n}$. Since $n$ is even, $C_n$ is bipartite and hence $\Gamma\times C_n$ is bipartite. Moreover, since $\Gamma\times C_n$ is connected, any two vertices $(v, i), (w, i)\in V(\Gamma\times C_n)$ are joined by a path of even length in $\Gamma\times C_n$, which implies that there is a walk of even length from $v$ to $w$ in $\Gamma$. Thus, by part (e), we obtain that any $\sigma \in \autgc$ satisfies $(v, k)^{\sigma\pi_{2}}=(w, k)^{\sigma\pi_{2}}$ for any $v, w \in V(\Gamma)$ and $k \in V(C_{n})$, and hence any $\sigma \in \autgc$ belongs to $\mathrm{P}(\Gamma, C_{n})$. Therefore, $\aut(\Gamma\times C_{n})\subseteq \mathrm{P}(\Gamma, C_{n})$ and consequently $\aut(\Gamma\times C_{n}) = \mathrm{P}(\Gamma, C_{n})$ when $n$ is even.
\end{proof}

The following lemma plays a key role in our proof of Theorem \ref{th:thr3.7}. It will also be used to prove Theorems \ref{co:co3.3} and \ref{th:th3.4} in the next section.

\begin{lemma}
\label{le:ler3.3}
Let $\Gamma$ be an $R$-thin connected graph and $n \ge 3$ an integer with $n\neq 4$. Suppose that at least one of $\Gamma$ and $C_n$ is non-bipartite. Then the following statements hold:
\begin{enumerate}
\item
\label{it:it3.3.1}
if $n$ is compatible with $\Gamma$, then $\autgc=\pgc$;
\item
\label{it:it3.3.2}
if $n \ge 5$ is odd and for every $\{u,v\} \in E(\Gamma^*)$, $\nga(u)\cap\nga(v)$ is not an independent set of $\Gamma$, then $\autgc=\pgc$.
\end{enumerate}
\end{lemma}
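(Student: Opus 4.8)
The plan is to derive both parts from part~(f) of Lemma~\ref{le:ler3.2}: it is enough to exhibit one vertex $u_0\in V(\Gamma)$ and one edge $\{i_0,j_0\}$ of $\bfc$ with $(u_0,i_0)^{\sigma\pi_2}\neq(u_0,j_0)^{\sigma\pi_2}$ for \emph{every} $\sigma\in\aut(\Gamma\times C_n)$, since that lemma then gives $\autgc=\pgc$. I would take $\{i_0,j_0\}=\{0,2\}$ (an edge of $\bfc$ for every $n\ge3$) and $u_0$ arbitrary, assume for contradiction that $(u_0,0)^{\sigma\pi_2}=(u_0,2)^{\sigma\pi_2}$ for some $\sigma$, and use the respective hypothesis to reach a contradiction. (If $\Gamma$ is trivial the statement is immediate, so assume $|V(\Gamma)|\ge2$, and set $V(C_n)=\{0,\dots,n-1\}$.)

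The first step is to pin down the structure of such a $\sigma$. The contrapositive of part~(d) of Lemma~\ref{le:ler3.2} turns $(u_0,0)^{\sigma\pi_2}=(u_0,2)^{\sigma\pi_2}$ into $(v,0)^{\sigma\pi_2}=(v,2)^{\sigma\pi_2}$ for all $v\in V(\Gamma)$, and then the contrapositive of part~(c) gives $(v,i)^{\sigma\pi_2}=(v,j)^{\sigma\pi_2}$ for all $v$ and all $\{i,j\}\in E(\bfc)$; hence $m\mapsto(v,m)^{\sigma\pi_2}$ is constant on each connected component of $\bfc$. When $n$ is odd, $\bfc$ is a single $n$-cycle, so $(v,m)^\sigma=(\phi_v(m),k(v))$ for an injection $\phi_v\colon V(C_n)\to V(\Gamma)$ and a map $k\colon V(\Gamma)\to V(C_n)$; when $n$ is even (so $n\ge6$), $\bfc$ splits into two $(n/2)$-cycles and $\sigma$ collapses each column into (one or) two fibres in the same way. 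Since $n\neq4$, for an edge $\{m,m+2\}$ of $\bfc$ the vertices $m,m+2$ have the unique common neighbour $m+1$ in $C_n$, so $(v,m)$ and $(v,m+2)$ have exactly $|\nga(v)|$ common neighbours in $\Gamma\times C_n$; comparing the degrees and the numbers of common neighbours in $\Gamma\times C_n$ of $(v,m),(v,m+2)$ with those of their $\sigma$-images $(\phi_v(m),k(v)),(\phi_v(m+2),k(v))$ gives $\deg(\phi_v(m))=\deg(\phi_v(m+2))=2\,|\nga(\phi_v(m))\cap\nga(\phi_v(m+2))|$, i.e. $\{\phi_v(m),\phi_v(m+2)\}\in E(\Gamma^*)$. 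Thus $\phi_v$ carries each component of $\bfc$ bijectively onto a cycle of $\Gamma^*$ of length $\ell$, where $\ell=n$ for $n$ odd and $\ell=n/2$ for $n$ even, and $\ell\ge3$.

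For part~(a): every vertex in the image of $\phi_v$ (on any component) then lies on an $\ell$-cycle of $\Gamma^*$, so $\ell\in L(w)$ for all such $w$; and since $\sigma$ is a bijection of $V(\Gamma)\times V(C_n)$, these images over all $v$ exhaust $V(\Gamma)$, so $\ell\in L(w)$ for \emph{every} $w\in V(\Gamma)$. This directly contradicts the compatibility of $n$ with $\Gamma$, which asserts $\ell\notin L(u)$ for at least one $u$; hence no fibre-collapsing $\sigma$ exists and part~(f) of Lemma~\ref{le:ler3.2} applies. For part~(b), where $n\ge5$ is odd, I would instead argue as follows. Since $\sigma$ is an automorphism, for any edge $\{v,v'\}$ of $\Gamma$ we have $(v,m)\sim(v',m+1)$, hence $(\phi_v(m),k(v))\sim(\phi_{v'}(m+1),k(v'))$, hence $k(v)\sim k(v')$ in $C_n$; thus $k\colon\Gamma\to C_n$ is a graph homomorphism, and as $C_n$ is triangle-free for $n\ge5$, so is $\Gamma$. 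But $\{\phi_v(m),\phi_v(m+2)\}\in E(\Gamma^*)$, so by hypothesis $\nga(\phi_v(m))\cap\nga(\phi_v(m+2))$ contains an edge $\{z_1,z_2\}$ of $\Gamma$, and then $\{\phi_v(m),z_1,z_2\}$ is a triangle of $\Gamma$ --- a contradiction. Again part~(f) of Lemma~\ref{le:ler3.2} yields $\autgc=\pgc$.

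The main obstacle is the first step --- correctly identifying what a fibre-collapsing automorphism is forced to do --- together with converting that structure into a contradiction separately for each hypothesis: for part~(a) this is the cycle-length bookkeeping in $\Gamma^*$ measured against the definition of compatibility (and it is here that $n=4$ must be excluded, so that $\ell\ge3$), while for part~(b) it is the observation that the homomorphism $\Gamma\to C_n$ and the non-independence condition force $\Gamma$ to be simultaneously triangle-free and to contain a triangle. The remaining computations, namely counting degrees and common neighbours in a direct product, are routine.
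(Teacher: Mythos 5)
Your proposal is correct, and for part~(a) it is essentially the paper's own argument: assume a ``fibre-collapsing'' $\sigma$ exists, propagate the collapse over a whole component of $\bfc$ via parts~(b) and~(d) of Lemma~\ref{le:ler3.2}, use the degree/common-neighbour count in the direct product to show the first coordinates of the images trace an $\ell$-cycle in $\Gamma^*$ (with $\ell=n$ or $n/2$), and contradict compatibility before invoking part~(f). Your observation that surjectivity of $\sigma$ forces $\ell\in L(w)$ for \emph{every} $w$ is the same point the paper makes slightly less transparently by writing the chosen vertex $u$ as $(w,i)^{\sigma\pi_1}$.

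For part~(b), however, you take a genuinely different and noticeably cleaner route. The paper also starts from the collapsed $\sigma$ and the edge $\{v,w\}$ of $\Gamma^*$ with adjacent common neighbours $x,y$, but then it tracks the four second coordinates $k_1,\dots,k_4$ of the $\sigma$-images of $v,w,x,y$, derives a web of congruences $k_i\equiv k_j\pm1\pmod n$, and eliminates the cases $k_1\equiv k_2$, $k_1\equiv k_2\pm2$ one by one. You instead observe that for odd $n$ the collapse makes $v\mapsto(v,0)^{\sigma\pi_2}$ a well-defined graph homomorphism $\Gamma\to C_n$, so $\Gamma$ is triangle-free for $n\ge5$, while the non-independence hypothesis applied to the $\Gamma^*$-edge $\{\phi_v(m),\phi_v(m+2)\}$ produces a triangle $\{\phi_v(m),z_1,z_2\}$ in $\Gamma$. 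This replaces the paper's page of modular case analysis with a two-line structural contradiction, and it isolates exactly where $n\ge5$ (rather than $n=3$) is needed, namely the triangle-freeness of $C_n$; the paper's computation buys nothing extra here beyond what your homomorphism argument gives. Both proofs rest on the same preparatory Lemma~\ref{le:ler3.2}, so neither is more general, but yours is the more economical.
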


\begin{proof}
Set $V(C_n) = \{0, 1, \ldots, n-1\}$, in which operations are taken modulo $n$. Recall that for any $\{i,j\} \in E(\bfc)$, $j\equiv i+2\pmod{n}$ or $j\equiv i-2\pmod{n}$. Without loss of generality we may assume $j\equiv i+2\pmod{n}$ whenever $\{i, j\} \in E(\bfc)$. By Lemma \ref{le:ler3.2}(f), we only need to prove the existence of $u \in V(\Gamma)$ and $\{i,j\} \in E(\bfc)$ such that $(u,i)^{\sigma\pi_{2}}\neq(u,j)^{\sigma\pi_{2}}$ for any $\sigma \in \autgc$.

(\ref{it:it3.3.1})
We only prove the result when $n$ is even as the proof for odd $n$ is similar. Since $n$ is even and $n$ is compatible with $\Gamma$, we may take a vertex $u \in V(\Gamma)$ such that $n/2 \notin L(u)$. Suppose for a contradiction that there exist $(v,i)\in V(\Gamma\times C_n)$ and $\sigma\in \aut(\Gamma\times C_n)$ such that $(v,i)^{\sigma\pi_{2}}=(v,i+2)^{\sigma\pi_{2}}$. Then by Lemma~\ref{le:ler3.2}(d) we have $(w, i)^{\sigma\pi_{2}}=(w, i+2)^{\sigma\pi_{2}}$ for each $w\in V(\Gamma)$. Set $u = (w,i)^{\sigma\pi_{1}}$ and $m = (n/2) - 1$. By Lemma \ref{le:ler3.2}(\ref{le:le2.2}), for any $r$ between $0$ and $m$, we have $(w,i)^{\sigma\pi_{2}}=(w, k)^{\sigma\pi_{2}}$, where $k\equiv i+2r\pmod{n}$. Thus there are $n/2$ distinct vertices $u=u_0, u_1,\ldots, u_{m} \in V(\Gamma)$ and one vertex $l \in V(C_n)$ such that $(w,k)^{\sigma}=(u_r,l)$ where $k\equiv i+2r\pmod{n}$ for $0 \le r \le m$. Since $\sigma \in \aut(\Gamma\times C_n)$, it follows that $\deg((w, k))=\deg((u_r,l))$. Moreover, since $C_n$ is a regular graph with degree $2$, we obtain further that $\deg(w)=\deg(u_r)$ for $0 \le r \le m$. Note that for $0 \le r \le m$ we have
\begin{align*}
|\ngc((w, i+2r)^\sigma)\cap\ngc((w, i+2r+2)^\sigma)|&=\,|\ngc((u_r, l))\cap\ngc((u_{r+1},l))|\\
&=\,|\nga(u_r)\cap\nga(u_{r+1})|\cdot|\ncy(l)|\\
&=\,2|\nga(u_r)\cap\nga(u_{r+1})|.
\end{align*}
On the other hand,
\begin{align*}
|\ngc((w, i+2r)^\sigma)\cap\ngc((w, i+2r+2)^\sigma)|&=\,|\ngc((w, i+2r))\cap\ngc((w,i+2r+2))|\\
&=\,|\nga(w)\cap\nga(w)|\cdot|\ncy(i+2r)\cap \ncy(i+2r+2)|\\
&=\,|\nga(w)\cap\nga(w)|\\
&=\, |\nga(w)|.
\end{align*}
Thus $|N_{\Gamma}(w)|=2|N_{\Gamma}(u_r)\cap N_{\Gamma}(u_{r+1})|$ for $0 \le r \le m$. So there is a cycle in $\Gamma^*$ with length $n/2$ which contains $u$, but this is a contradiction. Therefore, we have $(w,i)^{\sigma\pi_{2}}\neq (w,j)^{\sigma\pi_{2}}$ for any $\{i, j\}\in E(\bfc)$. Since this holds for any $\sigma\in \aut(\Gamma\times C_n)$, by Lemma \ref{le:ler3.2}(e) we obtain that $(v, i)^{\sigma\pi_{2}}\neq(v, i+2)^{\sigma\pi_{2}}$ for any $(v,i) \in V(\Gamma\times C_n)$ and $\sigma \in \aut(\Gamma\times C_n)$. This together with Lemma \ref{le:ler3.2}(f) yields $\autgc=\pgc$.

(\ref{it:it3.3.2})
Suppose, for a contradiction, that there exist $(u,i) \in V(\Gamma\times C_n)$ and $\sigma \in \autgc$ such that $(u,i)^{\sigma\pi_{2}}=(u, i+2)^{\sigma\pi_{2}}$. Since $n$ is odd, by Lemma~\ref{le:ler3.2}(\ref{le:le2.2}), we have $(u,i)^{\sigma\pi_{2}}=(u,k)^{\sigma\pi_{2}}$ for any $k \in V(C_n)$. So there are vertices $(v, k), (w, k) \in V(\Gamma\times C_n)$ such that $(u, i)^{\sigma}=(v,k)$ and $(u,i+2)^\sigma=(w,k)$. Note that $\deg(u)=\deg(v)=\deg(w)$ and $\deg(u)=2|N_{\Gamma}(v)\cap N_{\Gamma}(w)|$. By our assumption, $N_{\Gamma}(v)\cap N_{\Gamma}(w)$ contains two adjacent vertices, say, $x$ and $y$. By parts (b) and (d) of Lemma \ref{le:ler3.2}, we obtain that $(v, l)^{\sigma\pi_{2}}=(v,l+2)^{\sigma\pi_{2}}$ and $(w, l)^{\sigma\pi_{2}}=(w,l+2)^{\sigma\pi_{2}}$ for any $l \in V(C_n)$. Moreover, since $n$ is odd, we have $(v, l)^{\sigma\pi_{2}}=(v,l+2r)^{\sigma\pi_{2}}$ and $(w, l)^{\sigma\pi_{2}}=(w,l+2r)^{\sigma\pi_{2}}$ for $0 \le r \le (n/2)-1$. Let $k_1 = (v,l)^{\sigma\pi_{2}}$ and $k_2 = (w, l)^{\sigma\pi_{2}}$. Since $N_{\Gamma}(v)\cap N_{\Gamma}(w) \neq \emptyset$ and $n>3$, either $k_1=k_2$ or $k_1\in \{k_2+2, k_2-2\}$.

By parts (b) and (d) of Lemma \ref{le:ler3.2} again, we also obtain that $(x, l)^{\sigma\pi_{2}}=(x,l+2r)^{\sigma\pi_{2}}$ and $(y, l)^{\sigma\pi_{2}}=(y,l+2r)^{\sigma\pi_{2}}$ for any $l \in V(C_n)$ and  $0 \le r \le (n/2)-1$. Let $k_3 = (x, l)^{\sigma\pi_{2}}$ and $k_4 = (y, l)^{\sigma\pi_{2}}$.

Since $x$ is adjacent to $y$, $(x, l)$ is adjacent to $(y, l+1)$ and $(y, l-1)$. Hence $(x, l)^{\sigma\pi_{2}}\equiv (y, l+1)^{\sigma\pi_{2}}+1\pmod{n}$ or $(x, l)^{\sigma\pi_{2}}\equiv (y, l-1)^{\sigma\pi_{2}}-1\pmod{n}$, that is, $k_3\equiv k_4-1\pmod{n}$ or $k_3\equiv k_4+1\pmod{n}$. Since $v$ is adjacent to $x$ and $y$, $(v, l)$ is adjacent to $(x, l-1)$ and $(y, l-1)$, which implies that $(v,l)^{\sigma\pi_{2}}$ is adjacent to $(x, l-1)^{\sigma\pi_{2}}$ and $(y, l-1)^{\sigma\pi_{2}}$.
Hence $(v, l)^{\sigma\pi_{2}}\equiv (x, l-1)^{\sigma\pi_{2}}+1\pmod{n}$ or $(v, l)^{\sigma\pi_{2}}\equiv (x, l-1)^{\sigma\pi_{2}}-1\pmod{n}$, that is, $k_1\equiv k_3+1\pmod{n}$ or $k_1\equiv k_3-1\pmod{n}$.
Similarly, we have $k_1\equiv k_4+1\pmod{n}$ or $k_1\equiv k_4-1\pmod{n}$.

Since $w$ is adjacent to $x$ and $y$, $(w, l)$ is adjacent to $(x, l-1)$ and $(y, l-1)$, which implies that $(w, l)^{\sigma\pi_{2}}$ is adjacent to $(x, l-1)^{\sigma\pi_{2}}$ and $(y,l-1)^{\sigma\pi_{2}}$. Thus $(w, l)^{\sigma\pi_{2}}\equiv (x, l-1)^{\sigma\pi_{2}}+1\pmod{n}$ or $(w, l)^{\sigma\pi_{2}}\equiv (x, l-1)^{\sigma\pi_{2}}-1\pmod{n}$, that is, $k_2 \equiv k_3+1\pmod{n}$ or $k_2 \equiv k_3-1\pmod{n}$.
Similarly, we have $k_2\equiv k_4+1\pmod{n}$ or $k_2\equiv k_4-1\pmod{n}$.

The two paragraphs above show that we have $k_1 \equiv k_2\pmod{n}$, $k_1 \equiv k_2 - 2\pmod{n}$, or $k_1 \equiv k_2 + 2\pmod{n}$. If $k_1\equiv k_2\pmod{n}$, then we may assume $k_1\equiv k_2\equiv k_3-1\pmod{n}$ and $k_1\equiv k_2\equiv k_4-1\pmod{n}$, which contradicts the fact that $k_3\equiv k_4+1\pmod{n}$ or $k_3\equiv k_4-1\pmod{n}$. Hence $k_1 \not \equiv k_2\pmod{n}$. Suppose that $k_1\equiv k_2 - 2\pmod{n}$. Then by $k_1\equiv k_3+1\pmod{n}$ or $k_3-1\pmod{n}$ and $k_1\equiv k_4+1\pmod{n}$ or $k_4-1\pmod{n}$, we obtain that $k_2\equiv k_3+1\pmod{n}$ or $k_3-3\pmod{n}$ and $k_2\equiv k_4+1\pmod{n}$ or $k_4-3\pmod{n}$. However, we have $k_2\equiv k_3-1\pmod{n}$ or $k_3+1\pmod{n}$ and $k_2\equiv k_4-1\pmod{n}$ or $k_4+1\pmod{n}$. Hence $k_2\equiv k_3+1\pmod{n}$ and $k_2\equiv k_4+1\pmod{n}$. Thus, $k_3\equiv k_4\pmod{n}$, which is a contradiction. Therefore, $k_1 \not \equiv k_2-2$. Similarly, we can prove that $k_1 \not \equiv k_2 + 2\pmod{n}$. This final contradiction shows that for any $(u,i) \in V(\Gamma\times C_n)$ and $\sigma \in \autgc$ we have $(u,i)^{\sigma\pi_{2}}\neq (u, i+2)^{\sigma\pi_{2}}$. Using this and Lemma \ref{le:ler3.2}, we obtain $\autgc=\pgc$ immediately.
\end{proof}

\begin{Proof}\textit{\ref{th:thr3.7}}.
Theorem \ref{th:thr3.7} follows from Lemmas \ref{le:ler3.6} and \ref{le:ler3.3} immediately. \qed
\end{Proof}

\section{Proofs of Theorems \ref{co:co3.3}, \ref{th:th3.4} and \ref{thm:coines}}
\label{sec:uaut}

In~\cite{wilson08}, Wilson introduced the concept of expected automorphisms of $\Gamma\times K_2$. We now generalize this concept from $\Gamma\times K_2$ to $\Gamma\times C_n$. Set $V(C_n) = \{0, 1, \ldots, n-1\}$. Recall that $\aut(C_n)$ is isomorphic to the dihedral group $\dn$. For any $\delta \in \dn$, let $\lbar{\delta}$ be the permutation of $V(\Gamma\times C_n)$ defined by
\[
(u,i)^{\lbar{\delta}}=(u, i^{\delta}), \text{ for } (u, i) \in V(\Gamma\times C_n).
\]
It is readily seen that $\lbar{\delta}$ is an automorphism of $\Gamma\times C_n$. Set $\lbar{\aut(C_n)} = \{\lbar{\delta}\,:\, \delta\in \dn\}$. Then $\lbar{\aut(C_n)}$ is a subgroup of $\aut(\Gamma\times C_n)$ isomorphic to $\aut(C_n)$. Similarly, for any $\sigma \in \aut(\Gamma)$, let $\lbar{\sigma}$ be the automorphism of $\Gamma\times C_n$ defined by
\[
(u,i)^{\lbar{\sigma}}=(u^{\sigma}, i), \text{ for } (u, i) \in V(\Gamma\times C_n).
\]
Set $\lbar{\aut(\Gamma)} = \{\lbar{\sigma}\,:\,\sigma\in\aut(\Gamma)\}$. Then $\lbar{\aut(\Gamma)}$ is a subgroup of $\aut(\Gamma\times C_n)$ isomorphic to $\aut(\Gamma)$. Note that $\lbar{\sigma}\ \lbar{\delta}=\lbar{\delta}\ \lbar{\sigma}$.

It is known that $\aut(\Gamma)\times \aut(C_n)$ is isomorphic to a subgroup of $\aut(\Gamma\times C_n)$. In fact, it is isomorphic to the subgroup
$$
\mathrm{R}(\Gamma, C_n) := \langle\lbar{\aut(\Gamma)},\lbar{\aut(C_n)}\rangle
$$
of $\aut(\Gamma\times C_n)$ generalized by $\lbar{\aut(\Gamma)}$ and $\lbar{\aut(C_n)}$. Note that $\mathrm{R}(\Gamma, C_n)$ is a subgroup of $\pgc$ and is the normalizer of $\lbar{\aut(C_n)}$ in $\autgc$. The elements of $\mathrm{R}(\Gamma, C_n)$ are called the {\em expected automorphisms} of $\Gamma\times C_n$, and the elements of $\mathrm{P}(\Gamma, C_n) \setminus  \mathrm{R}(\Gamma, C_n)$ are called the {\em unexpected automorphisms} of $\Gamma\times C_n$. It is readily seen that for any $\gamma \in \mathrm{R}(\Gamma, C_n)$, $u \in V(\Gamma)$, and $i, j \in V(C_n)$, we have $(u, i)^{\gamma \pi_{1}} = (u,j)^{\gamma\pi_{1}}$.
Of course, if $\autgc = \mathrm{R}(\Gamma, C_n)$, then $(\Gamma,C_n)$ is stable; otherwise, $(\Gamma,C_n)$ is unstable.

For a vertex $u$ of $\Gamma$ and a subgroup $H$ of $\aut(\Gamma)$, let $u^{H}=\{u^h:h\in H\}$ be the $H$-orbit on $V(\Gamma)$ containing $u$. Let $\Gamma/H$ be the quotient graph of $\Gamma$ with respect to the partition $\{u^{H}: u\in V(\Gamma)\}$ of $V(\Gamma)$ (\cite{wilson08}). That is, $\Gamma/H$ is the graph with vertex set $\{u^{H}: u\in V(\Gamma)\}$ in which $\{u^{H}, v^{H}\}$ is an edge if and only if $u^{H} \ne v^{H}$ and there exist $u' \in u^H$ and $v' \in v^H$ such that  $\{u', v'\} \in E(\Gamma)$. In particular, for $\gamma \in \aut(\Gamma)$, we write $\Gamma/\gamma$ in place of $\Gamma/\langle \gamma \rangle$. An \emph{arc} of $\Gamma$ is an ordered pair of adjacent vertices $(u, v)$ of $\Gamma$. Denote by $A(\Gamma)$ the set of arcs of $\Gamma$. The following concepts are all extracted from \cite{wilson08}.

\smallskip
\begin{defn}
\label{de:de3.2}
(\cite{wilson08})~Let $\Gamma$ be a connected graph.
\begin{enumerate}
\item
Let $\gamma \in \aut(\Gamma)$ be an involution and $\Gamma_1 = \Gamma/\gamma$. Let $\alpha_1 \in \aut(\Gamma_1)$ and let $\pi$ be the natural projection from $\Gamma$ to $\Gamma/\gamma$ (that is, $\pi$ sends $u \in V(\Gamma)$ to $u^{\langle\gamma\rangle} \in V(\Gamma/\gamma)$). A {\em covering permutation} of $\alpha_1$ is a permutation $\alpha$ of $V(\Gamma)$ such that $(u^\alpha)^\pi=(u^\pi)^{\alpha_1}$ for every $u \in V(\Gamma)$. An {\em anti-automorphism} of $\Gamma$ is a permutation $\alpha$ of $V(\Gamma)$ which commutes with $\gamma$ such that $\{u^\alpha, v^{\gamma\alpha}\} \in E(\Gamma)$ whenever $\{u,v\} \in E(\Gamma)$.
\item
Let $\phi$ be an automorphism of $\Gamma$ with order at least $3$. If $\{u^\phi,v^{\phi^{-1}}\} \in E(\Gamma)$ whenever $\{u,v\} \in E(\Gamma)$, then $\Gamma$ is called a {\em (generalized) cross-cover} of $\Gamma/\phi$.
\item
Let $\Gamma_1$ be a graph and $\alpha \in \aut(\Gamma_1)$. Let $H=\mathbb{Z}^n_2$ and $\gamma \in \aut(H)$. Let $L$ be a map from $V(\Gamma_1)$ to the set of subgroups of $H$ such that $L(v^\alpha)=L(v)^\gamma$ for all $v \in V(\Gamma_1)$. Let $\omega$ be a map from $A(\Gamma_1)$ to the set of subsets of $H$ such that $\omega(v, u)=\{x^{-1}: x \in \omega(u, v)\}$ for any $(u, v) \in A(\Gamma_1)$ and there is a fixed element $h_0 \in H \setminus (\cap_{v\in V}L(v))$ satisfying $\omega((u, v)^\alpha)=\omega(u, v)^\gamma+h_0$ for all $(u, v) \in A(\Gamma_1)$. Define $\Gamma=GV(\Gamma_1,H, L,\omega)$ to be the graph with vertex set $V(\Gamma)=\{(v,L(v)+h): v\in V(\Gamma_1), h\in H\}$ and arc set
\[
A(\Gamma)=\{((u, L(u)+h),(v, L(v)+h+x)): (u,v)\in A(\Gamma_1), h\in H, x \in \omega(u,v)\}.
\]
We call $\Gamma=GV(\Gamma_1,H, L,\omega)$ a {\em twist} of $\Gamma_1$.
\end{enumerate}
\end{defn}

The following lemma is extracted from \cite{lauri15} and \cite{wilson08}.

\begin{lemma}
\label{le:le3.11}
The following hold:
\begin{enumerate}
\item If a graph has an anti-automorphism, or is a cross-cover or twist of some graph, then it must be unstable (\cite[Theorems 2--4]{wilson08}).
\item If a graph is nontrivially unstable, then it has an anti-automorphism, or is a cross-cover or a twist of a graph (\cite[Theorem~5]{wilson08}).
\item If $\Gamma$ is an $R$-thin connected non-bipartite graph, then the following conditions are equivalent:
\begin{enumerate}
\item $\Gamma$ is unstable;
\item $\Gamma$ is nontrivially unstable;
\item $\Gamma$ has a nondiagonal two-fold automorphism;
\item $\Gamma$ has an anti-automorphism, or is a cross-cover or twist of some graph.
\end{enumerate}
\end{enumerate}
\end{lemma}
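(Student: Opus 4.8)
The plan is to deduce Lemma~\ref{le:le3.11} by assembling the relevant results of Wilson~\cite{wilson08} and Lauri \emph{et al.}~\cite{lauri15} together with Lemma~\ref{le:ler3.6}(a) proved earlier; the only place calling for a genuine (if short) argument is threading condition~(iii) through the equivalences in part~(c), everything else being a matter of restating cited theorems under the present hypotheses.

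Parts~(a) and~(b) require no new work. Part~(a) collects Theorems~2--4 of~\cite{wilson08}, which assert that a graph admitting an anti-automorphism, or occurring as a cross-cover or a twist of some graph (in the sense of Definition~\ref{de:de3.2}), is unstable; part~(b) is Theorem~5 of~\cite{wilson08}, its converse for nontrivially unstable graphs. I would note only that Wilson's stability of a graph $\Gamma$ is exactly the stability of the pair $(\Gamma,K_2)$ in our sense, so these statements transfer verbatim.

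For part~(c), fix an $R$-thin connected non-bipartite graph $\Gamma$. The equivalence (i)$\Leftrightarrow$(ii) is immediate from the definition recalled in the introduction, since ``$\Gamma$ is nontrivially unstable'' means ``$\Gamma$ is unstable and connected and non-bipartite and $R$-thin'', and the last three properties are assumed. Then (ii)$\Rightarrow$(iv) is part~(b) and (iv)$\Rightarrow$(i) is part~(a), so (i), (ii), (iv) are already mutually equivalent, and it remains to link (iii) to them. Here I would use that $D(\Gamma)=\Gamma\times K_2$ is connected (because $\Gamma$ is connected and non-bipartite) and is bipartite with the natural bipartition $\{V(\Gamma)\times\{0\},\,V(\Gamma)\times\{1\}\}$; since the bipartition of a connected bipartite graph is unique, every automorphism of $\Gamma\times K_2$ either fixes or interchanges these two blocks and hence leaves the partition $\{V(\Gamma)\times\{i\}:i\in V(K_2)\}$ invariant. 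Thus $\aut(\Gamma\times K_2)=\mathrm{P}(\Gamma,K_2)$, and Lemma~\ref{le:ler3.6}(a) gives that $(\Gamma,K_2)$, equivalently $\Gamma$, is unstable if and only if at least one $K_2$-automorphism of $\Gamma$ is nondiagonal, that is, if and only if $\Gamma$ has a nondiagonal two-fold automorphism. This is (i)$\Leftrightarrow$(iii); alternatively one may quote the corresponding equivalence directly from~\cite{lauri15}. With (i)$\Leftrightarrow$(iii) in hand, all four conditions of part~(c) are equivalent.

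I do not anticipate a real obstacle; the points needing care are purely organizational: confirming that the hypotheses ``$R$-thin, connected, non-bipartite'' are exactly what collapse ``unstable'' and ``nontrivially unstable'' for a single graph; verifying connectivity of the canonical double cover so that Lemma~\ref{le:ler3.6}(a) applies; and matching the terminology of Definition~\ref{de:de3.2} (anti-automorphism, cross-cover, twist) with the wording of Theorems~2--5 of~\cite{wilson08} so that parts~(a) and~(b) may be invoked as stated.
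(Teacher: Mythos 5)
Your proposal is correct and follows essentially the same route as the paper: parts (a) and (b) are quoted from Wilson, (i)$\Leftrightarrow$(ii) is the definition of nontrivial instability under the standing hypotheses, and (ii)$\Rightarrow$(iv)$\Rightarrow$(i) closes the cycle via (a) and (b). The only difference is that for (i)$\Leftrightarrow$(iii) the paper simply cites \cite[Theorem 3.2]{lauri15}, whereas you give a short self-contained derivation from Lemma~\ref{le:ler3.6}(a) using connectivity of $\Gamma\times K_2$ and uniqueness of the bipartition -- a valid argument that you also correctly flag as replaceable by the citation.
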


\begin{proof}
A nontrivially unstable graph is precisely an $R$-thin connected non-bipartite unstable graph. Thus an $R$-thin connected non-bipartite graph is nontrivially unstable if and only if it is unstable. By \cite[Theorem 3.2]{lauri15}, a graph is unstable if and only if it has a nondiagonal two-fold automorphism. Combining all these with the statements in (a) and (b), we conclude that conditions (i)--(iv) in (c) are equivalent for $R$-thin connected non-bipartite graphs.
\end{proof}

Note that $\pgs \cong \auts(\Gamma)\rtimes\aut(\Sigma)$ for any graph pair $(\Gamma,\Sigma)$ (see \cite[Lemma 2.4]{qin211}). So, for any $\sigma \in \pgs$, there exist $(\aln) \in \aut_{\Sigma}(\Gamma)$ and $\delta \in \aut(\Sigma)$ such that $\sigma$ corresponds to $((\aln), \delta) \in \auts(\Gamma)\rtimes\aut(\Sigma)$. In the case when $\Sigma = C_n$, $(\aln)$ is uniquely determined by $\sigma$ and $(u,i)^{\sigma\pi_{1}} = u^{\alpha_i}$ holds for any $(u, i) \in V(\Gamma\times C_n)$.

\delete
{
For any $i \in V(C_{2k}) = \{0, 1, \ldots, 2k-1\}$, let $\Gamma_{i}$ be the subgraph of $\Gamma\times C_{2k}$ induced by $V(\Gamma)\times\{i,i+1\}$. That is, $V(\Gamma_{i}) = V(\Gamma)\times\{i,i+1\}$ and $E(\Gamma_{i}) = \{\{(u, i), (v, i+1)\}: \{u, v\} \in E(\Gamma)\}$. For any $\sigma \in \mathrm{P}(\Gamma, C_{2k})$ and $i \in V(C_{2k})$, define $\sigma_i$ to be the automorphism of $\Gamma_{i}$ induced by $\sigma$; that is, $\sigma_i$ is defined by $(u,i)^{\sigma_{i}}=(u, i)^{\sigma}$ and $(u,i+1)^{\sigma_{i}}=(u, i+1)^{\sigma}$ for $(u, i), (u, i+1) \in V(\Gamma_i)$.

\begin{lemma}
\label{le:le3.9}
Let $\Gamma$ be an $R$-thin connected non-bipartite graph and $k \ge 3$ an integer. Set $V(C_{2k}) = \{1, \ldots, 2k\}$ and let $\sigma \in \mathrm{P}(\Gamma, C_{2k})$. If $\sigma_i$ is an expected automorphism of $\Gamma_i$ for every $i \in V(C_{2k})$, then $\sigma$ is an expected automorphism of $\Gamma\times C_{2k}$.
\end{lemma}

\begin{proof}
Suppose to the contrary that $\sigma$ is an unexpected automorphism of $\Gamma\times C_{2k}$. Then there are distinct $(u, i), (u, j) \in V(\Gamma)\times\{i, j\}$ such that $(u,i)^\sigma=(u_1,i')$ and $(u,j)^\sigma=(u_2, j')$ for some different vertices $u_1, u_2$ of $\Gamma$ and vertices $i', j'$ of $C_{2k}$. In particular, there exists $l\in V(C_{2k})$ such that $(u, l-1)^{\sigma\pi_1}\neq(u,l)^{\sigma\pi_1}$. So $\sigma_{l-1}$ is an unexpected automorphism of $\Gamma_{l-1}$, but this contradicts the assumption that $\sigma_i$ is an expected automorphism of $\Gamma_i$ for every $i\in V(C_{2k})$.
\end{proof}
}

\begin{lemma}
\label{th:th3.5}
Let $\Gamma$ be an $R$-thin connected non-bipartite graph and $k \ge 3$ an integer. If $2k$ is compatible with $\Gamma$ and $(\Gamma, C_{2k})$ is nontrivially unstable, then $\Gamma$ has an anti-automorphism, or is a cross-cover or twist of some graph.
\end{lemma}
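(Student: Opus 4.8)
The plan is to reduce the conclusion to the existence of a \emph{nondiagonal two-fold automorphism} of $\Gamma$ and then to quote Lemma~\ref{le:le3.11}. The first step is to check that the hypotheses of Lemma~\ref{le:ler3.3}(\ref{it:it3.3.1}) are satisfied: $k \ge 3$ gives $2k \ge 6$, so in particular $2k \neq 4$; $\Gamma$ is non-bipartite, so at least one of $\Gamma$ and $C_{2k}$ is non-bipartite; and $2k$ is compatible with $\Gamma$ by assumption. Hence $\aut(\Gamma \times C_{2k}) = \mathrm{P}(\Gamma, C_{2k})$.

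Next, since $(\Gamma, C_{2k})$ is nontrivially unstable it is in particular unstable, so Lemma~\ref{le:ler3.6}(\ref{it:it3.6.1}) supplies a nondiagonal $C_{2k}$-automorphism $(\alpha_0, \ldots, \alpha_{2k-1})$ of $\Gamma$. I would then extract a two-fold automorphism from it. Because $C_{2k}$ is connected, nondiagonality forces $\alpha_i \neq \alpha_{i+1}$ for at least one edge $\{i, i+1\}$ of $C_{2k}$ (alternatively this is immediate from Lemma~\ref{re:re1.1}(c), using that $\Gamma$ is $R$-thin, which in fact gives $\alpha_i \neq \alpha_j$ for \emph{every} edge $\{i,j\}$ of $C_{2k}$); after relabelling we may assume $\alpha_0 \neq \alpha_1$. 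Applying Proposition~\ref{th:th1.1} with $\Delta$ the subgraph of $C_{2k}$ consisting of the single edge $\{0,1\}$ (which is isomorphic to $K_2$), the pair $(\alpha_0, \alpha_1)$ is a two-fold automorphism of $\Gamma$, and it is nondiagonal since $\alpha_0 \neq \alpha_1$. Finally, $\Gamma$ is $R$-thin, connected and non-bipartite and admits a nondiagonal two-fold automorphism, so the implication (iii)$\Rightarrow$(iv) in Lemma~\ref{le:le3.11}(c) yields that $\Gamma$ has an anti-automorphism, or is a cross-cover or twist of some graph, as required.

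Since essentially all of the technical work has already been done in Lemmas~\ref{le:ler3.3}, \ref{le:ler3.6}, \ref{re:re1.1} and \ref{le:le3.11} and in Proposition~\ref{th:th1.1}, I do not anticipate any serious obstacle; the argument is a matter of assembling these ingredients in the right order. The single point that deserves care is the passage from a nondiagonal $C_{2k}$-automorphism to a nondiagonal two-fold automorphism: ``nondiagonal'' only asserts that two of the coordinates differ, not that two \emph{adjacent} coordinates differ, and it is the connectivity of $C_{2k}$ (equivalently, via Lemma~\ref{re:re1.1}(c), the $R$-thinness of $\Gamma$) that upgrades this to an inequality across an edge of $C_{2k}$ — precisely the input Proposition~\ref{th:th1.1} needs in order to produce a genuine two-fold automorphism of $\Gamma$.
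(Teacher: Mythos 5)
Your proof is correct, but it takes a genuinely shorter route than the paper's. Both arguments begin identically: compatibility plus Lemma~\ref{le:ler3.3}(\ref{it:it3.3.1}) gives $\autge=\pge$, and instability then yields a nondiagonal $C_{2k}$-automorphism $(\alpha_0,\ldots,\alpha_{2k-1})$ of $\Gamma$ (the paper builds it by hand from an unexpected automorphism $\sigma$ via $u^{\alpha_i}=(u,i)^{\sigma\pi_1}$ rather than citing Lemma~\ref{le:ler3.6}(\ref{it:it3.6.1}), but this is cosmetic). Where you diverge is in extracting the two-fold automorphism. The paper conjugates by the reflection and the rotation-by-two of $C_{2k}$ and uses the $R$-thinness of $\Gamma$ twice (via Lemma~\ref{re:re1.1}) to pin down the full alternating form $(\alpha_1,\alpha_2,\alpha_1,\alpha_2,\ldots,\alpha_1,\alpha_2)$ of the $C_{2k}$-automorphism before reading off $(\alpha_1,\alpha_2)$ as a nondiagonal two-fold automorphism. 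You instead note that Lemma~\ref{re:re1.1}(c) already forces $\alpha_i\neq\alpha_j$ across every edge of $C_{2k}$, and that Proposition~\ref{th:th1.1} applied to the single-edge subgraph $K_2\subseteq C_{2k}$ restricts the tuple to a nondiagonal two-fold automorphism directly; the closing appeal to the equivalence (iii)$\Rightarrow$(iv) of Lemma~\ref{le:le3.11}(c) is the same in substance as the paper's appeal to Lemma~\ref{le:le3.11}(b). Your shortcut is valid and more economical; what the paper's longer computation buys is the precise alternating structure of the $C_{2k}$-automorphisms arising from unexpected automorphisms of $\Gamma\times C_{2k}$, which is stronger information than the lemma's conclusion requires.
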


\begin{proof}
Set $V(C_{2k}) = \{1, \ldots, 2k\}$. Since $\Gamma$ is an $R$-thin connected non-bipartite graph and $2k$ is compatible with $\Gamma$, by Lemma \ref{le:ler3.3}, we have $\autge=\pge$. Since $(\Gamma, C_{2k})$ is nontrivially unstable, there are unexpected automorphisms of $\Gamma \times C_{2k}$. Consider an arbitrary unexpected automorphism $\sigma \in \autge$. For each $i \in V(C_{2k})$, define $u^{\alpha_i} = (u,i)^{\sigma\pi_1}$ for $u \in V(\Gamma)$. Note that for any $\{u,v\}\in E(\Gamma)$ and $\{i, i+1\}\in E(C_{2k})$ we have $\{u^{\alpha_i}, v^{\alpha_j}\}=\{(u,i)^{\sigma\pi_1}, (v, i+1)^{\sigma\pi_1}\}$. Since $\{(u,i), (v, i+1)\} \in E(\Gamma\times C_{2k})$ and $\sigma \in \aut(\Gamma\times C_{2k})$, we see that $\{(u, i)^{\sigma}, (v, i+1)^{\sigma}\} \in E(\Gamma\times C_{2k})$ and so $\{u^{\alpha_i}, v^{\alpha_{i+1}}\} \in E(\Gamma)$. Hence $\{u,v\} \in E(\Gamma)$ if and only if $\{u^{\alpha_i}, v^{\alpha_j}\} \in E(\Gamma)$. So $(\alpha_1,\ldots,\alpha_{2k})$ is a $C_{2k}$-automorphism of $\Gamma$. Since $\sigma$ is an unexpected automorphism, there exist distinct vertices $(u, i), (u, j) \in V(\Gamma\times C_{2k})$ such that $(u,i)^{\sigma\pi_1} \neq (u,j)^{\sigma\pi_1}$, that is, $u^{\alpha_i} \neq u^{\alpha_j}$ and therefore $\alpha_i\neq\alpha_j$. Let $\delta$ be the unique automorphism of $C_{2k}$ satisfying $i^\delta=2k-i \pmod{2k}$ for $i\in V(C_{2k})$. By Lemma \ref{le:le1.1}, $(\alpha_1,\ldots,\alpha_{2k})^\delta = (\alpha_1, \alpha_{2k}, \alpha_{2k-1},\ldots,\alpha_{k+2}, \alpha_{k+1}, \alpha_{k},\ldots, \alpha_2)$ is a $C_{2k}$-automorphism of $\Gamma$. Since $\aut_{C_{2k}}(\Gamma)$ is a group, $(\alpha^{-1}_1,\alpha^{-1}_{2k},\alpha^{-1}_{2k-1}, \ldots, \alpha^{-1}_{k+2}, \alpha^{-1}_{k+1},\ldots,\alpha^{-1}_2)$ is a $C_{2k}$-automorphism of $\Gamma$. Hence $(\id,\alpha_2\alpha^{-1}_{2k}, \ldots, \id, \alpha_{k+2}\alpha^{-1}_{k},\ldots,\alpha_{2k}\alpha^{-1}_2)$ is also a $C_{2k}$-automorphism of $\Gamma$. Since $\Gamma$ is $R$-thin, by Lemma \ref{re:re1.1}, this $C_{2k}$-automorphism of $\Gamma$ must be diagonal. This implies that  $\alpha_2=\alpha_{2k}, \alpha_{3}=\alpha_{2k-1}, \ldots,\alpha_{k+2}=\alpha_{k}$. Thus $(\alpha_1, \ldots, \alpha_{2k}) = (\alpha_1, \alpha_2,\alpha_3,\ldots, \alpha_{k},\alpha_{k+1},\alpha_{k},\alpha_{k-1},\ldots,\alpha_3,\alpha_2)$. Let $\tau$ be the unique automorphism of $C_{2k}$ satisfying $i^\tau\equiv i+2\pmod{2k}$ for $i\in V(C_{2k})$. Then $(\alpha_1, \alpha_{2}, \alpha_{3},\ldots,\alpha_{k}, \alpha_{k+1}, \alpha_{k},\alpha_{k-1},\ldots,\alpha_3, \alpha_2)^\tau = (\alpha_3, \alpha_4, \ldots, \alpha_{k}, \alpha_{k-1}, \alpha_{k-2}, \ldots, \alpha_1, \alpha_2)$, which, by Lemma \ref{le:le1.1}, is a $C_{2k}$-automorphism of $\Gamma$. Hence $(\alpha_3^{-1}, \alpha_4^{-1}, \ldots, \alpha_{k}^{-1}, \alpha_{k-1}^{-1}, \alpha_{k-2}^{-1}, \ldots, \alpha_1^{-1}, \alpha_2^{-1})$ is a $C_{2k}$-automorphism of $\Gamma$ and so $(\alpha_1\alpha_3^{-1}, \alpha_2\alpha_4^{-1}, \ldots, \id, \alpha_{k+1}\alpha_{k-1}^{-1}, \alpha_{k}\alpha_{k-2}^{-1}, \ldots, \alpha_{3}\alpha_1^{-1}, \id)$ is also a $C_{2k}$-automorphism of $\Gamma$. Since $\Gamma$ is $R$-thin, $\alpha_1=\alpha_3=\cdots=\alpha_{2k-1}$ and $\alpha_{2}=\alpha_4=\cdots=\alpha_{2k}$. So we have proved that for any unexpected automorphism of $\Gamma \times C_{2k}$ there corresponds a $C_{2k}$-automorphism of $\Gamma$ with the form $(\alpha_1,\alpha_{2},\ldots,\alpha_1, \alpha_2)$. Note that $\alpha_1\neq\alpha_2$ and $\{u^{\alpha_1}, v^{\alpha_2}\} \in E(\Gamma)$ for any $\{u,v\}\in E(\Gamma)$. Thus $(\alpha_1, \alpha_2)$ is a two-fold automorphism of $\Gamma$. Since $\alpha_1 \neq \alpha_2$, this two-fold automorphism is nondiagonal and hence $\Gamma$ is unstable. Thus, by Lemma \ref{le:le3.11}(b), $\Gamma$ has an anti-automorphism, or is a cross-cover or twist of some graph.
\end{proof}

Now we are ready to prove Theorems \ref{co:co3.3}, \ref{th:th3.4} and \ref{thm:coines}.

\smallskip
\begin{Proof}\textit{\ref{co:co3.3}}.
(a) (``only if" part)~Set $V(C_{2k}) = \{0, 1, \ldots, 2k-1\}$. Suppose $(\Gamma, K_2)$ is unstable. Since $C_4$ is not $R$-thin, $(\Gamma, C_4)$ is unstable, and hence the result is true when $k=2$. It remains to prove that $(\Gamma,C_{2k})$ is unstable for all $k \ge 3$. Since $(\Gamma, K_2)$ is unstable, there is a nondiagonal two-fold automorphism $(\alpha_1,\alpha_2)$ of $\Gamma$. Define a permutation $\sigma$ of $V(\Gamma\times C_{2k})$ as follows: For any $(u, i) \in V(\Gamma\times C_{2k})$, if $i$ between $0$ and $2k-1$ is odd, then set $(u,i)^{\sigma} = (u^{\alpha_2}, i)$; if $i$ between $0$ and $2k-1$ is even, then set $(u,i)^{\sigma} = (u^{\alpha_1}, i)$. Since $\alpha_1$ and $\alpha_2$ are permutations of $V(\Gamma)$, $\sigma$ is indeed a permutation of $V(\Gamma\times C_{2k})$.

We now prove $\sigma\in \mathrm{P}(\Gamma, C_{2k})$. To this end we first prove $\sigma\in \aut(\Gamma\times C_{2k})$. For any $\{(u, i), (v, i+1)\} \in E(\Gamma\times C_{2k})$,
$\{(u, i)^\sigma, (v, i+1)^\sigma\}$ is equal to $\{(u, i)^{\alpha_1}, (v, i+1)^{\alpha_2}\}=\{(u^{\alpha_1}, i),(v^{\alpha_2}, i+1)\}$ or $\{(u, i)^{\alpha_2}, (v, i+1)^{\alpha_1}\} = \{(u^{\alpha_2}, i), (v^{\alpha_1}, i+1)\}$. Thus $\{(u, i), (v, i+1)\}^\sigma \in E(\Gamma\times C_{2k})$ by the definition of $(\alpha_1,\alpha_2)$. Hence $\sigma$ maps edges of $\Gamma\times C_{2k}$ to edges of $\Gamma\times C_{2k}$. Moreover, since $\sigma$ is a permutation of $V(\Gamma\times C_{2k})$, it maps different edges of $\Gamma\times C_{2k}$ to different edges of $\Gamma\times C_{2k}$. Thus, $\sigma \in \aut(\Gamma\times C_{2k})$. By the definition of $\sigma$, for any $(u, i) \in V(\Gamma\times C_{2k})$, we have $(u, i)^{\sigma\pi_{2}}=i$. Hence $\sigma \in \mathrm{P}(\Gamma, C_{2k})$. Moreover, since $(\alpha_1,\alpha_2)$ is nondiagonal, we have $\alpha_1\neq \alpha_2$ and so there exists $v \in V(\Gamma)$ such that $v^{\alpha_1}\neq v^{\alpha_2}$. This together with the definition of $\sigma$ implies that $(v,2i)^{\sigma \pi_1} \neq (v,2i+1)^{\sigma \pi_1}$ for $0\leq i\leq k$. Let $\delta$  be the unique automorphism of $C_{2k}$ satisfying $i^\delta\equiv i+1\pmod{2k}$ for any $i\in V(C_{2k})$. Then $\overline{\delta}\in \overline{\aut(C_{2k})}$ is an automorphism of $\Gamma\times C_{2k}$ satisfying $(u, i)^{\bar{\delta}}=(u,i+1)$ for any $(u, i)\in V(\Gamma\times C_{2k})$. We have $\sigma\bar{\delta}\neq\bar{\delta}\sigma$. Thus $\sigma$ is an unexpected automorphism of $\Gamma\times C_{2k}$. Hence $(\alpha_1,\alpha_2,\alpha_1,\alpha_2,\ldots,\alpha_1,\alpha_2 )$ is a nondiagonal $C_{2k}$-automorphism of $\Gamma$. Therefore, by \cite[Lemma~2.6]{qin211}, $\Gamma\times C_{2k}$ is unstable.

(b) (``only if" part)~Let $k \ge 3$ be an integer such that $2k$ is compatible with $\Gamma$. If $(\Gamma, C_{2k})$ is nontrivially unstable, then by Lemma \ref{th:th3.5}, $\Gamma$ satisfies condition (iv) in Lemma \ref{le:le3.11}(c). Hence, by Lemma \ref{le:le3.11}(c), $\Gamma$ is unstable, that is, $(\Gamma, K_2)$ is unstable.

(a) (``if" part)~Suppose $(\Gamma, C_{2k})$ is unstable for every integer $k \geq 2$. Since $\Gamma$ is a finite graph, there exists an integer $l \geq 3$ such that $2l$ is compatible with $\Gamma$. (In fact, we can take $l$ to be an integer such that $l \notin L(u)$ for every vertex $u\in V(\Gamma)$.) Since $(\Gamma, C_{2l})$ is unstable by our assumption, by the ``only if" part of (b), $(\Gamma, K_2)$ is unstable.

(b) (``if" part)~Let $k \ge 3$ be an integer. Suppose $(\Gamma, K_2)$ is unstable. Then by the ``only if" part of (a), $(\Gamma, C_{2k})$ is unstable. Since $2k$ is compatible with $\Gamma$, $\Gamma$ and $C_{2k}$ must be coprime. Note that $C_{2k}$ is $R$-thin as $2k \ge 6$ and $\Gamma$ is non-bipartite by our assumption. Hence $(\Gamma, C_{2k})$ is nontrivially unstable.

(c) Suppose $k \ge 3$ is an integer such that $2k$ is compatible with $\Gamma$ and $(\Gamma, C_{2k})$ is nontrivially unstable. Then, by (b), $(\Gamma, K_2)$ is unstable. Hence, by the ``only if" part of (a), $(\Gamma, C_{2l})$ is unstable for every $l \ge 3$. \qed
\end{Proof}

\smallskip
\begin{Proof}\textit{\ref{th:th3.4}}.
Set $V(C_{2k+1}) = \{0, 1, \ldots, 2k\}$. Since $\Gamma$ is an $R$-thin connected graph and $C_{2k+1}$ is an odd cycle, both $\Gamma$ and $C_{2k+1}$ are $R$-thin and at least one of them is non-bipartite. Since $2k+1$ is compatible with $\Gamma$, we have $\autgo=\pgo$ by Lemma \ref{le:ler3.3}(a). Suppose for a contradiction that $(\Gamma, C_{2k+1})$ is unstable. Then $(\Gamma, C_{2k+1})$ is nontrivially unstable. So there exists an unexpected automorphism $\sigma$ of $\Gamma\times C_{2k+1}$. Define $u^{\alpha_i} = (u,i)^{\sigma \pi_1}$, $u \in V(\Gamma)$, for each $i \in V(C_{2k+1})$. Note that for any $\{u, v\}\in E(\Gamma)$ and $\{i, i+1\}\in E(C_{2k+1})$, we have $\{u^{\alpha_i}, v^{\alpha_{i+1}}\}=\{(u, i)^{\sigma\pi_1}, (v,i+1)^{\sigma\pi_1}\}$. Since $\sigma \in \aut(\Gamma\times C_{2k+1})$ and $\{(u, i), (v, i+1)\} \in E(\Gamma\times C_{2k+1})$, we obtain that $\{u,v\} \in E(\Gamma)$ if and only if $\{(u,i)^{\sigma\pi_1}, (v, i+1)^{\sigma\pi_1}\} \in E(\Gamma)$. So $(\alpha_1, \alpha_2,\ldots,\alpha_{2k+1})$ is a $C_{2k+1}$-automorphism of $\Gamma$. Since $\sigma$ is an unexpected automorphism of $\Gamma\times C_{2k+1}$, there exists an automorphism $\bar{\delta}\in \overline{\aut(C_{2k+1})}$ such that $\sigma\bar{\delta}\neq\bar{\delta}\sigma$ and hence $\bar{\delta}^{-1}\sigma\bar{\delta}\neq \sigma$. Suppose that $(u, i)^{\bar{\delta}}=(u,j)$ for some $(u, i), (u, j)\in V(\Gamma\times C_{2k+1})$. Then $(u, i)^{\sigma\bar{\delta}}=((u,i)^{\sigma\pi_1},i)^{\bar{\delta}}=((u,i)^{\sigma\pi_1},j)\neq(u, i)^{\bar{\delta}\sigma}=(u,j)^\sigma$. Thus there exist distinct vertices $(u, i), (u, j) \in V(\Gamma\times C_{2k+1})$ such that $(u,i)^{\sigma\pi_1}\neq(u,j)^{\sigma\pi_1}$ and hence $\alpha_i\neq\alpha_j$. Hence $(\alpha_1,\ldots,\alpha_{2k+1})$ is nondiagonal. Let $\tau$ be the permutation of $V(C_{2k+1})$ such that $i^\tau=2k+1-i\pmod{2k+1}$ for any $i\in V(C_{2k+1})$. Then $\{i^\tau, j^\tau\}=\{2k+1-i,2k+1-j\}$ for any $\{i, j\}\in E(C_{2k+1})$. Since $\{i,j\} \in E(C_{2k+1})$, we have $j\equiv i+1\pmod{2k+1}$ or $j\equiv i-1\pmod{2k+1}$. Without loss of generality, we may assume $j\equiv i+1\pmod{2k+1}$. We can easily verify that $\{2k+1-i,2k+1-j\}=\{2k+1-i, 2k-i\}$ is also an edge of $C_{2k+1}$. Since $\tau$ is a permutation of $V(C_{2k+1})$ and maps edges of $C_{2k+1}$ to edges of $C_{2k+1}$, we see that $\tau$ is an automorphism of $C_{2k+1}$. Since $\tau$ maps $i$ to $2k+1-i\pmod{2k+1}$ for each $i\in V(C_{2k+1})$, we have $(\alpha_1,\ldots,\alpha_{2k+1})^\tau=(\alpha_{1},\alpha_{2k+1},\ldots, \alpha_{k+2}, \alpha_{k+1}, \ldots,\alpha_{2})$. Since $(\alpha_1,\ldots,\alpha_{2k+1})\in \aut_{C_{2k+1}}(\Gamma)$, it follows from Lemma \ref{le:le1.1} that $(\alpha_1,\alpha_{2k+1},\ldots, \alpha_{k+2}, \alpha_{k+1}, \ldots,\alpha_{2})$ is a $C_{2k+1}$-automorphism of $\Gamma$. Moreover, since $\aut_{C_{2k+1}}(\Gamma)$ is a group, $(\alpha^{-1}_1,\alpha^{-1}_{2k+1}, \ldots, \alpha^{-1}_{k+2}, \alpha^{-1}_{k+1},\ldots,\alpha^{-1}_2)$ is also a $C_{2k+1}$-automorphism of $\Gamma$. So $(\id,\alpha_2\alpha^{-1}_{2k+1}, \ldots, \alpha_{k+1}\alpha^{-1}_{k+2}, \alpha_{k+2}\alpha^{-1}_{k+1},\ldots,\alpha_{2k+1}\alpha^{-1}_2)$ is a $C_{2k+1}$-automorphism of $\Gamma$. Since $\Gamma$ is $R$-thin, by Lemma \ref{re:re1.1}, this $C_{2k+1}$-automorphism of $\Gamma$ must be diagonal. That is, $\alpha_i = \alpha_{2k+3-i}$ for $2 \le i \le 2k+1$. So $(\alpha_1,\ldots,\alpha_{2k+1})=(\alpha_1, \alpha_2,\ldots,\alpha_{k+1},\alpha_{k+1},\ldots,\alpha_2)$. Obviously, the permutation $\phi$ of $V(C_{2k+1})$ which maps $i$ to $i-1\pmod{2k+1}$ for each $i \in V(C_{2k+1})$ is an automorphism of $C_{2k+1}$. By Lemma \ref{le:le1.1} again, it follows that $(\alpha_1, \alpha_2,\ldots,\alpha_{k+1},\alpha_{k+1},\ldots,\alpha_2)^\phi=(\alpha_2, \alpha_3,\ldots,\alpha_{k+1}, \alpha_{k+1}, \ldots, \alpha_{2}, \alpha_1)$ is a $C_{2k+1}$-automorphism of $\Gamma$. Since $\aut_{C_{2k+1}}(\Gamma)$ is a group, $(\alpha^{-1}_2,\alpha^{-1}_{3}, \ldots,\alpha^{-1}_{k+1},\alpha^{-1}_{k},\ldots,\alpha^{-1}_1)$ is also a $C_{2k+1}$-automorphism of $\Gamma$. Hence
$$
(\alpha_1\alpha^{-1}_2, \alpha_2\alpha^{-1}_{3}, \ldots, \id,\alpha_{k+1}\alpha^{-1}_{k}, \alpha_{2}\alpha^{-1}_2)
$$
is a $C_{2k+1}$-automorphism of $\Gamma$. Again, by Lemma \ref{re:re1.1}, this $C_{2k+1}$-automorphism of $\Gamma$ must be diagonal. That is, $\alpha_1=\alpha_{2}=\cdots=\alpha_{2k+1}$ and so $(\alpha_1,\ldots,\alpha_{2k+1})$ is diagonal, a contradiction. Combining this with Lemma \ref{le:ler3.6}(a), we conclude that $(\Gamma, C_{2k+1})$ is stable. \qed
\end{Proof}

\smallskip
\begin{Proof}\textit{\ref{thm:coines}}.
Set $V(C_{2k+1})=\{0,1,\ldots,2k\}$. Consider an arbitrary $(\alpha_1, \alpha_2, \ldots, \alpha_{2k+1}) \in \aut_{C_{2k+1}}(\Gamma)$. Similarly to the proof of Theorem \ref{th:th3.4}, we can show that there exists an automorphism $\sigma$ of $C_{2k+1}$ such that $i^\sigma=2k+1-i\pmod{2k+1}$ for any $i\in V(C_{2k+1})$. Thus, by Lemma \ref{le:le1.1}, $(\alpha_1, \alpha_2, \ldots,\alpha_{2k+1})^\sigma=(\alpha_{1},\alpha_{2k+1},\ldots, \alpha_{k+2}, \alpha_{k+1}, \ldots,\alpha_{2}) \in \aut_{C_{2k+1}}(\Gamma)$. Since $\aut_{C_{2k+1}}(\Gamma)$ is a group, it follows that $(\alpha_{1},\alpha_{2k+1},\ldots, \alpha_{k+2}, \alpha_{k+1}, \ldots,\alpha_{2})^{-1} = (\alpha^{-1}_1,\alpha^{-1}_{2k+1}, \ldots, \alpha^{-1}_{k+2}, \alpha^{-1}_{k+1},\ldots,\alpha^{-1}_2) \in \aut_{C_{2k+1}}(\Gamma)$. Hence $(\alpha_1, \alpha_2, \ldots, \alpha_{2k+1})(\alpha^{-1}_1,\alpha^{-1}_{2k+1}, \ldots, \alpha^{-1}_{k+2}, \alpha^{-1}_{k+1},\ldots,\alpha^{-1}_2) \in \aut_{C_{2k+1}}(\Gamma)$. That is,
$$
(\id,\alpha_2\alpha^{-1}_{2k+1}, \ldots, \alpha_{k+1}\alpha^{-1}_{k+2}, \alpha_{k+2}\alpha^{-1}_{k+1},\ldots,\alpha_{2k+1}\alpha^{-1}_2) \in \aut_{C_{2k+1}}(\Gamma).
$$
Since $\Gamma$ is $R$-thin, by Lemma \ref{re:re1.1}, this $C_{2k+1}$-automorphism of $\Gamma$ must be diagonal. That is, $\alpha_i = \alpha_{2k+3-i}$ for $2 \le i \le 2k+1$. So $(\alpha_1,\alpha_2,\ldots,\alpha_{2k+1})=(\alpha_1, \alpha_2,\ldots,\alpha_{k+1},\alpha_{k+1},\ldots,\alpha_2)$. Let $\phi$ be the automorphism of $C_{2k+1}$ which maps $i$ to $i-1\pmod{2k+1}$ for each $i \in V(C_{2k+1})$. By Lemma \ref{le:le1.1}, $(\alpha_1, \alpha_2,\ldots,\alpha_{k+1},\alpha_{k+1},\ldots,\alpha_2)^\phi=(\alpha_2, \alpha_3,\ldots,\alpha_{k+1}, \alpha_{k+1}, \ldots, \alpha_{2}, \alpha_1) \in \aut_{C_{2k+1}}(\Gamma)$. Hence $(\alpha_1, \alpha_2,\ldots,\alpha_{k+1},\alpha_{k+1},\ldots,\alpha_2) (\alpha^{-1}_2,\alpha^{-1}_{3}, \ldots,\alpha^{-1}_{k+1},\alpha^{-1}_{k},\ldots,\alpha^{-1}_1) \in \aut_{C_{2k+1}}(\Gamma)$. That is,
$$
(\alpha_1\alpha^{-1}_2, \alpha_2\alpha^{-1}_{3}, \ldots, \id,\alpha_{k+1}\alpha^{-1}_{k}, \alpha_{2}\alpha^{-1}_1) \in \aut_{C_{2k+1}}(\Gamma).
$$
Since $\Gamma$ is $R$-thin, by Lemma \ref{re:re1.1} again, this $C_{2k+1}$-automorphism of $\Gamma$ must be diagonal, which implies that $\alpha_1=\alpha_{2}=\cdots=\alpha_{2k+1}$. So $(\alpha_1, \alpha_2, \ldots,\alpha_{2k+1})$ is diagonal. By the arbitrariness of $(\alpha_1, \alpha_2, \ldots,\alpha_{2k+1}) \in \aut_{C_{2k+1}}(\Gamma)$, we obtain that all $C_{2k+1}$-automorphisms of $\Gamma$ are diagonal. By part (b) of Theorem \ref{th:thr3.7}, we conclude that $(\Gamma, C_{2k+1})$ is stable.
\qed
\end{Proof}

\section{Remarks and conjectures}
\label{sec:con-rem}

In part (a) of Theorem \ref{th:thr3.7} we proved that for an $R$-thin connected graph $\Gamma$ and an integer $n \ge 3$ with $n \neq 4$ such that at least one of $\Gamma$ and $C_n$ is non-bipartite, if $n$ is compatible with $\Gamma$, then $(\Gamma, C_n)$ is nontrivially unstable if and only if at least one $C_n$-automorphism of $\Gamma$ is nondiagonal. In the case when $n$ is incompatible with $\Gamma$, we expect that $(\Gamma, C_n)$ should be unstable.

\begin{conj}
\label{conj:1}
Let $\Gamma$ be an $R$-thin connected graph and $n \ge 3$ an integer with $n \neq 4$ such that at least one of $\Gamma$ and $C_n$ is non-bipartite. If $n$ is incompatible with $\Gamma$, then $(\Gamma, C_n)$ is unstable.
\end{conj}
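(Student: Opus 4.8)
The plan is to prove the statement by exhibiting, for each $n$ incompatible with $\Gamma$, an automorphism of $\Gamma\times C_n$ outside $\mathrm{R}(\Gamma,C_n)$; since $\mathrm{R}(\Gamma,C_n)\cong\aut(\Gamma)\times\aut(C_n)$ is finite, the existence of one such element forces $|\aut(\Gamma\times C_n)|>|\aut(\Gamma)\times\aut(C_n)|$, so the two groups cannot be isomorphic and $(\Gamma,C_n)$ is unstable. First I would dispose of the case where $\Gamma$ and $C_n$ are not coprime: in that case $(\Gamma,C_n)$ is unstable by \cite[Theorem 1.3]{qin211}. So assume from now on that $\Gamma$ and $C_n$ are coprime; since $n\neq 4$, both $\Gamma$ and $C_n$ are then $R$-thin connected graphs at least one of which is non-bipartite, and by \cite[Theorem 1.3]{qin211} again proving $(\Gamma,C_n)$ unstable is the same as proving it nontrivially unstable. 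By Lemma \ref{le:ler3.6}(\ref{it:it3.6.1}), if one can show $\autgc=\pgc$ then it suffices to produce a nondiagonal $C_n$-automorphism of $\Gamma$; if instead $\autgc\neq\pgc$ then, since $\mathrm{R}(\Gamma,C_n)\leq\pgc$, any $\sigma\in\autgc\setminus\pgc$ already lies outside $\mathrm{R}(\Gamma,C_n)$. Thus in all cases the problem reduces to an explicit construction driven by incompatibility.

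The construction should reverse the mechanism used in the proof of Lemma \ref{le:ler3.3}(\ref{it:it3.3.1}). Set $m=n$ if $n$ is odd and $m=n/2$ if $n$ is even; incompatibility means that every vertex of $\Gamma$ lies on a cycle $u_0\sim u_1\sim\cdots\sim u_{m-1}\sim u_0$ of length $m$ in $\Gamma^\ast$, where by Definition \ref{defn:comp} all the $u_r$ share a common degree $d$ and $|N_\Gamma(u_r)\cap N_\Gamma(u_{r+1})|=d/2$. In the proof of Lemma \ref{le:ler3.3}(\ref{it:it3.3.1}) such a configuration was shown to arise from an automorphism $\sigma$ of $\Gamma\times C_n$ collapsing fibres two apart (one with $(w,i)^{\sigma\pi_2}=(w,i+2)^{\sigma\pi_2}$). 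The idea is to build such a $\sigma$ from the $\Gamma^\ast$-cycles: $\sigma$ should rotate the ``position along the $\Gamma^\ast$-cycle'' in lockstep with a shift of the $C_n$-coordinate by $2$, so that on the relevant part of $\Gamma\times C_n$ it mimics the extra ``diagonal rotation'' automorphism that $\Sigma\times C_n$ carries whenever $\Gamma=\Sigma\times C_n$ (a case which, by Remark \ref{rem:def}(vi), is itself always incompatible). When $n\equiv 2\pmod 4$, so $C_n=C_{n/2}\times K_2$, it may be cleaner first to reduce to proving that $(\Gamma,K_2)$ or $(\Gamma,C_{n/2})$ is unstable and then invoke parts (a) and (c) of Theorem \ref{co:co3.3}.

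The main obstacle — and the reason this remains a conjecture — is that the sets $W_r=N_\Gamma(u_r)\cap N_\Gamma(u_{r+1})$ need not decompose the neighbourhoods as $N_\Gamma(u_r)=W_{r-1}\sqcup W_r$: a neighbour of $u_r$ may fail to be adjacent to either $u_{r-1}$ or $u_{r+1}$, forcing $W_{r-1}\cap W_r\neq\emptyset$, and several $\Gamma^\ast$-cycles may pass through a common vertex and interact. Hence the candidate permutation $\sigma$ cannot be read off one $\Gamma^\ast$-cycle in isolation; it must be defined coherently on all of $V(\Gamma\times C_n)$ and then checked to preserve \emph{every} edge of $\Gamma\times C_n$, not just those ``visible'' along a single chosen cycle. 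I expect controlling this globally to require a prior structural analysis of $\Gamma^\ast$ — for instance, showing that incompatibility forces a suitable Boolean-square homomorphism from $\Gamma$ onto $C_m$ or onto an $R$-thin quotient carrying a $C_m$-action — with the bipartite case (which forces $n$ odd) and the case of disconnected $\Gamma^\ast$ treated separately.
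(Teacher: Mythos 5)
The statement you set out to prove is stated in the paper only as Conjecture \ref{conj:1}; the paper offers no proof of it, just the supporting Example \ref{ex:comp}, where explicit unexpected automorphisms of $\Gamma\times C_3$ and $\Gamma\times C_6$ are written down for one particular six-vertex graph. So there is no proof in the paper to compare against, and your proposal, by your own admission in its final paragraph, is not a proof either. The reductions in your first paragraph are sound: disposing of the non-coprime case via \cite[Theorem 1.3]{qin211}; noting that, given coprimality, unstable and nontrivially unstable coincide under the stated hypotheses; and observing that any element of $\aut(\Gamma\times C_n)\setminus\mathrm{P}(\Gamma,C_n)$ already witnesses instability because $\mathrm{R}(\Gamma,C_n)\le \mathrm{P}(\Gamma,C_n)$ and all groups involved are finite. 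Your reading of incompatibility is also correct: since $m=n$ in the odd case and $m=n/2\ge 3$ in the even case (as $n\neq 4$), the condition that $m\in L(u)$ for every $u$ forces every vertex of $\Gamma$ to lie on an $m$-cycle of $\Gamma^*$.

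The genuine gap is exactly where you locate it, and it is the entire difficulty of the conjecture. Lemma \ref{le:ler3.3}(\ref{it:it3.3.1}) shows that an automorphism collapsing fibres two apart \emph{produces} an $m$-cycle in $\Gamma^*$; the conjecture requires the converse, namely that a family of $m$-cycles of $\Gamma^*$ covering $V(\Gamma)$ can be assembled into a single permutation of $V(\Gamma\times C_n)$ preserving \emph{all} edges. The numerical condition $\deg(u_r)=2|N_\Gamma(u_r)\cap N_\Gamma(u_{r+1})|$ does not yield the set-theoretic splitting $N_\Gamma(u_r)=W_{r-1}\sqcup W_r$ that your ``rotation along the $\Gamma^*$-cycle'' would need; distinct $\Gamma^*$-cycles through a common vertex need not interact coherently; and nothing guarantees that local choices can be made consistently across all of $\Gamma$. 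The structural result you would need (for instance, your proposed Boolean-square homomorphism onto $C_m$, or a quotient carrying a $C_m$-action) is established neither in the paper nor in your argument, and the reduction via Theorem \ref{co:co3.3} in the case $n\equiv 2\pmod 4$ still leaves you having to prove that $(\Gamma,K_2)$ or $(\Gamma,C_{n/2})$ is unstable, which is not done. So the proposal should be read as a reasonable plan of attack on an open problem rather than a proof; a sensible next step would be to carry your construction out explicitly for the graph of Example \ref{ex:comp} and identify which special features of that graph the verification actually uses.
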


The following example gives two unstable pairs $(\Gamma, C_n)$ where $n$ is incompatible with $\Gamma$.

\begin{exam}
\label{ex:comp}
The graph $\Gamma$ on the left-hand side of Figure \ref{fig:gammastar} is an $R$-thin connected non-bipartite graph. As mentioned in Example \ref{ex:exin}, both $3$ and $6$ are incompatible with $\Gamma$. We claim that both $(\Gamma, C_3)$ and $(\Gamma, C_6)$ are unstable.

In fact, consider the $3$-cycle $C_3$ with vertex set $V(C_3) = \{0,1,2\}$. Let $\sigma$ be the permutation on $V(\Gamma\times C_3)$ defined as follows:
 \begin{align*}
 &(u_1,0)^\sigma=(u_1,1), (u_2,0)^\sigma=(u_4,2), (u_3,0)^\sigma=(u_1,0), \\
 &(u_4,0)^\sigma=(u_4,1), (u_5,0)^\sigma=(u_1,2), (u_6,0)^\sigma=(u_4,0), \\
 &(u_1,1)^\sigma=(u_3,1), (u_2,1)^\sigma=(u_6,2), (u_3,1)^\sigma=(u_3,0), \\
 &(u_4,1)^\sigma=(u_6,1), (u_5,1)^\sigma=(u_3,2), (u_6,1)^\sigma=(u_6,0), \\
 &(u_1,2)^\sigma=(u_5,1), (u_2,2)^\sigma=(u_2,2), (u_3,2)^\sigma=(u_5,0), \\
 & (u_4,2)^\sigma=(u_2,1), (u_5,2)^\sigma=(u_5,2), (u_6,2)^\sigma=(u_2,0).
 \end{align*}
It can be verified that $\sigma \in \aut(\Gamma\times C_3) \setminus \mathrm{P}(\Gamma, C_3)$. Hence $(\Gamma, C_3)$ is unstable.

Similarly, for the $6$-cycle $C_6$ with vertex set $V(C_6) = \{0,1,2,3,4,5\}$, define $\tau$ as follows:
\begin{align*}
 &(u_1,0)^\tau=(u_1,1),\quad (u_2,0)^\tau=(u_4,5),\quad (u_3,0)^\tau=(u_1,3),\quad (u_4,0)^\tau=(u_4,1),\\
 &(u_5,0)^\tau=(u_1,5),\quad (u_6,0)^\tau=(u_4,3),\quad
(u_1,1)^\tau=(u_5,4),\quad (u_2,1)^\tau=(u_2,2),\\
 &(u_3,1)^\tau=(u_5,0),\quad(u_4,1)^\tau=(u_2,4),\quad (u_5,1)^\tau=(u_5,2),\quad (u_6,1)^\tau=(u_2,0), \\
 &(u_1,2)^\tau=(u_3,1),\quad (u_2,2)^\tau=(u_6,5),\quad (u_3,2)^\tau=(u_3,3),\quad (u_4,2)^\tau=(u_6,1),\\
 &(u_5,2)^\tau=(u_3,5),\quad (u_6,2)^\tau=(u_6,3),\quad
 (u_1,3)^\tau=(u_1,4),\quad (u_2,3)^\tau=(u_4,2),\\
 &(u_3,3)^\tau=(u_1,0),\quad (u_4,3)^\tau=(u_4,4),\quad
 (u_5,3)^\tau=(u_1,2),\quad (u_6,3)^\tau=(u_4,0),\\
&(u_1,4)^\tau=(u_5,1),\quad (u_2,4)^\tau=(u_2,5),\quad
(u_3,4)^\tau=(u_5,3),\quad(u_4,4)^\tau=(u_2,1),\\
&(u_5,4)^\tau=(u_5,5),\quad (u_6,4)^\tau=(u_2,3),\quad
(u_1,5)^\tau=(u_3,4),\quad (u_2,5)^\tau=(u_6,2),\\
&(u_3,5)^\tau=(u_3,0),\quad (u_4,5)^\tau=(u_6,4),\quad
(u_5,5)^\tau=(u_3,2),\quad (u_6,5)^\tau=(u_6,0).
\end{align*}
It can be verified that $\tau \in \aut(\Gamma\times C_6) \setminus \mathrm{P}(\Gamma,C_6)$. Thus, $(\Gamma, C_6)$ is unstable.
\end{exam}

Let $\Gamma$ be an $R$-thin connected graph and $n \ge 3$ an odd integer. In view of Theorems \ref{th:th3.4} and \ref{thm:coines}, the stability of $(\Gamma, C_{n})$ is unknown only in the following two cases: (i) $n = 3$ and $3$ is incompatible with $\Gamma$; (ii) $n \ge 5$, $n$ is incompatible with $\Gamma$, and there exists an edge $\{u,v\}\in E(\Gamma^*)$ such that $N_{\Gamma}(u)\cap N_{\Gamma}(v)$ is an independent set of $\Gamma$. As a special case of Conjecture \ref{conj:1}, we expect that $(\Gamma, C_{3})$ is unstable in Case (i). We also expect that $(\Gamma, C_n)$ is unstable in Case (ii), as covered by the following conjecture.

\begin{conj}
\label{conj:2}
Let $\Gamma$ be an $R$-thin connected graph and $n \ge 3$ an odd integer. If there exists an edge $\{u,v\}\in E(\Gamma^*)$ such that $N_{\Gamma}(u)\cap N_{\Gamma}(v)$ is an independent set of $\Gamma$, then $\autgc \setminus \pgc \neq \emptyset$ and hence $(\Gamma, C_n)$ is unstable.
\end{conj}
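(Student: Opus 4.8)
\smallskip
\noindent\emph{A proof proposal for Conjecture~\ref{conj:2}.}
The aim is to construct an automorphism $\sigma$ of $\Gamma\times C_n$ that does not preserve the partition $\{V(\Gamma)\times\{i\}:i\in V(C_n)\}$; any such $\sigma$ lies outside $\pgc$, hence outside $\mathrm{R}(\Gamma,C_n)\cong\aut(\Gamma)\times\aut(C_n)$, so its existence makes $(\Gamma,C_n)$ unstable. Fix the $\Gamma^*$-edge $\{u,v\}$ supplied by the hypothesis: $\deg(u)=\deg(v)=2m$, the set $W:=N_\Gamma(u)\cap N_\Gamma(v)$ is independent, and $|W|=m$; write $N_\Gamma(u)=W\cup A$ and $N_\Gamma(v)=W\cup B$ with $W,A,B$ pairwise disjoint and $|A|=|B|=m$. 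A preliminary observation is that the argument must also invoke the Case~(ii) hypothesis that $n$ is incompatible with $\Gamma$: the statement is false without it, since for $\Gamma=C_5$ and $n=7$ one has $\{0,2\}\in E(\Gamma^*)$ with $N_\Gamma(0)\cap N_\Gamma(2)=\{1\}$ independent while $7$ is compatible with $C_5$, so $(C_5,C_7)$ is stable by Theorem~\ref{th:th3.4}. Assume therefore that $n$ is incompatible with $\Gamma$; then $\Gamma^*$ has minimum degree at least $2$ and every vertex of $\Gamma$ lies on a cycle of length $n$ in $\Gamma^*$.

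The construction I would attempt generalises the explicit unstable pairs of Example~\ref{ex:comp}. Choose a cycle $w_0\sim w_1\sim\cdots\sim w_{n-1}\sim w_0$ of $\Gamma^*$, identify its vertices with $V(C_n)=\{0,\ldots,n-1\}$, and build $\sigma$, starting from the sub-grid $\{w_0,\ldots,w_{n-1}\}\times V(C_n)$, as a ``transpose'' $(w_t,i)\mapsto(w_i,\pi(t))$ for a suitable permutation $\pi$ of $V(C_n)$ — the point being to interchange the ``position along the $\Gamma^*$-cycle'' coordinate with the ``layer'' coordinate, so that a layer $V(\Gamma)\times\{i\}$ gets split and $\sigma\notin\pgc$. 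The reason the hypothesis is stated via $\Gamma^*$ is that the independence of each $N_\Gamma(w_t)\cap N_\Gamma(w_{t+1})$ is precisely what is needed for such a $\sigma$ to preserve edges: running in reverse the computation in the proof of Lemma~\ref{le:ler3.3}(b), the only way a $\sigma$ with $(w_t,i)^{\sigma\pi_2}=(w_t,i+2)^{\sigma\pi_2}$ can avoid contradicting adjacency in $\Gamma\times C_n$ is that no two vertices of $N_\Gamma(w_t)\cap N_\Gamma(w_{t+1})$ are adjacent. One would then extend $\sigma$ to the remaining vertices of $V(\Gamma)\times V(C_n)$ using connectedness of $\Gamma\times C_n$ and the propagation statements Lemma~\ref{le:ler3.2}(b),(d),(e), and finish by observing that $\sigma$ splits the layer through $(w_0,0)$, so $\sigma\in\autgc\setminus\pgc$.

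The step I expect to be the genuine obstacle is globalisation. In Example~\ref{ex:comp} the construction works because there $V(\Gamma)$ itself decomposes into $\Gamma^*$-cycles of the relevant length whose consecutive common neighbourhoods are independent and dovetail coherently, and it is not obvious that a single bad $\Gamma^*$-edge, even together with incompatibility, forces enough of that structure. A more robust plan is: (1) from incompatibility and the independence hypothesis, extract within the ``collapsing'' part of $\Gamma$ a family of length-$n$ cycles of $\Gamma^*$ along which all consecutive common neighbourhoods are independent; (2) carry out the transpose construction on this family, transporting the rest of $\Gamma$ along even walks as in Lemma~\ref{le:ler3.2}(e); (3) verify $\sigma\in\autgc$ by checking edge-preservation column by column, the independence of the common neighbourhoods being the crucial input at each incidence; and (4) observe directly that $\sigma\notin\pgc$. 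Step~(1) is where I expect the real difficulty, and it is conceivable that the statement needs a hypothesis slightly stronger than a single bad $\Gamma^*$-edge — for instance that some $\Gamma^*$-cycle witnessing the incompatibility has all of its consecutive common neighbourhoods independent.
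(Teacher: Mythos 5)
The statement you are proving is Conjecture~\ref{conj:2}; the paper offers no proof of it, so there is nothing to compare your argument against. Your preliminary observation is, however, correct and worth stating plainly: as literally written the conjecture is false. For $\Gamma=C_5$ and $n=7$ one has $\{0,2\}\in E(C_5^*)$ with $N_{C_5}(0)\cap N_{C_5}(2)=\{1\}$ independent, yet $L(u)=\{5\}$ for every vertex, so $7$ is compatible with $C_5$ and $(C_5,C_7)$ is stable by Theorem~\ref{th:th3.4} (or directly by Lemma~\ref{le:ler3.6}(\ref{it:it3.6.2}), since $C_5$ and $C_7$ are coprime $R$-thin connected non-bipartite graphs). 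The conjecture must therefore be read with the hypotheses of Case~(ii) of the surrounding discussion, in particular that $n$ is incompatible with $\Gamma$; you are right to add that hypothesis before attempting anything.

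As a proof, though, your proposal has two genuine gaps beyond the one you flag. First, step~(1) --- extracting from a single bad $\Gamma^*$-edge plus incompatibility a length-$n$ cycle of $\Gamma^*$ all of whose consecutive common neighbourhoods are independent, arranged so that the transpose map is well defined and injective --- is exactly the content of the conjecture in disguise, and you give no argument for it; your own suggestion that the hypothesis may need strengthening concedes as much. Second, the proposed use of Lemma~\ref{le:ler3.2}(b),(d),(e) to ``extend $\sigma$ to the remaining vertices'' is backwards: those statements are necessary conditions satisfied by a permutation already known to lie in $\aut(\Gamma\times C_n)$, not a mechanism for producing one. To exhibit $\sigma\in\autgc\setminus\pgc$ you must define $\sigma$ on all of $V(\Gamma)\times V(C_n)$ and verify adjacency preservation for every edge, including edges with at most one endpoint in your chosen sub-grid; that global verification is where the real work lies, it depends on structure of $\Gamma$ outside the $\Gamma^*$-cycle that your hypotheses do not control, and it is not addressed. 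In short, you have a plausible heuristic modelled on Example~\ref{ex:comp} together with a correct and useful repair of the statement, but not a proof.
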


In the case when $n \ge 4$ is even, we expect that the following should be true.

\begin{conj}
Let $\Gamma$ be an $R$-thin connected graph and $n\geq 4$ an even integer. If $n$ is incompatible with $\Gamma$, then $\autgc \setminus \pgc \neq \emptyset$ and hence $(\Gamma, C_n)$ is unstable.
\end{conj}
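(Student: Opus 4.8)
The plan is to exhibit explicitly an element $\sigma \in \autgc \setminus \pgc$; once this is in hand the pair $(\Gamma, C_n)$ is automatically unstable, because $\pgc$ contains $\mathrm{R}(\Gamma, C_n) \cong \aut(\Gamma)\times\aut(C_n)$, so $\autgc \supsetneq \pgc \supseteq \mathrm{R}(\Gamma, C_n)$ forces $\autgc \not\cong \aut(\Gamma)\times\aut(C_n)$. First I would unpack the hypothesis: since $n$ is even and incompatible with $\Gamma$, we have $n/2 \in L(u)$ for every $u \in V(\Gamma)$; by Remark~\ref{rem:def}(v) this forces $\Gamma^*$ to have minimum degree at least $2$, and in fact every vertex of $\Gamma$ lies on a cycle of length exactly $m := n/2$ in $\Gamma^*$. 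Recall that an edge $\{u,v\}$ of $\Gamma^*$ records the relation $\deg(u)=\deg(v)=2|\nga(u)\cap\nga(v)|$, i.e.\ $\nga(u)$ and $\nga(v)$ agree on exactly half of their common size; this neighbourhood-halving is precisely the feature whose \emph{absence} along edges of $\bfc$ forced $\autgc=\pgc$ in Lemma~\ref{le:ler3.3}, so the conjecture is, morally, the converse of that lemma.

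The core of the argument is to run that implication backwards: from the $m$-cycles of $\Gamma^*$ one should manufacture an automorphism of $\Gamma\times C_n$ that is ``diagonal along edges of $\bfc$'' yet does not preserve fibres. Writing $V(C_n)=\{0,1,\dots,n-1\}$, so that $\bfc$ is the disjoint union of the even cycle $(0,2,\dots,n-2)$ and the odd cycle $(1,3,\dots,n-1)$, I would look for $\sigma$ with $(w,i)^{\sigma\pi_2}=(w,i+2)^{\sigma\pi_2}$ for all $w,i$ but $(v,i)^{\sigma\pi_2}\neq(w,i)^{\sigma\pi_2}$ for some $v,w$ and some $i$ (so $\sigma\notin\pgc$ by definition). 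Such a $\sigma$ must encode the index $j$ of the fibre $V(\Gamma)\times\{2j\}$ into the $\Gamma$-coordinate of its image, and the $m$-cycles of $\Gamma^*$ are the natural device for this: along an $m$-cycle $u_0,u_1,\dots,u_{m-1}$ of $\Gamma^*$ one wants, schematically, $(u_r,2j)\mapsto(u_{r+j},\ast)$ and $(u_r,2j+1)\mapsto(u_{r+j},\ast)$ with the $C_n$-coordinate of the image recovered from the $\bfc$-component (even or odd) together with a fixed shift. Checking $\sigma\in\autgc$ then amounts to verifying that every edge $\{(u_r,2j),(x,2j+1)\}$ with $x\in\nga(u_r)$ maps to an edge; here the neighbourhood-halving property should supply a decomposition of each $\nga(u_r)$ indexed by the cyclic structure of $\Gamma^*$ around $u_r$ that makes this work, essentially reversing the cardinality count performed in the proof of part~(a) of Lemma~\ref{le:ler3.3}. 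The two verifications $\sigma\in\autgc$ and $\sigma\notin\pgc$ are then, respectively, a long but mechanical edge check and an immediate consequence of the index-encoding.

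The step I expect to be the real obstacle is the coherent organisation of the $m$-cycles of $\Gamma^*$. The hypothesis provides, for each vertex, \emph{some} $m$-cycle of $\Gamma^*$ through it, but these cycles may overlap unpredictably, whereas the construction above really wants $\Gamma$ to carry, with respect to the $\Gamma^*$-relation, a consistent $\mathbb{Z}_m$-structure — roughly, a graph-bundle decomposition over $C_m$, or a fixed-point-free order-$m$ symmetry refining $\Gamma^*$. Establishing this would require a genuine structure theorem for graphs $\Gamma$ with $n/2\in L(u)$ for all $u$, obtained by analysing how two $m$-cycles of $\Gamma^*$ that share a vertex or an edge must interact under the degree constraints and showing they can be synchronised. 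If no such clean structure theorem is available, one could instead attempt to build $\sigma$ only on the union of a single $\Gamma^*$-component containing a chosen $m$-cycle and extend it by the identity elsewhere, checking that edges of $\Gamma\times C_n$ running between distinct $\Gamma^*$-components are still respected because such edges are themselves constrained by the degree conditions. A further wrinkle is the split into the case where $\Gamma$ is bipartite (so $\Gamma\times C_n$ is disconnected, and the construction may have to be carried out componentwise) and the case where it is not (where $\Gamma\times C_n$ is connected and a single $\sigma$ as above should suffice). Checking the construction against the explicit maps in Example~\ref{ex:comp} would be a useful sanity test before committing to the structure theorem.
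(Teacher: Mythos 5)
You should be aware at the outset that the paper does not prove this statement: it is posed as the third (unlabelled) conjecture in Section~\ref{sec:con-rem}, and the only evidence the authors offer is Example~\ref{ex:comp}, where an explicit $\tau \in \aut(\Gamma\times C_6)\setminus \mathrm{P}(\Gamma,C_6)$ is written down for the single graph of Figure~\ref{fig:gammastar}. So there is no ``paper proof'' to match your attempt against; the question is only whether your proposal closes the problem, and it does not. Your preliminary reductions are sound (incompatibility of an even $n$ does give $n/2\in L(u)$ for every $u$, hence minimum degree at least $2$ in $\Gamma^*$ by Remark~\ref{rem:def}(v), and exhibiting one element of $\autgc\setminus\pgc$ does force instability since $\pgc$ contains a copy of $\aut(\Gamma)\times\aut(C_n)$). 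But the heart of the argument --- manufacturing the fibre-mixing automorphism $\sigma$ from the $(n/2)$-cycles of $\Gamma^*$ --- is exactly the step you defer to an unproved ``structure theorem,'' and that deferral is the whole difficulty, not a technical wrinkle.

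Concretely, the reversal of Lemma~\ref{le:ler3.3}(\ref{it:it3.3.1}) fails because that lemma extracts from a hypothetical $\sigma$ only \emph{cardinality} identities $|\nga(u_r)|=2|\nga(u_r)\cap\nga(u_{r+1})|$ along a closed walk in $\Gamma^*$; these are necessary consequences of $\sigma$ being an automorphism, but an automorphism must respect the actual adjacency relation of $\Gamma\times C_n$, not just sizes of common neighbourhoods. Your schematic assignment $(u_r,2j)\mapsto(u_{r+j},\ast)$ already presupposes a fixed-point-free shift of order $n/2$ acting coherently on all of $V(\Gamma)$ and compatible with $E(\Gamma)$ --- essentially that $\Gamma$ fibres over $C_{n/2}$ in the sense of Remark~\ref{rem:def}(vi) --- whereas the hypothesis only supplies, for each vertex separately, \emph{some} cycle of the right length in the auxiliary graph $\Gamma^*$, with no control over how these cycles overlap or whether their ``halving'' partitions of neighbourhoods can be synchronised. (Note also that adjacency in $\Gamma^*$ does not even imply adjacency or non-adjacency in $\Gamma$, so the cycles of $\Gamma^*$ carry no direct edge information about $\Gamma$.) Until that synchronisation is established --- and the paper gives no tool for it --- the edge check you call ``long but mechanical'' cannot even be set up, and the statement remains open.
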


Let $\Gamma$ be the graph on the left-hand side of Figure \ref{fig:gammastar}. Then $(\Gamma, K_2)$ is stable. (In fact, for any $\sigma \in \aut(\Gamma\times K_2)$, $(u_i, 0)^{\sigma\pi_1}=(u_i,1)^{\sigma\pi_1}$ for $1 \le i \le 6$, where $V(K_2)=\{0,1\}$. So all automorphisms of $\Gamma\times K_2$ are expected automorphisms \cite{wilson08} and therefore $(\Gamma, K_2)$ is stable.) On the other hand, $6$ is incompatible with $\Gamma$ (Example \ref{ex:exin}) and $(\Gamma, C_6)$ is unstable (Example \ref{ex:comp}). So the ``only if" part of part (b) in Theorem \ref{co:co3.3} is not true if $2k$ is incompatible with $\Gamma$. Also, since $3$ is incompatible with $\Gamma$ (Example \ref{ex:exin}) and $(\Gamma, C_3)$ is unstable (Example \ref{ex:comp}), the result in Theorem \ref{th:th3.4} is not true if $2k+1$ is incompatible with $\Gamma$.

\vskip 0.2 cm
\noindent{\bf Acknowledgements}~~ We appreciate the referee for his/her helpful comments. Wang and Xu were supported by the National Natural Science Foundation of China (Grants No.12071194, 12401453), the China Postdoctoral Science Foundation (Grant No. 2024M751251) and the Postdoctoral Fellowship Program of CPSF (Grant No. GZC20240626). Zhou was supported by ARC Discovery Project DP250104965.
\vskip 0.2 cm

\noindent{\bf Data Availability Statement}~~
Data sharing is not applicable to this article, as no data sets were generated or
analyzed during the preparation of this manuscript.

\vskip 0.2 cm
\noindent{\bf Conflict of Interest}~~
The authors declare that they have no conflicts of interest.

\end{document}